\newcommand{\R}{\mathbb{R}}
\newcommand{\RD}{\mathbb{R}^d}
\newcommand{\N}{\mathbb{N}}
\newcommand{\cF}{\mathcal{F}}
\newcommand{\cE}{\mathcal{E}}
\newcommand{\cH}{\mathcal{H}}
\newcommand{\cP}{\mathcal{P}}
\newcommand{\scrA}{\mathscr{A}}
\newcommand{\bes}{\begin{equation*}}
\newcommand{\ees}{\end{equation*}}
\newcommand{\beas}{\begin{eqnarray*}}
\newcommand{\eeas}{\end{eqnarray*}}
\newcommand{\bea}{\begin{eqnarray}}
\newcommand{\eea}{\end{eqnarray}}
\newcommand{\be}{\begin{equation}}
\newcommand{\ee}{\end{equation}}
\newcommand{\bei}{\begin{itemize}}
\newcommand{\eei}{\end{itemize}}
\newcommand{\bec}{\begin{cases}}
\newcommand{\eec}{\end{cases}}
\newcommand{\ben}{\begin{enumerate}}
\newcommand{\een}{\end{enumerate}}
\newcommand{\ve}{\varepsilon}
\newcommand{\bbE}{\mathbb{E}}
\newcommand{\bbl}{\begin{block}}
\newcommand{\ebl}{\end{block}}
\newcommand{\De}{\mathrm{d}}
\newcommand{\rmP}{\mathrm{P}}
\newcommand{\rmQ}{\mathrm{Q}}
\newtheorem{prop}{Proposition}[section]
\newtheorem{theorem}[prop]{Theorem}
\newtheorem{lemma}[prop]{Lemma}
\newtheorem{remark}[prop]{Remark}
\newtheorem{cor}[prop]{Corollary}
\newcommand{\cost}{\mathscr{C}_T}
\newcommand{\mfcost}{\mathscr{C}^{mf}_T}
\newcommand{\ent}{\cH}
\newcommand{\mm}{\mathfrak{m}}
\newcommand{\cons}{\mathscr{E}_T}
\newcommand{\CD}{{\sf CD}}
\newcommand{\hp}{{\sf p}}
\newcommand{\sfd}{{\sf d}}
\newcommand{\supp}{\mathop{\rm supp}\nolimits} 
\newcommand{\leb}{\lambda} 
\newcommand{\sfL}{{\sf L}}
\newcommand{\sfP}{{\sf P}}
\newcommand{\sfQ}{{\sf Q}}
\newcommand{\fr}{\penalty-20\null\hfill$\blacksquare$}
\numberwithin{equation}{section}
\title{A formula for the time derivative of the entropic cost and applications}
\author{Giovanni Conforti \thanks{D\'epartement de Math\'ematiques Appliqu\'ees, \'Ecole Polytechnique, Route de Saclay, 91128 Palaiseau Cedex, France. email: giovanni.conforti@polytechnique.edu} \quad Luca Tamanini \thanks{CEREMADE (UMR CNRS 7534), Universit\'e Paris Dauphine PSL, Place du Mar\'echal de Lattre de Tassigny, 75775 Paris Cedex 16, France and INRIA-Paris, MOKAPLAN, 2 Rue Simone Iff, 75012, Paris, France. email: tamanini@ceremade.dauphine.fr}}
\begin{document}

\maketitle

\begin{abstract}
In the recent years the Schr\"odinger problem has gained a lot of attention because of the connection, in the small-noise regime, with the Monge-Kantorovich optimal transport problem. Its optimal value, the \emph{entropic cost} $\cost$, is here deeply investigated. In this paper we study the regularity of $\cost$ with respect to the parameter $T$ under a curvature condition and explicitly compute its first and second derivative. As applications:
\begin{itemize}
\item[-] we determine the large-time limit of $\cost$ and provide sharp exponential convergence rates; we obtain this result not only for the classical Schr\"odinger problem but also for the recently introduced Mean Field Schr\"odinger problem \cite{backhoff2019mean};
\item[-] we improve the Taylor expansion of $T \mapsto T\cost$ around $T=0$ from the first to the second order.
\end{itemize}
\end{abstract}

\tableofcontents

\section{Introduction and statement of the main results}\label{sec:stat main res}

The entropic transportation cost is the optimal value in a probabilistic version of the Monge-Kantorovich optimal transport problem, the Schr\"odinger problem, whose study has already shown to have far reaching consequences in various fields, ranging from statistical machine learning to functional inequalities. The goal of the present article is to advance in the study of the entropic cost as a function of the time (regularization) parameter $T$ (see Definition \ref{cost def} below). Following Mikami's contribution \cite{Mikami04} linking the Schr\"odinger problem to optimal transport, several results have been obtained in the last years concerning the behavior of the entropic cost in the short-time (small noise) limit $T\rightarrow 0$. In particular, as a byproduct of the research line originated in \cite{Adams2011} another step forward was made with the computation of the first derivative of the rescaled entropic cost at $T=0$, see also \cite{erbar2015from}, the recent works \cite{pal2019difference,di2019optimal} and references therein. However, very few results beyond the short-time limit have been obtained. In particular, very little is known about the long-time regime $T \rightarrow \infty$, where the entropic cost is expected to converge to the sum of the marginal entropies. This lack of knowledge was one of the main motivations for our work and in this respect, our contribution includes:
\begin{itemize}
\item A formula for the first and second derivative of the (rescaled) entropic cost for a general value of $T$ in terms of the so-called ``energy" (defined at \eqref{eq:constant quantity} below).
\item A rigorous identification of the large-time limit of the entropic cost as the sum of the marginal entropies.
\item Sharp exponential convergence rates under a curvature condition in the long-time regime. We obtain this result not only for the classical Schr\"odinger problem but also for the recently introduced Mean Field Schr\"odinger problem \cite{backhoff2019mean}.
\end{itemize}
We also establish some results in the short-time limit. Their interest resides in the fact that they allow to obtain a clean expression of the second derivative of the rescaled entropic cost that, to the best of our knowledge, was not known before. However, the regularity assumptions we impose on the marginals are not the weakest ones. For this part, our contribution can be resumed as follows:
\begin{itemize}
\item A formula for the second derivative of the rescaled cost around $T=0$ that yields  the local convexity of the cost in the time variable.
\item A non-asymptotic sharp quantitative bound for the convergence to the Wasserstein distance depending only on the integral of the Fisher information functional along the corresponding Wasserstein geodesic.
\end{itemize}

\subsection{The Schr\"odinger problem}

We consider a triplet $(M,\sfd_g,\mm)$, where $M$ is a smooth, connected and complete Riemannian manifold without boundary and with metric tensor $g$, $\sfd_g$ is the induced distance and $\mm$ is given by $\mm(\De x)= \exp(-2U(x))\mathrm{vol}(\De x)$, where $U$ is a $C^2$ potential satisfying the Bakry-\'Emery $\CD(\kappa,\infty)$ condition, namely
\be
\label{hyp pot}\tag{H1}
U \in C^2(M;\R), \qquad \mathrm{Ric}_g + \mathrm{Hess}(2U) \geq \kappa g \,\, \text{ for some $\kappa\in\R$}. 
\ee
The measure $\mm$ is invariant for the SDE
\begin{equation}
\label{SDE}
\De X_t = -\nabla U(X_t)\De t + \De B_t
\end{equation}
(here $B_t$ denotes a standard Brownian motion) whose joint law at time $0$ and $T$ will be denoted by $R_{0,T}$. Within this framework and given $\mu,\nu \in \cP(M)$ (as usual, for a measurable space $(E,\cE)$ we denote $\cP(E)$ the set of probability measures over $(E,\cE)$), the entropic transportation cost $\cost(\mu,\nu)$ is defined as the optimal value in the corresponding Schr\"odinger problem, namely
\be
\label{cost def}
   \cost(\mu,\nu) = \inf_{\pi \in \Pi(\mu,\nu)} \ent(\pi\,|\,R_{0,T}),  \
\ee
where $\Pi(\mu,\nu) \subset \cP(M \times M)$ is the set of couplings of $\mu$ and $\nu$ and $\cH$ is the relative entropy functional defined for two probability measures $p,q$ on the same (arbitrary) measurable space as
\begin{equation}
\label{eq:entropy}
\cH(p\,|\,q)=\begin{cases}
\displaystyle{\int  \log \Big(\frac{\De p}{\De q }\Big)\De p}, &\quad  \mbox{if $p\ll q$}\\
+\infty, & \quad \text{otherwise}.
\end{cases}
\end{equation}
When $q$ is not a probability, the precise definition is postponed to Section \ref{sec:preliminaries}.

The physical meaning of the variational problem \eqref{cost def} is described in the seminal papers \cite{Schrodinger31,Schrodinger32}, where E.\ Schr\"odinger addressed the problem of finding the most likely evolution of a system of independent random particles driven by \eqref{SDE} conditionally on the observation of their initial and final configuration. With this picture in mind, the short- and long-time behavior of the entropic cost sounds perfectly natural.

\paragraph{Assumptions on $\mm$, $\mu$ and $\nu$.} We state here the additional assumptions we will make either on the reference measure $\mm$ or on the marginals $\mu,\nu$. Concerning the former, we will often assume that
\begin{equation}\label{hyp ref}\tag{H2}
    \mm \text{ is a probability measure}.
\end{equation}
Notice that when $\kappa > 0$, \eqref{hyp ref} automatically holds and in addition $\mm \in \cP_2(M)$ (see for instance \cite[Theorem 4.26]{Sturm06I}). For the latter, we shall suppose that either
\begin{equation}\label{hyp marginals weakest}\tag{H3}
\mu,\nu \in \cP_2(M) \,\textrm{ and }\, \cH(\mu\,|\,\mm),\cH(\nu\,|\,\mm) < +\infty,
\end{equation}
where $\cP_2(M) \subset \cP(M)$ denotes the space of probability measures over $M$ with finite second moment, or, more frequently, that
\be\label{hyp marginals weak}\tag{H4}
\mu = \rho\mm,\,\nu = \sigma\mm \,\textrm{ and }\, \rho,\sigma \in L^\infty(\mm)\,\textrm{ with compact support}.
\ee
Notice that \eqref{hyp marginals weak} is stronger than \eqref{hyp marginals weakest}.

\paragraph{The Benamou-Brenier formulation.} The fluid-dynamical formulation of the entropic cost \cite{ChGePa16,GLR15, GigTam19, Tamanini20} asserts that, at least under \eqref{hyp marginals weak}, we have
\begin{equation}\label{eq:bbs}
\begin{split}
\cost(\mu,\nu) & = \frac12\Big(\ent(\mu\,|\,\mm) + \ent(\nu\,|\,\mm)\Big) + \inf_{(\bar{\rho},\bar{v})}\iint_0^T \Big(\frac12|\bar{v}_t|^2 + \frac18|\nabla\log\bar\rho_t|^2\Big) \rho_t\De t\De\mm \\
& = \frac12\Big(\ent(\mu\,|\,\mm) + \ent(\nu\,|\,\mm)\Big) + \iint_0^T \Big(\frac12|\bar{v}_t^T|^2 + \frac18|\nabla\log\bar\rho_t^T|^2\Big) \bar{\rho}_t^T\De t\De\mm
\end{split}
\end{equation}
where $(\bar{\rho}^T,\bar{v}^T)$ is the unique optimal curve and the infimum runs over all weak solutions $(\bar{\rho},\bar{v})$ of the continuity equation
\begin{equation}
\label{eq:conteq}
\partial_t(\bar{\rho}_t\mm) + {\rm div}_\mm(\bar{v}_t\bar{\rho}_t\mm) = 0,
\end{equation}
satisfying the marginal constraints $\rho_0\mm = \mu$ and $\rho_T\mm = \nu$, namely among all $(\bar{\rho}_t) \subset L^\infty(\mm)$ with $\bar{\rho}_t\mm \in \cP(M)$ and all Borel vector fields $(\bar{v}_t)$ such that: 
\begin{itemize}
\item[(i)] $t \mapsto \bar{\rho}_t\mm$ is weakly continuous and there exists $C>0$ such that $\bar{\rho}_t \leq C$ for all $t \in [0,T]$; 
\item[(ii)] the map $t \mapsto \int |\bar{v}_t|^2\bar{\rho}_t\De\mm$ is Borel and belongs to $L^1(0,T)$; 
\item[(iii)] for any $f \in C_c^\infty(M)$ the map $[0,T] \ni t \mapsto \int f\bar{\rho}_t\,\De\mm$ is absolutely continuous and it holds
\[
\frac{\De}{\De t}\int f\bar{\rho}_t\,\De\mm = \int \De f(\bar{v}_t)\bar{\rho}_t\,\De\mm \qquad {\rm a.e.}\ t.
\]
\end{itemize}
In the above, we denoted by ${\rm div}_\mm$ the divergence w.r.t.\ $\mm$ and by $\De f$ the differential of $f$. 

From a physical point of view, \eqref{eq:bbs} states that the trajectory $(\bar{\rho}_t^T)_{t \in [0,T]}$, also called \emph{entropic interpolation}, is the one minimizing a functional consisting of two terms: the former is purely kinetic, while the latter is given by the Fisher information functional. Hence $(\bar{\rho}_t^T)_{t \in [0,T]}$ is given by the balance between deterministic and chaotic behavior.

\paragraph{The energy.} The quantity
\begin{equation}\label{eq:constant quantity}
\frac12 \int |\bar{v}^T_t|^2 \bar{\rho}^T_t \De \mm - \frac18 \int |\nabla \log \bar{\rho}^T_t|^2  \bar{\rho}^T_t \De \mm
\end{equation}
is conserved along the optimal flow $(\bar{\rho}_t^T)_{t \in [0,T]}$, i.e.\ its value does not depend on $t$, see \cite[Corollary 1.1]{Conforti17} and \cite[Lemma 3.2]{GLRT19} (although in these papers the setting is more restrictive, the argument therein carries over verbatim to the present setting). We call this constant the \emph{energy} and denote it $\cons(\mu,\nu)$. In order to justify the term ``energy'', we remind that the formal Riemannian calculus on $\cP_2(M)$ introduced by Otto \cite{Otto01} allows to draw an analogy between the minimization \eqref{eq:bbs} and the much simpler problem (see \cite{GLR18}) on $\RD$

\be\label{eq:toySchr}
\inf_{\substack{x:[0,T]\rightarrow\RD\\ x_0=x,x_T=y}} \int_{0}^T \frac{1}{2}|\dot{x}_t|^2 + \frac{1}{8}|\nabla f|^2(x_t)\, \De t,
\ee
where the role of the function $f$ in \eqref{eq:toySchr} is played by the relative entropy $\cH(\cdot|\mm)$ in \eqref{eq:bbs}. For \eqref{eq:toySchr} it is well known that along any critical curve the total energy (given by the sum of the kinetic and potential components)
\bes
\frac{1}{2}|\dot{x}_t|^2 - \frac{1}{8}|\nabla f|^2(x_t)
\ees
is conserved. The analogous of this simple fact for the problem \eqref{eq:bbs} is the conservation of \eqref{eq:constant quantity} along optimal flows.

\paragraph{Short-time behavior of the entropic cost.} 

The fact that the rescaled entropic cost converges to the squared Wasserstein distance of order two in the short-time limit, i.e.
\be\label{cost to wass}
\lim_{T\rightarrow 0} T \, \cost(\mu,\nu) = \frac12 W_2^2(\mu,\nu),
\ee
has generated a surge of interest around the Schr\"odinger problem (henceforth SP), since SP is more regular and numerically easier to solve than the Monge-Kantorovich problem. There exist nowadays several proofs of \eqref{cost to wass}, see for instance \cite{Leonard12,Mikami04} for $\Gamma$-convergence results. A proof that is valid under the  hypotheses \eqref{hyp pot} and \eqref{hyp marginals weak} can be found in \cite[Remark 5.11]{GigTam18}.
In \cite{Adams2011}, (see also \cite{duong2013wasserstein, erbar2015from, pal2019difference}) a further fundamental step was taken that consists in computing the first order term in the expansion of $T\mathscr{C}_T(\mu,\nu)$ around $T=0$. Referring to the above mentioned articles for precise statements, we have the following expansion\footnote{In the above mentioned references, one often finds $\cH(\mu\,|\,\mm)-\cH(\nu\,|\,\mm)$ instead of $\cH(\mu\,|\,\mm)+\cH(\nu\,|\,\mm)$ in the first order term. This is due to a slightly different choice of reference measure $R_{0,T}$. Since we prefer the reference measure to be reversible, we gain an extra $\cH(\mu\,|\,\mm)$ term in the Taylor expansion.}
\be\label{eq:1storder}
T\mathscr{C}_T(\mu,\nu) = \frac{1}{2}W_2^2(\mu,\nu) + \frac{T}{2} (\cH(\mu\,|\,\mm) + \cH(\nu\,|\,\mm)) +o(T).
\ee
In this work we establish at Theorem \ref{thm short time} a non-asymptotic bound for $T\mathscr{C}_T(\mu,\nu) - W_2^2(\mu,\nu)/2$ which is sharp in the limit $T \to 0$ and we compute the second order term in \eqref{eq:1storder}, thus getting
\begin{equation}
\label{eq:2nd order}
T\mathscr{C}_T(\mu,\nu) = \frac{1}{2}W_2^2(\mu,\nu) + \frac{T}{2}(\cH(\mu\,|\,\mm)+\cH(\nu\,|\,\mm))+\frac{T^2}{8} \iint_0^1 |\nabla \log \rho^0_t|^2\rho^0_t \,\De t \De\mm + o(T^2),
\end{equation}
where $(\rho^0_t \mm)_{t\in[0,1]}$ denotes the (unique) Wasserstein geodesic between $\mu$ and $\nu$. Let us remark that \eqref{eq:2nd order} tells that the rescaled cost is convex around $T=0$ and using functional inequalities such as the HWI inequality \cite{OttoVillani00} one can also estimate from below its second derivative under the Bakry-\'Emery condition \eqref{hyp pot}. It is an interesting question to obtain general versions of \eqref{eq:2nd order} in terms of $\Gamma$-convergence. In addition, it is worth noticing that the energy, once properly rescaled, also converges to the squared Wasserstein distance, namely 
\[
\lim_{T\rightarrow 0} T^2 \cons(\mu,\nu) = \frac{1}{2} W_2^2(\mu,\nu).
\]
To see, at least formally, why this is expected to be true, one can integrate \eqref{eq:constant quantity} in time, use \eqref{eq:bbs} and the energy conservation to get 
\bes
T\cons(\mu,\nu) = \cost(\mu,\nu) - \frac12\Big(\ent(\mu\,|\,\mm) + \ent(\nu\,|\,\mm)\Big) - \frac{1}{4}\iint_{0}^T |\nabla \log \bar{\rho}^T_t|^2 \bar{\rho}^T_t\, \De t\De \mm
\ees
Multiplying by $T$, letting $T\rightarrow 0$ and using \eqref{cost to wass} we can then handle the first term on the right-hand side as well as $\ent(\mu\,|\,\mm) + \ent(\nu\,|\,\mm)$. The fact that the last integral converges to $0$ has been proved in \cite[Lemma 3.3]{GLRT19} and the same argument can be adapted verbatim to the present setting (see \cite{Tamanini20}); yet it is a non-trivial fact, as the entropic interpolation $(\bar{\rho}^T_t)_{t \in [0,T]}$ converges to the Wasserstein geodesic $(\rho^0_t)_{t\in[0,T]}$ between $\mu$ and $\nu$, whose Fisher information needs not to be defined.

\paragraph{Long-time behavior of the entropic cost.} 
Although its asymptotic regime has been the object of recent studies in connection with the ergodic behavior of Schr\"odinger bridges \cite{backhoff2019mean} and the limiting behavior of Sinkhorn divergence \cite{feydy2018interpolating}, very few results are available. In this article we prove that under mild assumptions we have
\begin{equation}
\label{cost to entropy}
\lim_{T\rightarrow +\infty}  \cost(\mu,\nu) = \ent(\mu \,|\,\mm)+\ent(\nu\,|\,\mm),
\end{equation}
The intuition behind \eqref{cost to entropy} is that $R_{0,T}$ converges towards $\mm\otimes\mm$ and therefore the optimal coupling in SP converges towards the independent coupling of $\mu$ and $\nu$. We remark that in a different although related context \cite{feydy2018interpolating}, the convergence of Sinkhorn divergences towards MMD divergences in the limit when the regularization parameter goes to $+\infty$ shares many analogies with \eqref{cost to entropy}.

Our second main result deals with the approximation error in \eqref{cost to entropy} and states that it is exponentially small (see Theorem \ref{thm cost bound} for the rigorous statement)
\be\label{long time intro}
|\cost(\mu,\nu) - \ent(\mu \,|\,\mm)-\ent(\nu\,|\,\mm)| \lesssim \exp\Big(-\frac{\kappa}{2} T\Big)
\ee
provided the Bakry-\'Emery condition \eqref{hyp pot} is satisfied. The proof of \eqref{long time intro} is based on Theorem \ref{thm:derivcost} asserting that the time derivative of $\cost(\mu,\nu)$ is precisely $-\mathscr{E}_{T}(\mu,\nu)$ and on two functional inequalities. The first one is a version of the Talagrand inequality (obtained in  \cite{Conforti17} and also called entropic Talagrand inequality) and the second one is a functional inequality relating $|\mathscr{E}_{T}(\mu,\nu)|$ with $\cost(\mu,\nu)$, that we call ``energy-transport" inequality (cf.\ Lemma \ref{lem:cons cost}). A similar inequality has been proved very recently in \cite{backhoff2019mean}; there it has been used to obtain the so-called \emph{turnpike property} for mean field Schr\"odinger bridges.
It is worth noticing that estimates such as \eqref{long time intro} do not seem to follow from classical functional inequalities such as Talagrand and Log-Sobolev, whereas they can be proven using the new family of functional inequalities involving the entropic cost $\cost(\mu,\nu)$. The final contribution of the article is to derive a bound similar to \eqref{long time intro} for the Mean Field Schr\"odinger problem introduced in \cite{backhoff2019mean}. Since we could not establish a generalization of the differentiation formula at Theorem \ref{thm:derivcost} to the mean field setup, the proof of this estimate follows a different scheme, but is still based on a class of functional inequalities derived in \cite{backhoff2019mean} that are the mean field versions of the entropic Talagrand and of the energy-transport inequalities mentioned above.
\paragraph{Organization of the paper.} 

The document is structured as follows: in the remainder of Section \ref{sec:stat main res} we state and comment the main results, whose proofs are contained in Section \ref{sec:proofs}. In Section \ref{sec:preliminaries} we collect, for reader's sake, all relevant results and bibliographical references on SP. Finally, in Appendix \ref{appendix} we prove the sharpness of a functional inequality introduced by the first-named author in \cite{Conforti17}, which plays an important role in this paper.


\subsection{First and second derivative of the entropic cost}

The regularity of the entropic cost w.r.t.\ to the time variable $T$ has never been investigated to the best of our knowledge, so that the following is the first result of such a kind and it plays a pivotal role in the study of both the long- and short-time behavior of $\cost(\mu,\nu)$.

\begin{theorem}\label{thm:derivcost}
Given \eqref{hyp pot} and \eqref{hyp marginals weak} the following hold:
\begin{itemize}
\item[(i)] the map $T \mapsto \cost(\mu,\nu)$ is $C^1((0,\infty))$, twice differentiable a.e.\ and the first derivative is given by
\[
\frac{\De}{\De T}\cost(\mu,\nu) = -\cons(\mu,\nu), \qquad \forall T > 0;
\]
\item[(ii)] the map $T \mapsto T\cost(\mu,\nu)$ is $C^1((0,\infty))$ and twice differentiable a.e. The first derivative is given for all $T>0$ by
\bes
\frac{\De}{\De T}\big(T\cost(\mu,\nu)\big) = \cost(\mu,\nu) - T\cons(\mu,\nu)
\ees
or equivalently by 
\bes
\frac{\De}{\De T}\big(T\cost(\mu,\nu)\big) =\frac{1}{2}\cH(\mu\,|\,\mm)+\frac{1}{2}\cH(\nu\,|\,\mm) + \frac{1}{4} \iint_0^T |\nabla\log\bar{\rho}^T_t|^2\, \bar{\rho}^T_t \De t\De\mm,
\ees
where $(\bar{\rho}^T,\bar{v}^T)$ is the optimal solution in \eqref{eq:bbs}. The second derivative writes as
\[
\begin{split}
\frac{\De^2}{\De T^2}\big(T\cost(\mu,\nu)\big) = \frac{1}{4}\frac{\De}{\De T}\iint_0^T|\nabla \log \bar{\rho}^T_t|^2\bar{\rho}^T_t\, \De t\De\mm = -2\cons(\mu,\nu) - T\frac{\De}{\De T}\cons(\mu,\nu)
\end{split}
\]
for a.e.\ $T>0$.
\end{itemize}
\end{theorem}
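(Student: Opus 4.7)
The plan is to isolate the $T$-dependence of the Benamou-Brenier formula \eqref{eq:bbs} by a time rescaling and then extract every regularity statement from a single concavity observation.

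\textbf{Step 1 (reparametrization).} I would set $\hat\rho_s:=\bar\rho_{sT}$ and $\hat v_s:=T\bar v_{sT}$, so that the change of variable $t=sT$ turns \eqref{eq:bbs} into
\[
\mathscr{C}_T(\mu,\nu) - \tfrac12\bigl(\cH(\mu|\mm)+\cH(\nu|\mm)\bigr) = \inf_{(\hat\rho,\hat v)}\iint_0^1\Big(\tfrac{1}{2T}|\hat v_s|^2 + \tfrac{T}{8}|\nabla\log\hat\rho_s|^2\Big)\hat\rho_s\,ds\,d\mm,
\]
the infimum now ranging over admissible curves on $[0,1]$ with marginals $\mu,\nu$, a set that no longer depends on $T$. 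Multiplying by $T$ and writing $Y=T^2$ yields
\[
T\mathscr{C}_T(\mu,\nu) - \tfrac{T}{2}\bigl(\cH(\mu|\mm)+\cH(\nu|\mm)\bigr) = \inf_{(\hat\rho,\hat v)}\Big\{\tfrac12 A(\hat\rho,\hat v)+\tfrac{Y}{8}B(\hat\rho)\Big\}=:\Psi(Y),
\]
with $A:=\iint|\hat v|^2\hat\rho\,ds\,d\mm$ and $B:=\iint|\nabla\log\hat\rho|^2\hat\rho\,ds\,d\mm$.

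\textbf{Step 2 (concavity and a.e.\ regularity).} Since each term inside the last infimum is an \emph{affine} function of $Y$, $\Psi$ is concave on $(0,\infty)$. Hence it is locally Lipschitz, both one-sided derivatives exist at every point, and the monotone $\Psi'$ is differentiable a.e.\ by Lebesgue's theorem, so $\Psi$ is twice differentiable almost everywhere. Via the smooth diffeomorphism $T\mapsto T^2$ these properties transfer first to $T\mapsto T\mathscr{C}_T(\mu,\nu)$ and then to $T\mapsto \mathscr{C}_T(\mu,\nu)$ on $(0,\infty)$.

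\textbf{Step 3 (envelope argument, first derivative and part (ii)).} Using $(\hat\rho^{T_0},\hat v^{T_0})$ as a competitor at $T$ and vice versa squeezes the difference quotient $\tfrac{\mathscr{C}_T(\mu,\nu)-\mathscr{C}_{T_0}(\mu,\nu)}{T-T_0}$ between $-\tfrac{A(\hat\rho^{T_0},\hat v^{T_0})}{2T_0T}+\tfrac{B(\hat\rho^{T_0})}{8}$ and $-\tfrac{A(\hat\rho^T,\hat v^T)}{2T_0T}+\tfrac{B(\hat\rho^T)}{8}$. Letting $T\to T_0$ and undoing the reparametrization, both bounds converge to $-\tfrac12\int|\bar v_t^{T_0}|^2\bar\rho_t^{T_0}d\mm+\tfrac18\int|\nabla\log\bar\rho_t^{T_0}|^2\bar\rho_t^{T_0}d\mm=-\mathscr{E}_{T_0}(\mu,\nu)$ by the conservation law \eqref{eq:constant quantity}, which establishes $\tfrac{\De}{\De T}\mathscr{C}_T(\mu,\nu)=-\mathscr{E}_T(\mu,\nu)$; continuity of $T\mapsto\mathscr{E}_T(\mu,\nu)$ inherited from Step~2 upgrades $\mathscr{C}_T$ to $C^1$. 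For part (ii), Leibniz gives the first formula, and its alternative form follows from the identity $\mathscr{C}_T-T\mathscr{E}_T = \tfrac12(\cH(\mu|\mm)+\cH(\nu|\mm))+\tfrac14\iint_0^T|\nabla\log\bar\rho_t^T|^2\bar\rho_t^T\,dt\,d\mm$, itself obtained by inserting the constancy of \eqref{eq:constant quantity} into \eqref{eq:bbs}. Differentiating once more and applying (i) delivers both expressions for the second derivative.

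\textbf{Main obstacle.} The delicate point is to show that $\tfrac{\De}{\De T}\mathscr{C}_T=-\mathscr{E}_T$ holds at \emph{every} $T>0$ with a continuous right-hand side, as opposed to merely almost everywhere. Concavity of $\Psi$ provides continuity of $A(\hat\rho^T,\hat v^T)$ and $B(\hat\rho^T)$ outside at most a countable set for free; closing the remaining gap will require a stability argument for the entropic interpolation, based either on uniqueness of the minimizer in \eqref{eq:bbs} or on a careful examination of the supporting affine functions of $\Psi$.
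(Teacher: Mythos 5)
Your Steps 1--3 reproduce the paper's strategy almost exactly: the reparametrization is \eqref{eq:bbs2}--\eqref{eq:reparametrization}, and the sandwich of the difference quotient between the two competitor bounds is precisely the argument in the paper's proof via \eqref{eq:difference quotient} and \eqref{eq:difference quotient 2}. Your concavity observation ($\Psi$ is an infimum of functions affine in $Y=T^2$) is a clean repackaging of the paper's Lemma \ref{lem:incr fish}: monotonicity of $T\mapsto\iint_0^1|\nabla\log\rho_t^T|^2\rho_t^T\,\De t\De\mm$ is exactly the monotonicity of the slopes of the supporting affine functions of $\Psi$, and it does deliver the a.e.\ twice differentiability for free. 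So the skeleton is right and matches the paper.

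The gap is the one you flag yourself at the end, and it is genuine: it is not a finishing touch but the technical core of the paper's proof. Concavity alone cannot rule out kinks of $\Psi$, so without more you only obtain the formula $\frac{\De}{\De T}\cost(\mu,\nu)=-\cons(\mu,\nu)$ outside a countable set, not the claimed $C^1((0,\infty))$ regularity and validity for \emph{every} $T>0$. Note also that in your squeeze the upper bound converges to $-\cons[T_0](\mu,\nu)$ automatically, but the lower bound involves $A(\hat\rho^{T},\hat v^{T})$ and $B(\hat\rho^{T})$ evaluated at the \emph{varying} minimizer, so one genuinely needs $T\mapsto B(\hat\rho^T)$ (equivalently $T\mapsto\cons(\mu,\nu)$) to be continuous. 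The paper establishes this through a chain of lemmas you would have to supply: uniform $L^\infty$ bounds on the $(f,g)$-decomposition locally in $T$ (Lemma \ref{lem:fg bounds}), $L^p(\mm)$-continuity of $T\mapsto f^T,g^T$ via Banach--Alaoglu compactness, pointwise convergence of $\sfP_{T}f^{T}$, and the uniqueness of the decomposition to identify the limit (Lemma \ref{lem:fg unif conv}), and finally continuity of the energy via integration by parts and the a priori estimate \eqref{eq:apriori} (Lemma \ref{lem:energy is continuous}). ``Uniqueness of the minimizer in \eqref{eq:bbs}'' by itself, which is one of the two routes you suggest, is not enough: one still needs a compactness-plus-identification argument to convert it into stability, and that is several pages of work in the paper. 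Until that is done, your proposal proves strictly less than the statement.
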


As implicitly stated above, by Theorem \ref{thm:derivcost} we see that $T \mapsto \cons(\mu,\nu)$ is continuous on $(0,\infty)$ and differentiable a.e.

\subsection{Long-time behavior of entropic cost and energy}

As a first step we show that

\begin{theorem}[Long-time behavior of entropic cost and energy]\label{thm:longtime}
Under \eqref{hyp pot} with $\kappa \geq 0$ and \eqref{hyp ref}, for any $\mu,\nu \in \cP(M)$ satisfying \eqref{hyp marginals weakest} it holds
\bes
    \lim_{T \to \infty} \cost(\mu,\nu) = \cH(\mu\,|\,\mm) + \cH(\nu\,|\,\mm).
\ees
If $\mu,\nu \in \cP(M)$ satisfy \eqref{hyp marginals weak}, then it also holds
\[
\lim_{T \to \infty}\cons(\mu,\nu) = 0.
\]
\end{theorem}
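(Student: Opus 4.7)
My plan is to handle the two statements in sequence, feeding the first into the proof of the second.

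For the first statement, my approach is to sandwich $\cost(\mu,\nu)$ between two explicit bounds which both collapse onto $\cH(\mu\,|\,\mm)+\cH(\nu\,|\,\mm)$ as $T\to\infty$. Write $R_{0,T}(\De x\De y)=p_T(x,y)\mm(\De x)\mm(\De y)$, with $p_T$ the transition density of \eqref{SDE}. The chain rule for relative entropy yields, for every $\pi\in\Pi(\mu,\nu)$,
\[
\cH(\pi\,|\,R_{0,T})=\cH(\pi\,|\,\mu\otimes\nu)+\cH(\mu\,|\,\mm)+\cH(\nu\,|\,\mm)-\iint\log p_T\,\De\pi.
\]
Taking $\pi=\mu\otimes\nu$ provides the upper bound
\[
\cost(\mu,\nu)\leq\cH(\mu\,|\,\mm)+\cH(\nu\,|\,\mm)-\iint\log p_T\,\De\mu\,\De\nu,
\]
while the Gibbs variational inequality $\int f\,\De p\leq\cH(p\,|\,q)+\log\int e^f\,\De q$ applied with $p=\pi$, $q=\mu\otimes\nu$, $f=\log p_T$ produces $\cH(\pi\,|\,\mu\otimes\nu)-\iint\log p_T\,\De\pi\geq-\log\int\sigma P_T\rho\,\De\mm$, where $\mu=\rho\mm$, $\nu=\sigma\mm$ and $(P_T)$ is the Markov semigroup associated with \eqref{SDE}. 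Substituting into the identity above yields
\[
\cost(\mu,\nu)\geq\cH(\mu\,|\,\mm)+\cH(\nu\,|\,\mm)-\log\int\sigma P_T\rho\,\De\mm.
\]
Finally, I would use ergodicity of the self-adjoint semigroup $(P_T)$ with invariant probability $\mm$, available under \eqref{hyp pot} with $\kappa\geq 0$ and \eqref{hyp ref}, to obtain $P_T\rho\to 1$ and to conclude that both correction terms vanish in the limit.

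For the second statement, my plan is to read $\cons(\mu,\nu)\to 0$ directly off Theorem~\ref{thm:derivcost}. Setting
\[
A(T):=\tfrac12\iint_0^T|\bar v^T_t|^2\bar\rho^T_t\,\De t\De\mm,\qquad B(T):=\tfrac18\iint_0^T|\nabla\log\bar\rho^T_t|^2\bar\rho^T_t\,\De t\De\mm,
\]
the Benamou-Brenier identity \eqref{eq:bbs} reads $\cost(\mu,\nu)=\tfrac12(\cH(\mu\,|\,\mm)+\cH(\nu\,|\,\mm))+A(T)+B(T)$, whereas integrating the conserved quantity \eqref{eq:constant quantity} over $[0,T]$ (equivalently, inspecting Theorem~\ref{thm:derivcost}(ii)) gives $T\cons(\mu,\nu)=A(T)-B(T)$. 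Since \eqref{hyp marginals weak} implies \eqref{hyp marginals weakest}, the first statement applies and forces $A(T)+B(T)\to\tfrac12(\cH(\mu\,|\,\mm)+\cH(\nu\,|\,\mm))$, hence uniformly bounded in $T$. As $A,B\geq 0$ individually, each is bounded, so $|T\cons(\mu,\nu)|\leq A(T)+B(T)$ stays bounded, and dividing by $T$ closes the argument.

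The delicate step is the convergence $\iint\log p_T\,\De\mu\,\De\nu\to 0$ in the upper bound for Part~1: $\log p_T$ can take arbitrarily large negative values where the kernel is small, and one has to control this tail. For marginals with bounded compactly supported densities, Gaussian-type heat-kernel estimates available under $\CD(\kappa,\infty)$ deliver uniform convergence $p_T\to 1$ on compact sets and the conclusion is routine; under the weaker \eqref{hyp marginals weakest}, an approximation step leveraging hypercontractivity (when $\kappa>0$) or a more careful tightness/mean-ergodic argument exploiting the $\cP_2$ and finite-entropy controls of $\mu,\nu$ (in the borderline case $\kappa=0$) will be required.
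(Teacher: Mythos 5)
Your second claim (the energy) is handled exactly as in the paper: the identity $T\cons(\mu,\nu)=A(T)-B(T)$ together with $A(T)+B(T)=\cost(\mu,\nu)-\frac12(\cH(\mu\,|\,\mm)+\cH(\nu\,|\,\mm))$ and $A,B\geq 0$ is precisely Lemma \ref{lem:cost bounds cons}, and dividing by $T$ after the first claim is correct (and \eqref{hyp marginals weak} does imply \eqref{hyp marginals weakest}, so there is no circularity). For the first claim you replace the paper's $\Gamma$-convergence argument (Lemma \ref{lem:gamma convergence}: joint lower semicontinuity of $\cH(\cdot\,|\,\cdot)$, the weak convergence $R_{0,T}\rightharpoonup\mm\otimes\mm$, compactness of $\Pi(\mu,\nu)$) by an explicit sandwich. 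Your upper bound is sound and is in substance the paper's $\Gamma$-limsup evaluated at the product coupling: all that is needed is $\limsup_T\big(-\iint\log\hp_T\,\De\mu\De\nu\big)\leq 0$, which follows one-sidedly from the Gaussian lower bound \eqref{eq:lowerbound} (giving $-\log\hp_T(x,y)\leq C\,\sfd^2(x,y)/T$ when $\kappa\geq 0$) together with $\mu,\nu\in\cP_2(M)$. So the step you single out as delicate is in fact the easy one; no two-sided convergence of $\iint\log\hp_T\,\De\mu\De\nu$ is required, and no extra regularity of the marginals beyond \eqref{hyp marginals weakest} is needed there.

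The genuine gap is in the lower bound. After the Donsker--Varadhan step you must show $\limsup_{T\to\infty}\int\sigma\,\sfP_T\rho\,\De\mm\leq 1$, equivalently $\limsup_T\int\sfP_{T/2}\sigma\cdot\sfP_{T/2}\rho\,\De\mm\leq 1$, knowing only that $\rho,\sigma$ are probability densities with $\rho\log\rho,\,\sigma\log\sigma\in L^1(\mm)$. Ergodicity gives $\sfP_{T/2}\rho\to 1$ and $\sfP_{T/2}\sigma\to 1$ in $L^1(\mm)$, but the integral of a product of two $L^1$-convergent sequences need not converge. The spectral argument that rescues this (namely $\|\sfP_{s}\rho\|_{L^2(\mm)}\downarrow\langle\rho,1\rangle=1$, then Cauchy--Schwarz) requires $\sfP_{t_0}\rho\in L^2(\mm)$ for some finite $t_0$; this is automatic under \eqref{hyp marginals weak} (bounded, compactly supported densities) but not under \eqref{hyp marginals weakest}, since the semigroup is not ultracontractive in general (Ornstein--Uhlenbeck being the standard example), Gross hypercontractivity only improves $L^p$ for $p>1$ and does not by itself upgrade $L\log L$ to $L^2$, and for $\kappa=0$ there is no log-Sobolev inequality to invoke at all. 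As written, then, your argument proves the first claim under \eqref{hyp marginals weak} but not under the stated hypothesis \eqref{hyp marginals weakest}; closing it requires either a nontrivial approximation of the marginals combined with a stability estimate for $\cost$ that is uniform in $T$, or the paper's route, which never expands the lower bound at all and instead extracts a weakly convergent subsequence of optimal couplings and uses lower semicontinuity of the relative entropy in both arguments along $R_{0,T}\rightharpoonup\mm\otimes\mm$.
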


As concerns the long-time behavior of $\cost(\mu,\nu)$, we provide two different proofs:
\begin{itemize}
    \item the former is direct and relies on a $\Gamma$-convergence argument (see Section \ref{subsec:gamma});
    \item the latter is more technical and requires slightly stronger assumptions on $\mu$ and $\nu$, but as an advantage it allows us to determine the long-time behavior of the so-called ``$(f,g)$-decomposition'' of the optimal coupling in SP (see Section \ref{sec:preliminaries} for its definition, Section \ref{subsec:strong} for the proof).
\end{itemize}
As an application of this result we provide a new proof of the logarithmic Sobolev inequality based on entropic interpolations, in the same spirit of the recent paper \cite{GLRT19}, where an ``entropic'' proof of the HWI inequality is established.

\begin{cor}[Log-Sobolev inequality]\label{cor:logsob}
Under \eqref{hyp pot} with $\kappa > 0$, for any $\mu = \rho\mm \in \cP(M)$ it holds
\begin{equation}\label{eq:logsob}
    \ent(\mu\,|\,\mm) \leq \frac{1}{2\kappa}\int |\nabla\log\rho|^2\,\De\mu,
\end{equation}
where the right-hand side is set equal to $+\infty$ if $\log\rho$ is not locally Sobolev.
\end{cor}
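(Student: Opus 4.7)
The plan is to combine the long-time limit of the entropic cost (Theorem \ref{thm:longtime}) with the dynamical formulation \eqref{eq:bbs} and a heat-flow trial. The starting observation is that, applied with $\nu = \mm$ and using $\cH(\mm\,|\,\mm) = 0$, Theorem \ref{thm:longtime} gives $\lim_{T\to +\infty}\cost(\mu,\mm) = \cH(\mu\,|\,\mm)$. Rewriting the left-hand side with \eqref{eq:bbs}, this is equivalent to the limit identity
\[
\lim_{T\to +\infty}\iint_0^T\Big(\frac12|\bar v^T_t|^2 + \frac18|\nabla\log\bar\rho^T_t|^2\Big)\bar\rho^T_t\,\De t\De\mm \;=\; \frac12\cH(\mu\,|\,\mm).
\]

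The key step is then to produce, for every $T$ large, an upper bound on this BB action in terms of the Fisher information of $\mu$. The natural candidate is the heat-flow trajectory $\rho_t := P_t\rho$, where $(P_t)_{t\geq 0}$ is the semigroup associated with the SDE \eqref{SDE}, paired with the gradient velocity $v_t := -\tfrac12\nabla\log\rho_t$. This pair solves the continuity equation \eqref{eq:conteq}, and on it the kinetic-plus-Fisher integrand collapses to $\tfrac14|\nabla\log\rho_t|^2\rho_t$, yielding the action $\tfrac14\int_0^T\int|\nabla\log\rho_t|^2\rho_t\,\De\mm\,\De t$. The Bakry-\'Emery hypothesis \eqref{hyp pot} with $\kappa>0$ forces the standard exponential decay of Fisher information along the semigroup, $\int|\nabla\log\rho_t|^2\rho_t\,\De\mm \leq e^{-\kappa t}\int|\nabla\log\rho|^2\,\De\mu$, whence the heat-flow action is bounded by $\frac{1}{4\kappa}\int|\nabla\log\rho|^2\,\De\mu$, uniformly in $T$.

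The delicate point, where I expect the main obstacle to lie, is that $\rho_t = P_t\rho$ only converges to $1$ as $t\to +\infty$, so for any finite horizon $T$ the heat flow alone is not an admissible competitor for $\cost(\mu,\mm)$. The remedy is to modify it on a small terminal window: use the heat flow on $[0, T-\delta]$ and then append a short Benamou-Brenier segment on $[T-\delta,T]$ joining $P_{T-\delta}\mu$ to $\mm$. Under $\kappa>0$ the standard contraction estimates give that both $\cH(P_{T-\delta}\mu\,|\,\mm)$ and $W_2^2(P_{T-\delta}\mu,\mm)$ decay exponentially in $T-\delta$, so the extra action produced by the terminal segment is $o(1)$ as $T\to +\infty$ with $\delta$ kept fixed. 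Combining this modified upper bound with the limit identity above yields $\frac12\cH(\mu\,|\,\mm) \leq \frac{1}{4\kappa}\int|\nabla\log\rho|^2\,\De\mu$, which is exactly \eqref{eq:logsob}. A standard density/approximation argument then handles the general $\mu$ of the statement by reducing to marginals satisfying \eqref{hyp marginals weak} and with finite Fisher information, for which the construction above is meaningful.
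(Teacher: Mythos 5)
Your route is genuinely different from the paper's. The paper never touches the Benamou--Brenier formulation here: it differentiates the distorted convexity inequality \eqref{eq:k-convexity} at $t=0$ (after justifying, via the $\delta$-regularization of Lemma \ref{lem:fg derivatives}, that $t\mapsto\ent(\mu_t^T\,|\,\mm)$ is differentiable at $t=0$ with derivative $\int\nabla\log\rho\cdot v_0^T\,\De\mu$), then computes $\lim_{T\to\infty}\frac1T\int\nabla\log\rho\cdot v_0^T\,\De\mu=-\frac12\int|\nabla\log\rho|^2\,\De\mu$ using $\sfP_Tg^T\to1$; the auxiliary marginal $\nu$ is kept compactly supported and its entropy cancels in the limit thanks to Theorem \ref{thm:longtime}. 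Your heat-flow-competitor idea is a legitimate alternative, and its core computation is right: on the pair $(\sfP_t\rho,-\tfrac12\nabla\log\sfP_t\rho)$ the integrand of \eqref{eq:bbs} collapses to $\tfrac14|\nabla\log\sfP_t\rho|^2\sfP_t\rho$, and \eqref{eq:bakry-emery} plus Cauchy--Schwarz give $\int|\nabla\log\sfP_t\rho|^2\sfP_t\rho\,\De\mm\le e^{-\kappa t}\int|\nabla\log\rho|^2\,\De\mu$, so the constants come out exactly as in \eqref{eq:logsob}.

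The gap sits exactly where you flagged it, and your remedy does not close it. The action in \eqref{eq:bbs} contains the Fisher term $\tfrac18\int|\nabla\log\bar\rho_t|^2\bar\rho_t\,\De\mm$, and along a Wasserstein geodesic from $\sfP_{T-\delta}\mu$ to $\mm$ this term is controlled by neither $W_2^2(\sfP_{T-\delta}\mu,\mm)$ nor $\cH(\sfP_{T-\delta}\mu\,|\,\mm)$ --- it may be infinite --- so the claimed $o(1)$ contribution of the terminal segment is unsubstantiated. Moreover, the ``exponential decay of $\cH(\sfP_s\mu\,|\,\mm)$'' you invoke is itself a consequence of the log-Sobolev inequality (Gr\"onwall applied to de Bruijn's identity), hence circular; only $\cH(\sfP_s\mu\,|\,\mm)\to0$, which follows from ergodicity \eqref{eq:ergodicity} and dominated convergence, is available without \eqref{eq:logsob}. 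Two repairs: either take as terminal segment the \emph{entropic} interpolation from $\sfP_{T-\delta}\mu$ to $\mm$, whose action equals $\mathscr{C}_\delta(\sfP_{T-\delta}\mu,\mm)-\tfrac12\cH(\sfP_{T-\delta}\mu\,|\,\mm)$ and is $o(1)$ by the entropic Talagrand inequality \eqref{eq:ent-talagrand2}; or, more simply, drop the correction, use the heat flow as competitor for $\mathscr{C}_T(\mu,\sfP_T\mu)$ together with the marginal bound $\mathscr{C}_T(\mu,\sfP_T\mu)\ge\cH(\mu\,|\,\mm)$ (data processing: $\cH(\pi\,|\,R_{0,T})\ge\cH(\pi_0\,|\,\mm)$), which yields $\tfrac12\cH(\mu\,|\,\mm)\le\tfrac12\cH(\sfP_T\mu\,|\,\mm)+\tfrac{1}{4\kappa}\int|\nabla\log\rho|^2\,\De\mu$ and concludes by letting $T\to\infty$. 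In either variant you must also justify the upper-bound half of \eqref{eq:bbs} for marginals that are not compactly supported ($\mm$ or $\sfP_T\mu$), since the paper establishes \eqref{eq:bbs} only under \eqref{hyp marginals weak}; this extra approximation is not routine and cannot simply be deferred to the final density argument.
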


As a further step, in the following result we improve Theorem \ref{thm:longtime} by providing sharp rates of convergence for both $\cost(\mu,\nu)$ and $\cons(\mu,\nu)$. The key message is that, under a positive curvature condition, the approximation error is asymptotically smaller than $\exp(-\kappa T/2)$, up to constant factors depending on $\cH(\mu\,|\,\mm)$ and $\cH(\nu\,|\,\mm)$, and the rate $\exp(-\kappa T/2)$ is sharp. As already pointed out, recall that if \eqref{hyp pot} holds with $\kappa>0$, then \eqref{hyp ref} also holds.

\begin{theorem}[Long-time behavior of entropic cost and energy - rate of convergence]\label{thm cost bound}
Let us assume that \eqref{hyp pot} with $\kappa>0$ and \eqref{hyp marginals weak} are satisfied. Then for all $T>0$ it holds
\be
\label{long time conv bound}
|\cost(\mu,\nu) - \cH(\mu\,|\,\mm) - \cH(\nu\,|\,\mm)| \leq \frac{2}{\exp(\kappa T/2)-1}\Big(\cH(\mu\,|\,\mm) + \cH(\nu\,|\,\mm)\Big)
\ee
and
\begin{equation}
\label{eq:long time energy bound}
|\cons(\mu,\nu)| \leq  \frac{\kappa \exp(-\kappa T/2)}{(1-\exp(-\kappa T/2))^2} \Big( \cH(\mu\,|\,\mm) + \cH(\nu\,|\,\mm)\Big)^2.
\end{equation}
Furthermore, the convergence rate $\exp(-\kappa T/2)$ in \eqref{long time conv bound} and \eqref{eq:long time energy bound} is sharp in the following sense: for all $\mu,\nu$ as in \eqref{hyp marginals weak} it holds
\[
\lim_{T \to \infty}\frac{1}{T}\log|\cost(\mu,\nu) - \cH(\mu\,|\,\mm) - \cH(\nu\,|\,\mm)| \leq -\frac{\kappa}{2}, \qquad \lim_{T \to \infty}\frac{1}{T}\log|\cons(\mu,\nu)| \leq -\frac{\kappa}{2}
\]
and there exists a triplet $(M',\sfd_g',\mm')$ satisfying \eqref{hyp pot} with $\kappa>0$ such that
\[
\lim_{T \to \infty}\frac{1}{T}\log|\cost(\mu,\nu) - \cH(\mu\,|\,\mm) - \cH(\nu\,|\,\mm)| \leq -\alpha\kappa, \qquad \lim_{T \to \infty}\frac{1}{T}\log|\cons(\mu,\nu)| \leq -\alpha\kappa
\]
holds for all $\mu,\nu$ satisfying \eqref{hyp marginals weak} if and only if $\alpha \leq 1/2$.
\end{theorem}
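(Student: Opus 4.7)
The plan is to combine the derivative formula of Theorem \ref{thm:derivcost} with the two functional inequalities already advertised in the introduction: the entropic Talagrand inequality of \cite{Conforti17} and the energy-transport inequality (Lemma \ref{lem:cons cost}). Setting
\[
F(T) := \cost(\mu,\nu) - \cH(\mu\,|\,\mm) - \cH(\nu\,|\,\mm), \qquad G := \cH(\mu\,|\,\mm) + \cH(\nu\,|\,\mm),
\]
Theorem \ref{thm:derivcost} yields $F \in C^1((0,\infty))$ with $F'(T) = -\cons(\mu,\nu)$, while Theorem \ref{thm:longtime} provides the boundary condition $F(T) \to 0$ as $T \to \infty$. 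The goal is to close a differential inequality for $F$ that involves only $F$ itself and the constant $G$, and then to integrate it backwards from $T = \infty$.

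The analytical heart of the argument is to apply Lemma \ref{lem:cons cost} so as to control $|\cons|$ by an expression involving $\cost$, and then to invoke the entropic Talagrand inequality to re-express the upper bound in terms of $F$ and $G$ alone. This should produce an estimate of the form $|F'(T)| \leq \Phi(F(T), G)$ where $\Phi$ is an explicit elementary function calibrated so that the ODE $y' = -\Phi(y,G)$ with terminal condition $y(\infty) = 0$ admits $y(T) = 2G/(e^{\kappa T/2}-1)$ as its solution. The standard ODE comparison principle then delivers \eqref{long time conv bound}, and plugging this $F$-bound back into the energy-transport inequality yields \eqref{eq:long time energy bound}.

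Concerning the sharpness of the exponent $\kappa/2$, the upper $\limsup$ inequalities follow at once from \eqref{long time conv bound}--\eqref{eq:long time energy bound} by taking $(1/T)\log$ of both sides. The non-improvability will be established by an explicit Gaussian example: I take $M = \R$ with $U(x) = \kappa x^2/4$, so that $\mm$ is a centered Gaussian and \eqref{hyp pot} holds with the prescribed sharp constant $\kappa$. For Gaussian marginals $\mu,\nu$ the optimal Schr\"odinger plan is itself Gaussian and can be computed in closed form from the Mehler kernel of the associated Ornstein--Uhlenbeck semigroup; a direct asymptotic expansion will show that both $\cost(\mu,\nu) - G$ and $\cons(\mu,\nu)$ are proportional to $e^{-\kappa T/2}$ up to a strictly positive constant, ruling out any exponent $\alpha > 1/2$.

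The main obstacle lies in the first block: identifying the precise combination of the energy-transport and entropic Talagrand inequalities that yields a closed ODE whose solution matches exactly the stated prefactors $2G$ and $\kappa G^2$, and not merely the correct exponential rate. Any slack at this stage would propagate through the comparison argument and spoil the constants in \eqref{long time conv bound}--\eqref{eq:long time energy bound}. The Gaussian computation for the sharpness direction is, by contrast, essentially routine, but will require careful bookkeeping of the leading-order terms in the $T \to \infty$ expansion of the Gaussian entropic cost and energy.
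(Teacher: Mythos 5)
Your overall architecture is the right one and matches the paper's: write $\cost(\mu,\nu)-\cH(\mu\,|\,\mm)-\cH(\nu\,|\,\mm)=\int_T^\infty\cons[S](\mu,\nu)\,\De S$ using Theorem \ref{thm:derivcost} and the boundary value from Theorem \ref{thm:longtime}, control $|\cons|$ via Lemma \ref{lem:cons cost}, and remove the cost from the resulting bound via the entropic Talagrand inequality \eqref{eq:ent-talagrand}. However, the specific closure you propose --- an \emph{autonomous} differential inequality $|F'(T)|\leq\Phi(F(T),G)$ solved by comparison --- cannot be realized with these ingredients. Writing $\mathscr{C}_S-\cH(\mu\,|\,\mm)=F(S)+\cH(\nu\,|\,\mm)$, Lemma \ref{lem:cons cost} gives $|F'(S)|\leq\frac{\kappa}{e^{\kappa S/2}-1}\sqrt{(F(S)+\cH(\mu\,|\,\mm))(F(S)+\cH(\nu\,|\,\mm))}$, and to eliminate the explicit $S$-dependence you would need an \emph{upper} bound on $(e^{\kappa S/2}-1)^{-1}$ in terms of $F(S)/G$; Talagrand supplies only the reverse inequality $F(S)\leq G/(e^{\kappa S/2}-1)$. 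The workable route (the paper's) goes the other way: use Talagrand at $t=1/2$ to bound $\mathscr{C}_S-\cH(\mu\,|\,\mm)$ from above by $e^{\kappa S/2}\big(e^{-\kappa S/2}\cH(\mu\,|\,\mm)+\cH(\nu\,|\,\mm)\big)/(e^{\kappa S/2}-1)$ (and symmetrically), so that the integrand becomes a fully explicit function of $S$ which one integrates over $[T,\infty)$; no comparison principle is needed. Relatedly, ``plugging the $F$-bound back'' into \eqref{eq:sharp} does not produce \eqref{eq:long time energy bound}: it yields a term of order $\kappa G\,e^{-\kappa S/2}$, linear rather than quadratic in $G$. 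The energy bound is instead obtained by inserting the Talagrand estimates directly into \eqref{eq:sharp}, which gives the product structure $\big(\cH(\mu\,|\,\mm)+\cH(\nu\,|\,\mm)\big)^2$.

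The sharpness argument also has a gap as stated: Gaussian marginals do not satisfy \eqref{hyp marginals weak} (compact support, bounded densities), and the theorem's ``only if $\alpha\leq 1/2$'' clause requires the counterexample marginals to lie in that class, so the closed-form Gaussian Schr\"odinger bridge is not directly admissible. The paper keeps the Ornstein--Uhlenbeck reference measure but takes compactly supported $\mu,\nu$ with prescribed first moments $\pm1$ and equal second moments; then only the cross term $\int xy\,\De\gamma^T$ enters, and it is controlled softly by the weak convergence $\gamma^T\rightharpoonup\mu\otimes\nu$ rather than by an exact computation. If you insist on Gaussians you must add a truncation/approximation step. Finally, you do not address the sharpness of \eqref{eq:long time energy bound} separately: the clean way is by contradiction, observing that a bound $|\cons|\leq Ce^{-\alpha\kappa T}$ with $\alpha>1/2$ would, upon integration over $[T,\infty)$, improve \eqref{long time conv bound} beyond its proven optimal rate.
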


In other words, it may be possible to improve the constant factor in \eqref{long time conv bound}, but the convergence rate $\exp(-\kappa T/2)$ is asymptotically sharp.

\begin{remark}
{\rm
Actually, in Section \ref{subsec:cost bound} we shall prove the following (stronger) bounds
\begin{subequations}
\begin{align}
\label{eq:stronger bound 1}
& |\cost(\mu,\nu) - \cH(\mu\,|\,\mm) - \cH(\nu\,|\,\mm)| \leq 2\frac{\sqrt{\cH(\mu\,|\,\mm)\cH(\nu\,|\,\mm) + \delta\exp(-\kappa T/2)}}{\exp(\kappa T/2)-1}, \\
\label{eq:stronger bound 2}
& |\cons(\mu,\nu)| \leq  \frac{\kappa \exp(-\kappa T/2)}{(1-\exp(-\kappa T/2))^2} \Big( \cH(\mu\,|\,\mm)\cH(\nu\,|\,\mm) + \delta\exp(-\kappa T/2)\Big),
\end{align}
\end{subequations}
where
\[ 
\delta = \cH(\mu\,|\,\mm)^2 + \exp(-\kappa T/2) \cH(\mu\,|\,\mm) \cH(\nu\,|\,\mm) + \cH(\nu\,|\,\mm)^2.
\]
However we prefer the more compact, although slightly less precise, formulation given above. \fr
}
\end{remark}

\subsection{Short-time behavior of the entropic cost}

We provide a non-asymptotic bound for the difference $T\mathscr{C}_T(\mu,\nu)-W_2^2(\mu,\nu)/2$ under the assumption that the integral of the Fisher information along the displacement interpolation is finite and we also prove that this bound is sharp, as it coincides with the Taylor expansion of $T\mapsto T\cost(\mu,\nu)$ around $T=0$.\footnote{added in proof: With respect to a previous version of the paper, we have been able to remove quite demanding regularity assumptions. A similar but less general result (valid only in the Euclidean setting) has recently been obtained in \cite{chizat2020faster} by other means, hence our proof is independent.} This result is of particular interest as the second order term in the expansion of the rescaled cost has not been computed before (to the best of our knowledge) and its form is general enough to formulate more general version of \eqref{cost taylor} below, for instance in terms of $\Gamma$-convergence, that deserve to be the object of future work.

\begin{theorem}[Short-time behavior of the entropic cost]\label{thm short time}
Assume that \eqref{hyp pot} and \eqref{hyp marginals weak} hold. Then for all $T>0$ we have:
\be\label{short time bound}
0 \leq T\cost(\mu,\nu) -\frac{1}{2}W_2^2(\mu,\nu) \leq \frac{T}{2}\Big(\cH(\mu\,|\,\mm) + \cH(\nu\,|\,\mm)\Big) + \frac{T^2}{8} \iint_0^1 |\nabla \log \rho^0_t|^2 \rho^0_t\, \De t \De\mm,
\ee
where $(\rho^0_t \mm)_{t\in[0,1]}$ denotes the Wasserstein geodesic between $\mu$ and $\nu$ and $\int|\nabla\log\rho_t^0|^2\rho_t^0\,\De\mm := +\infty$ whenever $\rho_t^0$ is not Sobolev. 

If in addition the Bakry-\'Emery $\CD(\kappa,N)$ condition
\[
{\rm Ric}_{U,N} := {\rm Ric}_g - (N-n)\frac{{\rm Hess}(\exp(-\frac{2}{N-n}U))}{\exp(-\frac{2}{N-n}U)} \geq \kappa g
\]
holds for some $N \geq n = {\rm dim}(M)$ and $\iint_0^1 |\nabla \log \rho^0_t|^2 \rho^0_t\, \De t \De\mm < \infty$, then $T\mapsto T\cost(\mu,\nu)$ is twice differentiable at $T=0$ and the following expansion holds
\be
\label{cost taylor}
T\mathscr{C}_T(\mu,\nu) = \frac{1}{2}W_2^2(\mu,\nu) + \frac{T}{2}\Big(\cH(\mu\,|\,\mm) + \cH(\nu\,|\,\mm)\Big) + \frac{T^2}{8} \iint_0^1 |\nabla \log \rho^0_t|^2\rho^0_t \,\De t\De\mm + o(T^2).
\ee
\end{theorem}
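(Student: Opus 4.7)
The plan is twofold: first establish the pointwise bounds \eqref{short time bound} via the Benamou-Brenier formulation \eqref{eq:bbs}, then combine these with the first-derivative formula of Theorem \ref{thm:derivcost}(ii) and the short-time limit \eqref{cost to wass} to derive the Taylor expansion \eqref{cost taylor}.

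For the upper bound in \eqref{short time bound}, I use the time-rescaled Wasserstein geodesic $(\tilde\rho_t,\tilde v_t):=(\rho^0_{t/T},T^{-1}v^0_{t/T})$ on $[0,T]$ as a competitor in \eqref{eq:bbs}: it is admissible in the sense of (i)--(iii), its kinetic action equals $W_2^2(\mu,\nu)/(2T)$, and its Fisher action equals $\tfrac{T}{8}\iint_0^1|\nabla\log\rho^0_t|^2\rho^0_t\,\De t\,\De\mm$; multiplying the resulting estimate for $\cost(\mu,\nu)$ by $T$ gives the claimed upper bound (trivial when the Fisher integral on the right is infinite). For the lower bound, I apply the Benamou-Brenier inequality $T\iint_0^T\tfrac12|\bar v^T_t|^2\bar\rho^T_t\,\De t\,\De\mm\geq\tfrac12 W_2^2(\mu,\nu)$ to the kinetic component of the optimal entropic interpolation and invoke the non-negativity of both the Fisher action and of $\cH(\mu|\mm)+\cH(\nu|\mm)$.

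For the Taylor expansion, set $f(T):=T\cost(\mu,\nu)$; by \eqref{cost to wass}, $f(0^+)=\tfrac12 W_2^2(\mu,\nu)$. Theorem \ref{thm:derivcost}(ii), combined with the reparametrization $\hat\rho^T_r:=\bar\rho^T_{rT}$ for $r\in[0,1]$, yields
\[
f'(T)=\tfrac12\bigl(\cH(\mu|\mm)+\cH(\nu|\mm)\bigr)+\tfrac{T}{4}\,I^T,\qquad I^T:=\int_0^1\!\int|\nabla\log\hat\rho^T_r|^2\hat\rho^T_r\,\De\mm\,\De r.
\]
If $I^T\to I^0:=\int_0^1\!\int|\nabla\log\rho^0_r|^2\rho^0_r\,\De\mm\,\De r$ as $T\to 0^+$, then $f'$ extends continuously to $0$ with $f'(0^+)=\tfrac12(\cH(\mu|\mm)+\cH(\nu|\mm))$; integrating $f'$ over $[0,T]$ and using dominated convergence then delivers \eqref{cost taylor} and identifies $f''(0)=\tfrac14 I^0$.

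The main obstacle is precisely this convergence $I^T\to I^0$. It has two ingredients: (a) convergence of the entropic interpolations $\hat\rho^T$ to the Wasserstein geodesic $\rho^0$ in a sense strong enough to accommodate the Fisher functional, essentially available from the $\Gamma$-convergence theory underlying \eqref{cost to wass} (cf.\ \cite{Leonard12,GigTam18}), and (b) uniform-in-$T$ bounds on the Fisher action along entropic interpolations that allow exchange of limit and integral. The Bakry-\'Emery $\CD(\kappa,N)$ hypothesis enters precisely here: via Hessian/second-order estimates on the Schr\"odinger potentials $\log\bar\rho^T_t$ it propagates Fisher bounds along entropic interpolations in the spirit of \cite{GLRT19,Tamanini20}, and together with the assumed finiteness of $I^0$ this yields the required convergence, closing the argument.
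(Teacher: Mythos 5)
Your two-sided bound \eqref{short time bound} is essentially sound and close in spirit to the paper's argument: the paper derives the upper bound by integrating the derivative formula of Theorem \ref{thm:derivcost}(ii) from $\ve$ to $T$ and invoking the monotonicity of $T\mapsto\iint_0^1|\nabla\log\rho^T_t|^2\rho_t^T\,\De t\De\mm$ (Lemma \ref{lem:incr fish}), but that monotonicity is itself proved by the same competitor comparison you use, so your direct insertion of the rescaled Wasserstein geodesic into \eqref{eq:bbs} is a legitimate shortcut. Two caveats, both of which the paper shares rather than resolves: the admissibility of $(\rho^0,v^0)$ as a competitor requires a uniform $L^\infty$ bound on the geodesic densities $\rho^0_t$ (condition (i) of the continuity-equation class), which you do not justify; and your lower bound invokes the non-negativity of $\cH(\mu\,|\,\mm)+\cH(\nu\,|\,\mm)$, which is guaranteed by Jensen only when $\mm$ is a probability measure, whereas the theorem assumes only \eqref{hyp pot} and \eqref{hyp marginals weak}.

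The genuine gap is in the Taylor expansion. You correctly reduce everything to the continuity of $I^T:=\iint_0^1|\nabla\log\rho_t^T|^2\rho_t^T\,\De t\De\mm$ at $T=0$, but you do not prove it, and the mechanisms you point to would not deliver it. The inequality $\limsup_{T\downarrow0}I^T\leq I^0$ is the \emph{easy} half: it follows from Lemma \ref{lem:incr fish} alone and needs neither $\CD(\kappa,N)$ nor any ``uniform-in-$T$ Fisher bounds allowing exchange of limit and integral'' --- a uniform upper bound can only ever give the $\limsup$ direction. The hard half is $\liminf_{T\downarrow0}I^T\geq I^0$, i.e.\ a \emph{lower} semicontinuity statement for the Fisher information along the convergence $\rho_t^T\to\rho_t^0$, and neither the $\Gamma$-convergence theory behind \eqref{cost to wass} (which controls the full action, not its Fisher component separately) nor Hessian estimates on the Schr\"odinger potentials address it. What the paper actually does is: (i) use \cite[Propositions 5.1, 5.4]{GigTam18} to get $\rho_t^T\stackrel{*}{\rightharpoonup}\rho_t^0$ in $L^\infty(\mm)$ for each $t$; (ii) upgrade this to setwise convergence $(\rho_t^T\mm)(B)\to(\rho_t^0\mm)(B)$ for every Borel $B$ via the uniform Gaussian decay \eqref{eq:exponential decrease} from \cite[Lemma 4.2]{GigTam18} --- this is precisely where $\CD(\kappa,N)$ enters, through the availability and integrability of that bound, not through propagation of Fisher estimates; (iii) invoke the joint lower semicontinuity of the Fisher information under setwise convergence (\cite[Theorem 7.6]{AmbrosioGigliSavare11}) for each $t$, and conclude by Fatou in $t$. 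Without step (ii)--(iii) or a substitute for them, your argument for \eqref{cost taylor} does not close; the remainder of your outline (continuity of $f'$ at $0$, identification of $f''(0)=\tfrac14 I^0$ from the difference quotient of $f'$) is correct once $I^T\to I^0$ is in hand.
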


Note, for instance, that the $\CD(\kappa,N)$ condition is satisfied when $\mm$ is the volume measure.

\subsection{Long time behavior of the mean field entropic cost}

In this section we move from the Riemannian to the Euclidean setting and turn our attention to the Mean Field Schr\"odinger problem (henceforth MFSP) recently introduced in \cite{backhoff2019mean}. Intuitively, this problem models the well-known thought experiment proposed by Schr\"odinger \cite{Schrodinger31,Schrodinger32} in the case when the underlying particles are not independent but interact through a pair potential $W$. An important difference with the classical case, is that MFSP cannot be reduced to a static problem. Therefore it is formulated for probability measures on the space of continuous paths $C([0,T];\RD)$, that we equip with the Borel $\sigma$-algebra generated by the norm $\|\cdot \|_{\infty}$. As usual, we also consider the canonical filtration on $C([0,T];\RD)$ and denote $(X_t)_{t\in[0,T]}$ the canonical process. MFSP is then stated as
\be
\label{nonlinearSP}\tag{MFSP}
\inf \Big\{\cH(\rmP\,|\,\Gamma(\rmP) ) \,:\,\,  \rmP \in \cP_{1}(C([0,T];\RD)), \,\rmP_0=\mu,\, \rmP_T=\nu\Big\}
\ee
where $\cP_1(C([0,T];\RD))$ denotes the space of probability measures over $C([0,T];\RD)$ with finite first moment w.r.t.\ $\|\cdot\|_\infty$ and, for any $\rmP \in \cP_1(C([0,T];\RD))$, $\rmP_0$ (resp.\ $\rmP_T$) denotes its marginal law at time $0$ (resp.\ $T$), whereas $\Gamma(\rmP)$ is the law of the controlled SDE
$$
\left\{ 
\begin{array}{rrl}
     \De Z_t&=& -\nabla W \ast\rmP_t (Z_t)\De t+\De B_t , \\
     Z_0&\sim& \mu.
\end{array}
\right .
$$
In the above we denoted by $\ast$ the usual convolution operator. The interaction potential $W$ satisfies the assumptions
\begin{equation}
\label{boundedhess}\tag{H'1}
\begin{split}
\text{$W$ is of class $C^{2}(\RD;\R)$ and symmetric, i.e. $W(x)=W(-x)$ for all $x \in \RD$}, \\
\sup_{z,v \in \mathbb{R}^d, |v|=1}  v\cdot \nabla^2W(z) \cdot v  < +\infty.
\end{split}
\end{equation}
The role of the functional $\cH(\cdot\,|\,\mm)$ is now taken by the functional $\cF:\cP_2(\RD)\rightarrow\R$, which is obtained by shifting by a constant the functional $\tilde{\cF}$ defined by
\be
\label{eq def F}
\tilde{\mathcal{F}}(\sigma) = \begin{cases} \displaystyle{\int_{\RD} \log\Big(\frac{\De \sigma}{\De\lambda}\Big)\, \De\sigma + \int_{\RD} W \ast \frac{\De\sigma}{\De\lambda}\, \De\sigma}, \quad & \mbox{if $\sigma \ll \leb $}\\
+\infty, \quad & \mbox{otherwise}
\end{cases}
\ee
where $\leb$ denotes the Lebesgue measure on $\RD$, in such a way that 
\[
\inf_{\sigma \in\cP_2(\RD) : \int x \sigma(\De x) = \int x \mu(\De x)} \cF(\sigma) = 0.
\]
In \cite{backhoff2019mean} the mean field entropic cost is defined as the optimal value in \eqref{nonlinearSP}. However, if $W=0$ this definition does not give back the usual entropic cost $\cost$ and the reason is simply the following: in the large deviations formulation of \eqref{cost def} particles are sampled with initial distribution the invariant measure $\mm$, whereas in MFSP particles are sampled according to $\mu$. For this reason, and to strengthen the analogy with \eqref{cost def}, we prefer to define the mean field entropic cost $\mfcost$ as
\be
\label{eq:mfcost}
\mfcost(\mu,\nu) := \cF(\mu)+ \cH(\rmP^* | \Gamma(\rmP^*)), \quad  \rmP^* \textrm{ optimal in \eqref{nonlinearSP}.} 
\ee
With this definition we recover $\cost(\mu,\nu)$ defined in \eqref{cost def} when $W=0$.

After this premise, the following plays the same role of assumption \eqref{hyp pot}
\be\label{W convexity ass}\tag{H'2}
\exists \kappa > 0 \quad \textrm{s.t.} \quad  \forall z\in\RD, \quad 2\nabla^2 W(z) \geq \kappa \mathbb{I}_{d\times d},
\ee
while regarding the marginal constraints $\mu$ and $\nu$, we assume that they have finite second moment, they belong to the domain of $\cF$ and have the same barycenter, i.e.\
\be
\label{same mean ass}\tag{H'3} 
\mu,\nu \in \cP_2(\RD),\quad \cF(\mu),\cF(\nu) < +\infty,\quad \int_{\RD} x\, \mu (\De x) =  \int_{\RD} x\, \nu (\De x) .
\ee
The proof we gave of Theorem \ref{thm:derivcost} is hard to replicate for MFSP essentially because uniqueness of optimizers is currently not known. However, the long-time behavior of the mean field entropic cost can still be studied and exponential rate of convergence can be derived as well.
 
\begin{theorem}\label{thm meanfieldlongtime}
Assume \eqref{boundedhess}-\eqref{same mean ass} and that $\cF(\mu),\cF(\nu)<+\infty$. Then:
\bei 
\item[(i)] it holds
 \bes
\lim_{T \to \infty} \mfcost(\mu,\nu) = \cF(\mu)+\cF(\nu);
 \ees
\item[(ii)] the rate of convergence is at least $\exp(-\kappa T)$, i.e.\ there exists a decreasing function $B(\cdot)$  such that
\be\label{exp entropy bound} |\mfcost(\mu,\nu)-\cF(\mu)-\cF(\nu)|\leq B(\kappa)(\cF(\mu) +\cF(\mu)) \exp(-\kappa  T/2) 
\ee
uniformly in $T \geq 1$. 
\eei
\end{theorem}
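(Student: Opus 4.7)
The plan is to mirror the structure of Theorem~\ref{thm cost bound}, while working around the absence of a differentiation formula analogous to Theorem~\ref{thm:derivcost} -- such a formula is blocked by the unresolved uniqueness issue for \eqref{nonlinearSP}. The two main tools I will rely on are the mean field counterparts of the entropic Talagrand and of the energy-transport inequalities established in \cite{backhoff2019mean}. Throughout, let $\mu_\infty$ denote the unique minimizer of $\cF$ over measures with the prescribed barycenter, whose existence and uniqueness are guaranteed by \eqref{W convexity ass}.

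For part (i), I would establish matching upper and lower bounds. The upper bound $\mfcost(\mu,\nu) \le \cF(\mu)+\cF(\nu)+o(1)$ is produced by explicit construction: pick an admissible $\rmP$ that evolves $\mu$ under the free McKean--Vlasov flow for a long initial interval of length $T-2$ -- by \eqref{W convexity ass} this relaxes its marginal exponentially fast to $\mu_\infty$ at zero entropy cost -- then glue a short Schr\"odinger-type bridge of unit length that drives the current marginal to $\nu$. The extra relative entropy of the glued pieces is controlled, up to vanishing remainders, by $\cF(\mu)+\cF(\nu)$ via the mean field Talagrand inequality. For the lower bound, I would use convexity of $\cH(\cdot\,|\,\Gamma(\cdot))$ together with the fact that the ``free'' cost of driving $\mu$ along the McKean--Vlasov flow toward $\mu_\infty$ contributes $\cF(\mu)$, and symmetrically $\cF(\nu)$ when reading the bridge in reverse time from $\nu$.

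For part (ii), three ingredients combine into the exponential bound. First, the mean field Talagrand inequality gives a quadratic control of a Wasserstein-type distance to $\mu_\infty$ by $\cF$. Second, the $\kappa$-convexity in \eqref{W convexity ass} implies exponential contraction of the free McKean--Vlasov flow in $W_2$ with rate $\kappa/2$, hence a factor $\exp(-\kappa T/2)$ after running the dynamics for time $T$. Third, the mean field energy-transport inequality translates this geometric contraction back into a bound on $\mfcost(\mu,\nu) - \cF(\mu) - \cF(\nu)$, producing the constant $B(\kappa)$ in \eqref{exp entropy bound}; since the classical energy conservation along optimal flows plays no direct role here, the bound is obtained by comparing the optimizer of \eqref{nonlinearSP} with the explicit competitor built in part (i) and invoking near-optimality estimates.

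The main obstacle is precisely the gluing step underlying both (i) and (ii): since $\Gamma(\rmP)$ depends nonlinearly on $\rmP$ itself, the Markovian concatenation argument of the classical case does not apply verbatim, and showing that the concatenated path has finite $\cH(\cdot\,|\,\Gamma(\cdot))$ with sharp control requires absorbing the mismatch between the relaxed marginal at time $T-2$ and $\mu_\infty$ via the mean field Talagrand inequality. Making this error quantitative -- rather than merely $o(1)$ -- is the crucial point that upgrades (i) into the explicit exponential rate in (ii), and is where the $\kappa$-dependent constant $B(\kappa)$ is ultimately extracted.
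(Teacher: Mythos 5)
There is a genuine gap, and it sits exactly where you locate it yourself: the gluing step. The paper does not construct a competitor at all. Its proof rests on an exact identity obtained by time reversal: if $\rmP$ is optimal for \eqref{nonlinearSP} and $\hat\rmP$ is its time reversal (optimal for the reversed problem by \cite[Lemma 4.5]{backhoff2019mean}), then
\begin{equation*}
\mfcost(\mu,\nu) = \cF(\mu)+\cF(\nu) + \frac12\bbE_{\rmP}\Big[\int_0^{T/2}|\Psi_t(X_t)|^2\,\De t\Big] + \frac12\bbE_{\hat\rmP}\Big[\int_0^{T/2}|\hat\Psi_t(X_t)|^2\,\De t\Big] - \cF(\rmP_{T/2}),
\end{equation*}
which immediately sandwiches $\mfcost(\mu,\nu)-\cF(\mu)-\cF(\nu)$ between $-\cF(\rmP_{T/2})$ and the two kinetic terms; the latter are controlled by the mean field energy-transport-type estimate \cite[Lemma 4.3]{backhoff2019mean} plus the mean field Talagrand inequality at $t=1/2$, while $\cF(\rmP_{T/2})$ is controlled by the turnpike estimate \cite[Theorem 1.4]{backhoff2019mean}. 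Your competitor route founders on the nonlinearity you flag: for a concatenated path measure $\rmP$, the reference $\Gamma(\rmP)$ is the law of a McKean--Vlasov SDE whose drift involves the marginals of the \emph{whole} concatenation, so the ``free flow'' segment does not have zero cost relative to $\Gamma(\rmP)$, and there is no Markovian additivity of $\cH(\cdot\,|\,\Gamma(\cdot))$ to quantify the mismatch. You acknowledge this but do not resolve it, so neither the upper bound in (i) nor the rate in (ii) is actually established.

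Separately, your lower bound is not a proof. ``Convexity of $\cH(\cdot\,|\,\Gamma(\cdot))$'' is not available (again because $\Gamma$ depends on its argument), and the assertion that the free cost ``contributes $\cF(\mu)$'' is precisely what needs proving. In the paper the lower bound is the inequality $\mfcost(\mu,\nu)-\cF(\mu)-\cF(\nu)\geq -\cF(\rmP_{T/2})$ coming from the identity above, and the whole content is that $\cF(\rmP_{T/2})$ decays exponentially --- the turnpike property --- which your sketch never invokes. You correctly identify the two families of functional inequalities that must enter (Talagrand and energy-transport, in their mean field forms), but without the time-reversal decomposition of the cost there is no quantity to which they can be applied, and the argument does not close.
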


\begin{remark}
{\rm
One could be more precise in the above statement and get that 
\beas
(\cF(\mu)+\cF(\nu))\frac{\sinh(\kappa T)}{\sinh(\kappa T/2)} +  2\frac{\exp(\kappa T/2)}{(\exp(\kappa T/2) -1)^2}\Big(\cF(\mu)\cF(\nu) + \delta \exp(-\kappa T/2) \Big)^{1/2} \\
\leq \mfcost(\mu,\nu) - \cF(\mu)+\cF(\nu) \leq \frac{1}{\exp(\kappa T/2)-1 }(\cF(\mu)+\cF(\nu))
\eeas
where $\delta = \cF(\mu)^2 + \cF(\nu)^2 + \exp(-\kappa T/2) \cF(\mu)\cF(\nu)$. For sake of clarity, we prefer the more compact, although slightly less precise, formulation given above.\fr
}
\end{remark}

\begin{remark}\label{rem:factor2}
{\rm 
It is important to stress that in \cite{backhoff2019mean} the interaction potential $W$ satisfies $\nabla^2 W(z) \geq \kappa \mathbb{I}_{d\times d}$ instead of \eqref{W convexity ass}. We prefer to work with the latter condition rather than with the former because in such a way the convergence rate \eqref{exp entropy bound} is consistent with \eqref{long time conv bound}. As a flip side, when addressed to \cite{backhoff2019mean} the reader has to keep in mind this difference.\fr
}
\end{remark}

\section{Preliminaries}\label{sec:preliminaries}

In this section we collect some useful results concerning Markov semigroups and Schr\"odinger problem, either already present in the literature or extended to our framework.

\paragraph{Markov semigroups and heat kernels.} It is well known that there is a Markov semigroup $\sfP_t$ associated to the SDE \eqref{SDE}, which has $\sfL = \Delta/2 - \nabla U\cdot\nabla$ as generator and $\mm(\De x) = \exp(-2U(x))\mathrm{vol}(\De x)$ as invariant measure. We address the reader to \cite{BakryGentilLedoux14} for a comprehensive discussion on the topic. However, it is worth stressing that in \cite{BakryGentilLedoux14} a different convention is adopted and the generator of $\sfP_t$ is $2\sfL = \Delta - 2\nabla U\cdot \nabla$; as a consequence, there exists a discrepancy in the factors appearing in the estimates \eqref{eq:apriori}, \eqref{eq:bakry-emery}, \eqref{eq:lip reg}, \eqref{eq:hamilton}, \eqref{eq:upperbound}, \eqref{eq:lowerbound} below and the corresponding estimates in \cite{BakryGentilLedoux14}.

With this remark in mind, let us present all the useful information about $\sfP_t$. First of all, it enjoys the following standard a priori estimate
\begin{equation}
\label{eq:apriori}
\|\sfL\sfP_t f\|_{L^2(\mm)} \leq \frac{2}{t^2}\|f\|_{L^2(\mm)}, \qquad \forall t>0,\,\forall f \in L^2(\mm),
\end{equation}
which can be obtained by differentiating $t \mapsto \||\nabla\sfP_t f|\|^2_{L^2(\mm)}$. The semigroup is also ergodic. This means that:
\begin{itemize}
    \item[(i)] if $\mm(M) = 1$, then for all $f \in L^2(\mm)$ it holds
    \begin{equation}\label{eq:ergodicity}
        \lim_{t \to \infty}\sfP_t f = \int f\,\De\mm, \qquad \text{in } L^2(\mm),
    \end{equation}
    \item[(ii)] if $\mm(M) = \infty$, then $\sfP_t f \to 0$ in $L^2(\mm)$ for all $f \in L^2(\mm)$.
\end{itemize}
The curvature assumption \eqref{hyp pot} then yields many important consequences, the first of which is the Bakry-\'Emery commutation estimate
\begin{equation}\label{eq:bakry-emery}
    |\nabla \sfP_t u|^2 \leq e^{-\kappa t}\sfP_t(|\nabla u|^2), \qquad \forall u \in C^\infty_c(M),\, \forall t \geq 0.
\end{equation}
For its proof as well as for all the regularizing properties of $\sfP_t$ that will be used throughout the paper, we address once more the reader to \cite{BakryGentilLedoux14}. Secondly, $\sfP_t$ enjoys an $L^\infty$-Lipschitz regularization (see \cite{Bakry06}), namely for all $u \in L^\infty(\mm)$ it holds
\begin{equation}\label{eq:lip reg}
    \|\nabla \sfP_t u\|_{L^\infty(\mm)} \leq \frac{1}{\sqrt{I_\kappa(t)}}\|u\|_{L^\infty(\mm)} \quad \text{with} \quad I_\kappa(t) := \frac{\exp(\kappa t)-1}{\kappa} , \qquad \forall t > 0.
\end{equation}
Then let us recall that under \eqref{hyp pot} Hamilton's gradient estimate is satisfied (see \cite{Kotschwar07}): for any positive function $u \in L^p \cap L^\infty(\mm)$ for some $p \in [1,\infty)$ it holds
\begin{equation}
\label{eq:hamilton}
t|\nabla\log\sfP_t u|^2 \leq 2(1+\kappa^- t)\log\Big(\frac{\|u\|_{L^\infty(\mm)}}{\sfP_t u}\Big), \qquad \forall t>0
\end{equation}
pointwise, where $\kappa^- := \max\{0,-\kappa\}$. Within our framework it is also well known (see for instance \cite{Grigoryan09}) that there exists a unique kernel (transition probability) $\hp_t(x,y)$ representing $\sfP_t$ in the following sense:
\begin{equation}\label{eq:heat representation}
\sfP_t u(x) = \int u(y)\hp_t(x,y)\mm(\De y), \qquad \forall u \in L^\infty(\mm).
\end{equation}
The function $\hp_t$ can also be seen as the density of $R_{0,T}$ (the joint law at time 0 and $T$ of the solution to \eqref{SDE}) w.r.t.\ $\mm \otimes \mm$; it is a smooth function on $(0,\infty) \times M \times M$ and the second-named author recently proved in \cite{Tamanini19} that in great generality (and in particular under the assumption \eqref{hyp pot}) the following upper Gaussian estimate for the kernel holds
\begin{equation}\label{eq:upperbound}
\hp_t(x,y) \leq \frac{1}{\sqrt{\mm(B_{\sqrt{t/2}}(x))\mm(B_{\sqrt{t/2}}(y))}}\exp\Big(C_\varepsilon(1+C_\kappa t) - \frac{\sfd^2(x,y)}{(2+\varepsilon)t}\Big)
\end{equation}
for all $t>0$, $x,y \in M$ and $\varepsilon > 0$, where $C_\kappa$ can be chosen equal to 0 if $\kappa \geq 0$ in \eqref{hyp pot}. As concerns Gaussian lower bounds, if \eqref{hyp ref} holds then by \cite[Corollary 1.3]{Wang11} the following is satisfied
\begin{equation}\label{eq:lowerbound}
\hp_t(x,y) \geq \exp\Big(-\frac{\kappa\sfd^2(x,y)}{2(e^{\kappa t/2}-1)}\Big)
\end{equation}
for all $t>0$ and $x,y \in M$.

\paragraph{The relative entropy functional.} Let us first recall the definition of the relative entropy functional in the case of a reference measure with possibly infinite mass (see \cite{Leonard14b} for more details). Given a $\sigma$-finite measure $q$ on $M$, there exists a measurable function $W : M \to [0,\infty)$ such that
\[
z_W := \int e^{-W}\De q < +\infty.
\]
Introducing the probability measure $q_W := z_W^{-1}e^{-W}q$, for any $p \in \cP(M)$ such that $\int W\De p < +\infty$ the Boltzmann-Shannon entropy is defined as
\[
\cH(p\,|\,q) := \cH(p\,|\,q_W) - \int W\De p - \log z_W
\]
where $\cH(p\,|\,q_W)$ is in turn defined as in \eqref{eq:entropy}. Notice that Jensen's inequality and the fact that $q_W \in \cP(M)$ grant that $\cH(p\,|\,q_W)$ is well defined and non-negative, in particular the definition makes sense. The definition is also meaningful, because if $\int W'\De p < +\infty$ for another function $W'$ such that $z_{W'} < +\infty$, then
\[
\cH(p\,|\,q_W) - \int W\De p - \log z_W = \cH(p\,|\,q_{W'}) - \int W'\De p - \log z_{W'}.
\]
Hence $\cH(p\,|\,q)$ is well defined for all $p \in \cP(M)$ such that $\int W\De p < +\infty$ for some non-negative measurable function $W$ with $z_W < +\infty$. In particular, it is well known that under \eqref{hyp pot} $W$ can be chosen as $C\sfd^2(\bar{x},\cdot)$ for any $\bar{x} \in M$ and $C>0$ sufficiently large (see for instance \cite[Theorem 4.26]{Sturm06I}).

\paragraph{The $(f,g)$-decomposition.} It is important to stress that solving \eqref{cost def} is equivalent to finding non-negative Borel functions $f^T,g^T$, also called decomposition, such that
\begin{equation}\label{eq:schr system}
\mu = f^T \sfP_T g^T\mm, \qquad \nu = g^T \sfP_T f^T \mm.
\end{equation}
This pair of equations is known as Schr\"odinger system and its solvability holds under very mild assumptions (see \cite{Leonard14}), in particular under:
\begin{itemize}
\item \eqref{hyp marginals weak}, as a consequence of \cite[Proposition 2.1]{GigTam18}, the smoothness and positivity of the heat kernel and the boundedness of $\supp(\mu),\supp(\nu)$;
\item \eqref{hyp pot},  \eqref{hyp ref} and \eqref{hyp marginals weakest}, because of \cite[Proposition 2.5]{Leonard14} and \eqref{eq:lowerbound}.
\end{itemize}
Still under \eqref{hyp pot}-\eqref{hyp marginals weak}, the couple $(f^T,g^T)$ solving \eqref{eq:schr system} is unique up to the trivial transformation $(f^T,g^T) \mapsto (cf^T,g^T/c)$ with $c>0$, as proven for instance in \cite[Proposition 2.1]{GigTam18}: this fact will play an important role in several proofs (e.g.\ the extension of \eqref{eq:k-convexity} to our setting and Lemma \ref{lem:fg unif conv}). A good feature of $f^T,g^T$ is that they inherit the regularity (smoothness and integrability) of $\rho,\sigma$, the densities of $\mu,\nu$ respectively. More precisely, 
\begin{itemize}
    \item[(a)] $f^T,g^T \in L^\infty(\mm)$ since so are $\rho,\sigma$;
    \item[(b)] if $\rho \in C^k(M)$ for some $k \in \mathbb{N} \cup \{\infty\}$ (resp.\ $\sigma$), then $f^T$ (resp.\ $g^T$) also belongs to $C^k(M)$.
\end{itemize}
A proof of this property can be found in \cite[Proposition 2.7]{GLRT19}.

About point (a) a more quantitative statement is actually possible. In \cite[Proposition 2.1]{GigTam18} the second-named author proved that for all $\mu,\nu$ as in \eqref{hyp marginals weak}, the following integral bounds hold for the decomposition $(f^T,g^T)$
\[
\|f^T\|_{L^\infty(\mm)}\|g^T\|_{L^1(\mm)} \leq \frac{\|\rho\|_{L^\infty(\mm)}}{c_T}, \qquad \|g^T\|_{L^\infty(\mm)}\|f^T\|_{L^1(\mm)} \leq \frac{\|\sigma\|_{L^\infty(\mm)}}{c_T},
\]
where $c_T$ is a suitable positive constant. If we normalize $g^T$ in such a way that 
\begin{equation}\label{eq:gauge}
    \|g^T\|_{L^1(\mm)} = 1, \qquad \forall T > 0,
\end{equation}
and from now on this choice will always be done, then the bounds above become
\[
\|f^T\|_{L^\infty(\mm)} \leq \frac{\|\rho\|_{L^\infty(\mm)}}{c_T}, \qquad \|g^T\|_{L^\infty(\mm)}\|f^T\|_{L^1(\mm)} \leq \frac{\|\sigma\|_{L^\infty(\mm)}}{c_T}
\]
Aim of the next lemma is to improve this result by showing that the same kind of bounds holds when $T$ ranges in a compact subset of $(0,\infty)$ or even closed half-line if we also assume \eqref{hyp ref}.

\begin{lemma}\label{lem:fg bounds}
Given \eqref{hyp pot} and $\mu,\nu$ as in \eqref{hyp marginals weak}, the following hold:
\begin{itemize}
\item[(i)] for all $0<T_0<T_1<\infty$ there exists $c_{T_0,T_1}>0$ such that
\begin{equation}\label{eq:fg local bounds}
\|f^T\|_{L^\infty(\mm)} \leq \frac{\|\rho\|_{L^\infty(\mm)}}{c_{T_0,T_1}}, \quad \|g^T\|_{L^\infty(\mm)}\|f^T\|_{L^1(\mm)} \leq \frac{\|\sigma\|_{L^\infty(\mm)}}{c_{T_0,T_1}}, \qquad \forall T \in [T_0,T_1].
\end{equation}
\item[(ii)] if \eqref{hyp ref} is satisfied, then for all $T_0>0$ there exists $c_{T_0}>0$ such that
\begin{equation}\label{eq:fg infinity bounds}
\|f^T\|_{L^\infty(\mm)} \leq \frac{\|\rho\|_{L^\infty(\mm)}}{c_{T_0}}, \quad \|g^T\|_{L^\infty(\mm)}\|f^T\|_{L^1(\mm)} \leq \frac{\|\sigma\|_{L^\infty(\mm)}}{c_{T_0}}, \qquad \forall T \geq T_0.
\end{equation}
\end{itemize}
\end{lemma}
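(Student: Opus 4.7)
The plan is to follow the strategy of \cite[Proposition 2.1]{GigTam18}, which already proves the analogous bound at a single $T$, and to track carefully how the constant they produce depends on $T$. The key preliminary observation is that, since the heat kernel $\hp_T$ is strictly positive on $M\times M$ for every $T>0$, both $\sfP_T f^T$ and $\sfP_T g^T$ are strictly positive on $M$, so the Schr\"odinger system \eqref{eq:schr system} forces $\supp(f^T)\subseteq K_\mu:=\supp(\mu)$ and $\supp(g^T)\subseteq K_\nu:=\supp(\nu)$, together with
\[
f^T=\frac{\rho}{\sfP_T g^T}\ \ \mm\text{-a.e.\ on }K_\mu, \qquad g^T=\frac{\sigma}{\sfP_T f^T}\ \ \mm\text{-a.e.\ on }K_\nu.
\]
Thus the whole task reduces to lower bounding $\sfP_Tg^T$ on $K_\mu$ and $\sfP_Tf^T$ on $K_\nu$, uniformly in $T$.

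For part (i), I would exploit that $(T,x,y)\mapsto\hp_T(x,y)$ is continuous and strictly positive on the compact set $[T_0,T_1]\times K_\mu\times K_\nu$, so its infimum $c_{T_0,T_1}>0$. Using $\supp(g^T)\subseteq K_\nu$ together with the normalization \eqref{eq:gauge},
\[
\sfP_T g^T(x)=\int_{K_\nu}\hp_T(x,y)\,g^T(y)\,\mm(\De y)\geq c_{T_0,T_1}\|g^T\|_{L^1(\mm)}=c_{T_0,T_1} \qquad \forall\, x\in K_\mu,
\]
which gives the first inequality in \eqref{eq:fg local bounds}. A symmetric computation using $\supp(f^T)\subseteq K_\mu$ yields $\sfP_T f^T(x)\geq c_{T_0,T_1}\|f^T\|_{L^1(\mm)}$ for $x\in K_\nu$, whence after rearrangement the second inequality.

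For part (ii), compactness of $[T_0,\infty)$ fails, so I would replace continuity with the explicit Gaussian lower bound \eqref{eq:lowerbound} (this is where \eqref{hyp ref} is needed). Setting $D:=\mathrm{diam}(K_\mu\cup K_\nu)<\infty$, it gives
\[
\hp_T(x,y)\geq\exp\Bigl(-\frac{\kappa D^2}{2(e^{\kappa T/2}-1)}\Bigr)\qquad \forall\, x\in K_\mu,\,y\in K_\nu,\,T>0.
\]
A direct derivative computation shows that $T\mapsto \kappa/(2(e^{\kappa T/2}-1))$ (interpreted as $1/T$ if $\kappa=0$) is positive and strictly decreasing on $(0,\infty)$ for every sign of $\kappa$, with limit $\max\{0,-\kappa\}/2$ at $T=\infty$. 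Hence on $[T_0,\infty)$ the exponent is bounded and the right hand side is at least $c_{T_0}:=\exp(-\kappa D^2/(2(e^{\kappa T_0/2}-1)))>0$. Plugging this into the argument of part (i) produces \eqref{eq:fg infinity bounds}.

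The only genuine subtlety is the monotonicity/sign analysis in part (ii) ensuring that the exponent in \eqref{eq:lowerbound} does not blow up as $T\to\infty$ when $\kappa\leq 0$; the rest of the argument is a rewriting of \cite[Proposition 2.1]{GigTam18} with attention to the dependence of the constants on $T$.
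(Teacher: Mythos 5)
Your argument is correct and coincides with the paper's proof: part (i) via continuity and strict positivity of $(T,x,y)\mapsto\hp_T(x,y)$ on the compact set $[T_0,T_1]\times\supp(\mu)\times\supp(\nu)$, and part (ii) via the Gaussian lower bound \eqref{eq:lowerbound}, combined in both cases with the Schr\"odinger system, the compact supports of $f^T,g^T$ and the normalization \eqref{eq:gauge}. Your extra monotonicity analysis of the exponent in \eqref{eq:lowerbound} is a correct (and slightly more explicit) justification of the step the paper leaves implicit.
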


\begin{proof}
The first equation in the Schr\"odinger system \eqref{eq:schr system} and the representation formula \eqref{eq:heat representation} entail
\[
f^T(x)\int g^T(y) \hp_T(x,y)\De\mm(y) = \rho(x), \qquad \textrm{for }\mm\textrm{-a.e. }x.
\]
As $\hp_T(x,y)$ is smooth in $T \in (0,\infty)$ and $x,y \in M$ and $f^T,g^T$ are uniformly compactly supported (since they are supported in $\supp(\mu)$ and $\supp(\nu)$ respectively) we deduce that there exists $c_{T_0,T_1}>0$ such that $\hp_T(x,y) \geq c_{T_0,T_1}$ in $\supp(\mu) \times \supp(\nu)$ for all $T \in [T_0,T_1]$, whence
\[
f^T = f^T\|g^T\|_{L^1(\mm)} \leq \frac{\|\rho\|_{L^\infty(\mm)}}{c_{T_0,T_1}}, \qquad \mm\textrm{-a.e. in } \supp(\mu)
\]
provided $T \in [T_0,T_1]$, and this proves the first inequality in \eqref{eq:fg local bounds}. For the second one it is sufficient to swap the roles of $f^T$ and $g^T$. 
If we further assume \eqref{hyp ref}, then the Gaussian lower bound \eqref{eq:lowerbound} holds. Hence for all $T_0>0$ there exists $c_{T_0}>0$ such that $\hp_T(x,y) \geq c_{T_0}$ in $\supp(\mu) \times \supp(\nu)$ for all $T \geq T_0$, whence by the same argument as above the first inequality in \eqref{eq:fg infinity bounds} follows. By interchanging the roles of $f^T$ and $g^T$, the same conclusion follows for $g^T$.
\end{proof}

\paragraph{The Benamou-Brenier formulation.} The equivalence between \eqref{eq:schr system} and \eqref{cost def} is extremely fruitful, as it enables to fully describe the optimal pair $(\bar{\rho}^T,\bar{v}^T)$ in the fluid-dynamical description \eqref{eq:bbs} of the entropic cost. Indeed,
\[
\bar{\rho}_t^T = \sfP_t f^T \sfP_{T-t}g^T, \quad \bar{v}_t^T = \frac{1}{2}\nabla\log \sfP_{T-t}g^T - \frac{1}{2}\nabla\log \sfP_t f^T, \qquad t \in [0,T].
\]
Actually, \eqref{eq:bbs} and the identities above are nothing but a reparametrization of the Benamou-Brenier-like formula for the entropic cost established in \cite[Theorem 4.2]{GigTam19} and \cite{Tamanini20}, which reads as
\begin{equation}\label{eq:bbs2}
T\cost(\mu,\nu) = \frac{T}{2}\Big(\ent(\mu\,|\,\mm) + \ent(\nu\,|\,\mm)\Big) + \inf_{(\rho,v)}\iint_0^1 \Big(\frac12|v_t|^2 + \frac{T^2}{8}|\nabla\log\rho_t|^2\Big) \rho_t\De t\De\mm,
\end{equation}
and of
\[
\rho_t^T := \sfP_{Tt}f^T \sfP_{T(1-t)}g^T, \quad v_t^T := \frac{T}{2}\nabla\log \sfP_{T(1-t)}g^T - \frac{T}{2}\nabla\log \sfP_{Tt}f^T, \qquad t \in [0,1].
\]
In other words
\begin{equation}\label{eq:reparametrization}
    \rho_t^T = \bar{\rho}_{Tt}^T \quad \text{and} \quad v_t^T = T\bar{v}_{Tt}^T.
\end{equation}
In \eqref{eq:bbs2} the infimum runs over all weak solutions of the continuity equation \eqref{eq:conteq} satisfying the marginal constraints $\rho_0\mm = \mu$ and $\rho_1\mm = \nu$, while $(\rho^T,v^T)$ defined above is the unique optimal density-velocity couple for \eqref{eq:bbs2}. It is also useful to see the second term in the right-hand side of \eqref{eq:bbs2} as an action functional, whose arguments are the curves $(\rho_t)$ and $(v_t)$, i.e.
\[
\mathscr{A}_T(\rho,v) := \iint_0^1 \Big(\frac12|v_t|^2 + \frac{T^2}{8}|\nabla\log\rho_t|^2\Big) \rho_t\De t\De\mm.
\]
When $T=0$, $\mathscr{A}_T$ is a purely kinetic energy and by the well-known Benamou-Brenier formulation of the optimal transport problem \cite{BenamouBrenier00}, 
\begin{equation}
\label{eq:benamou-brenier}
\frac{1}{2}W_2^2(\mu,\nu) = \inf_{(\rho,v)} \mathscr{A}_0(\rho,v),
\end{equation}
where the infimum is taken over the same set as in \eqref{eq:bbs2}. In this case optimal couples shall be denoted by $(\rho^0,v^0)$. Let us also recall that
\begin{equation}\label{eq:cost representations}
\begin{split}
\cost(\mu,\nu) & = \ent(\mu\,|\,\mm) + \frac{T}{2} \iint_0^1 |\nabla\log \sfP_{T(1-t)}g^T|^2\rho_t^T\,\De t\De\mm \\
& = \ent(\nu\,|\,\mm) + \frac{T}{2}\iint_0^1 |\nabla\log \sfP_{Tt} f^T|^2\rho_t^T\,\De t\De\mm,
\end{split}
\end{equation}
which trivially implies
\begin{equation}\label{eq:finiteness}
    \iint_0^1 |\nabla\log \sfP_{Tt} f^T|^2\rho_t^T\,\De t\De\mm, \, \iint_0^1 |\nabla\log \sfP_{T(1-t)}g^T|^2\rho_t^T\,\De t\De\mm < \infty
\end{equation}
Finally, the expression of the conserved quantity $\cons(\mu,\nu)$ in terms of the rescaled optimal couple $(\rho^T,v^T)$ reads as
\begin{equation}
\label{eq:energy representation}
\begin{split}
\cons(\mu,\nu) & = \frac{1}{2T^2} \int |v^T_t|^2 \rho^T_t \De \mm - \frac18 \int |\nabla \log \rho^T_t|^2  \rho^T_t \De \mm \\
& = -\frac{1}{2}\int \nabla\log \sfP_{T(1-t)}g^T \cdot \nabla\log \sfP_{Tt} f^T\,\rho_t^T\,\De\mm.
\end{split}
\end{equation}

\paragraph{Dual formulation.} For future reference, it is also worth mentioning the fact that, for all $\mu,\nu$ satisfying \eqref{hyp marginals weak}, the (rescaled) entropic cost admits the following dual representation (see \cite{GigTam19,Tamanini20})
\begin{equation}
\label{eq:dual representation}
T\cost(\mu,\nu) = T\cH(\mu\,|\,\mm) + \sup_{\phi \in C_b(M)}\bigg\{\int \sfQ_1^T\phi\,\De\mu - \int\phi\,\De\nu \bigg\},
\end{equation}
where $Q_1^T\phi := -T\log\sfP_T(\exp(-\phi/T))$.

\paragraph{Geometric information.} As regards the relationship between Schr\"odinger problem and lower Ricci bounds, let us recall that in \cite{Conforti17} the first-named author showed that \eqref{hyp pot} implies the following distorted $\kappa$-convexity inequality of the entropy along entropic interpolations:
\begin{equation}\label{eq:k-convexity}
\begin{split}
   \ent(\mu_t^T\,|\,\mm) & \leq \frac{1-\exp(-\kappa T(1-t))}{1-\exp(-\kappa T)}\ent(\mu\,|\,\mm) + \frac{1-\exp(-\kappa Tt)}{1-\exp(-\kappa T)}\ent(\nu\,|\,\mm) \\
   & \qquad - \frac{\cosh(\kappa T/2) - \cosh(\kappa T(t-1/2))}{\sinh(\kappa T/2)}\cost(\mu,\nu),
\end{split}
\end{equation}
for all $t \in [0,1]$, where $\mu_t^T := \rho_t^T\mm$. Truth to be told, the result in \cite{Conforti17} was stated in the framework of compact Riemannian manifolds satisfying \eqref{hyp pot} with $\kappa>0$ and endowed with the volume measure, but the proof can be adapted to our setting when \eqref{hyp ref} holds, by relying on \cite[Lemma 3.7]{GLRT19}. Indeed, if we set
\begin{equation}\label{eq:delta definitions}
\begin{split}
\rho_t^{T,\delta} := c_\delta(\sfP_{Tt}f^T+\delta)(\sfP_{T(1-t)}g^T + \delta), \qquad \mu_t^{T,\delta} := \rho_t^{T,\delta}\mm, \\ v_t^{T,\delta} := \frac{T}{2}\nabla\log(\sfP_{T(1-t)}g^T + \delta) - \frac{T}{2}\nabla\log(\sfP_{Tt}f^T+\delta),
\end{split}
\end{equation}
where $c_\delta$ is a normalization constant so that $\rho_t^{T,\delta}$ is still a probability density, then the identities
\[
\begin{split}
& \partial_t\log(\sfP_{Tt}f^T + \delta) = \frac{T}{2}|\nabla\log(\sfP_{Tt}f^T + \delta)|^2 + T\sfL\log(\sfP_{Tt}f^T + \delta) \\
& \partial_t\log(\sfP_{T(1-t)}g^T + \delta) = \frac{T}{2}|\nabla\log(\sfP_{T(1-t)}g^T + \delta)|^2 + T\sfL\log(\sfP_{T(1-t)}g^T + \delta) \\
& \partial_t\rho_t^{T,\delta} + {\rm div}_\mm(v_t^{T,\delta}\rho_t^{T,\delta}) = 0
\end{split}
\]
hold both in the classical sense and as strong $W^{1,2}$-limits, where $\sfL := \Delta/2 - \nabla U \cdot \nabla$ is the generator of $(\sfP_t)$. Together with \eqref{eq:lip reg} and \eqref{eq:hamilton}, this is sufficient to follow the lines of \cite[Lemma 3.6, Lemma 3.7]{Conforti17} and \cite[Lemma 3.7]{GLRT19} and deduce the following

\begin{lemma}
\label{lem:fg derivatives}
Under \eqref{hyp pot}, given $\mu,\nu$ as in \eqref{hyp marginals weak} and with the same notations as in \eqref{eq:delta definitions}, for all $\delta > 0$ and $t \in [0,1]$ define
\begin{equation}\label{eq:forward backward}
h_f(t) := \int\log(\sfP_{Tt}f^T + \delta)\,\De\mu_t^{T,\delta}, \qquad h_b(t) := \int\log(\sfP_{T(1-t)}g^T + \delta)\,\De\mu_t^{T,\delta}.
\end{equation}
Then $h_f \in C([0,1]) \cap C^2((0,1])$, $h_b \in C([0,1]) \cap C^2([0,1))$,
\begin{equation}
\label{eq:first derivatives}
h_f'(t) = -\frac{T}{2}\int |\nabla\log(\sfP_{Tt}f^T + \delta)|^2\,\De\mu_t^{T,\delta}, \qquad h_b'(t) = \frac{T}{2}\int |\nabla\log(\sfP_{T(1-t)}g^T + \delta)|^2\,\De\mu_t^{T,\delta}
\end{equation}
and
\[
h_f''(t) \geq -\kappa T h_f'(t), \qquad h_b''(t) \geq \kappa T h_b'(t).
\]
\end{lemma}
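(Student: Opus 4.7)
The plan is to follow the scheme of \cite{Conforti17} and \cite[Lemma 3.7]{GLRT19}, organized around three stages: (i) regularity of $h_f,h_b$, (ii) derivation of the first-order formulas \eqref{eq:first derivatives} via the continuity equation and the Hopf-Cole identity, (iii) the convexity-type bound via Bakry-\'Emery. By symmetry (swap $f^T \leftrightarrow g^T$ and $t\leftrightarrow 1-t$), only the claims on $h_f$ need to be treated.

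For (i), note that for every $t \in (0,1]$, $\sfP_{Tt}f^T$ is smooth and bounded (it belongs to $L^\infty(\mm)$ by Lemma \ref{lem:fg bounds}), so $\log(\sfP_{Tt}f^T+\delta)$ is smooth in the spatial variable and jointly smooth in $(t,x)$ for $t\in(0,1]$ by standard parabolic regularization. The density $\rho_t^{T,\delta}$ lies in $L^1\cap L^\infty(\mm)$ uniformly in $t\in[0,1]$ (by the $L^\infty$ bounds on $f^T,g^T$ and contractivity of $\sfP_t$), and Lemma \ref{lem:fg bounds} together with the Gaussian upper bound \eqref{eq:upperbound} ensures integrability of the relevant quantities. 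Continuity of $h_f$ at $t=0$ follows by dominated convergence from $\sfP_{Tt}f^T \to f^T$ in $L^2(\mm)$.

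For (ii), I would differentiate under the integral sign: writing $\varphi_t:=\log(\sfP_{Tt}f^T+\delta)$ and $\psi_t:=\log(\sfP_{T(1-t)}g^T+\delta)$ and using the continuity equation for $\rho_t^{T,\delta}$ stated in the preamble,
\[
h_f'(t) = \int (\partial_t\varphi_t)\,\rho_t^{T,\delta}\,\De\mm + \int \nabla\varphi_t\cdot v_t^{T,\delta}\,\rho_t^{T,\delta}\,\De\mm.
\]
Substituting the Hopf-Cole identity $\partial_t\varphi_t = \frac{T}{2}|\nabla\varphi_t|^2 + T\sfL\varphi_t$ and the explicit form of $v_t^{T,\delta}=\frac{T}{2}(\nabla\psi_t-\nabla\varphi_t)$, and integrating by parts the term $T\int\sfL\varphi_t\,\rho_t^{T,\delta}\De\mm$ using $\nabla\log\rho_t^{T,\delta}=\nabla\varphi_t+\nabla\psi_t$, all cross-terms cancel and one is left with $h_f'(t)=-\frac{T}{2}\int|\nabla\varphi_t|^2\,\De\mu_t^{T,\delta}$. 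The integration by parts is legitimate because $\sfP_{Tt}f^T + \delta \geq \delta > 0$, so $\nabla\varphi_t = \nabla\sfP_{Tt}f^T/(\sfP_{Tt}f^T+\delta)$ is bounded by \eqref{eq:lip reg} and \eqref{eq:hamilton} for $t>0$, and the resulting integrands decay sufficiently fast against the Gaussian-bounded density $\rho_t^{T,\delta}$.

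For (iii), differentiating the first-derivative formula once more and using the same identities leads, after standard manipulations, to
\[
h_f''(t) + \kappa T\, h_f'(t) = T^2\int \bigl(\Gamma_2(\varphi_t)-\kappa\,\tfrac{1}{2}|\nabla\varphi_t|^2\bigr)\,\De\mu_t^{T,\delta},
\]
and the right-hand side is non-negative by the Bakry-\'Emery $\Gamma_2$ inequality, which is equivalent to \eqref{hyp pot}. The main obstacle in the whole argument is justifying the differentiations under the integral and the integrations by parts on a possibly non-compact manifold: this is handled by combining the $L^\infty$ control on $f^T,g^T$ from Lemma \ref{lem:fg bounds} with the a priori estimate \eqref{eq:apriori}, the Lipschitz regularization \eqref{eq:lip reg}, Hamilton's gradient bound \eqref{eq:hamilton}, and the Gaussian kernel estimate \eqref{eq:upperbound}. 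As indicated in the text, the $\delta$-regularization ensures that all the identities hold both pointwise and as strong $W^{1,2}$ limits, so that the computations carried out formally in \cite{Conforti17,GLRT19} carry over verbatim.
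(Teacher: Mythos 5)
Your proposal is correct and follows essentially the same route as the paper, which itself only sketches this lemma by invoking the $\delta$-regularized Hopf--Cole and continuity-equation identities (stated just before the lemma) together with \eqref{eq:lip reg} and \eqref{eq:hamilton}, and then deferring to \cite[Lemmas 3.6--3.7]{Conforti17} and \cite[Lemma 3.7]{GLRT19}; your computation of $h_f'$ via the cancellation of cross-terms and of $h_f''$ via the $\Gamma_2$-criterion is exactly the argument being referenced. The only point worth making explicit is that the integrability justifications implicitly use that $\rho_t^{T,\delta}\geq c_\delta\delta^2$ is only normalizable when $\mm$ has finite mass, i.e.\ the adaptation is carried out under \eqref{hyp ref}, as the paper notes in the paragraph preceding the lemma.
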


By \cite[Lemma 4.1]{Conforti17} and following the proof of Theorem 1.4 therein, we obtain
\[
\begin{split}
   \ent(\mu_t^{T,\delta}\,|\,\mm) & \leq \frac{1-\exp(-\kappa T(1-t))}{1-\exp(-\kappa T)}\ent(\mu_0^{T,\delta}\,|\,\mm) + \frac{1-\exp(-\kappa Tt)}{1-\exp(-\kappa T)}\ent(\mu_1^{T,\delta}\,|\,\mm) \\
   & \qquad - \frac{\cosh(\kappa T/2) - \cosh(\kappa T(t-1/2))}{\sinh(\kappa T/2)}\cost(\mu_0^{T,\delta},\mu_1^{T,\delta}).
\end{split}
\]
It is now sufficient to pass to the limit as $\delta \downarrow 0$: $\ent(\mu_t^{T,\delta}\,|\,\mm)$ converges to $\ent(\mu_t^T\,|\,\mm)$, $\ent(\mu_0^{T,\delta}\,|\,\mm)$ to $\ent(\mu\,|\,\mm)$ and $\ent(\mu_1^{T,\delta}\,|\,\mm)$ to $\ent(\nu\,|\,\mm)$ by dominated convergence theorem; as concerns $\cost(\mu_0^{T,\delta},\mu_1^{T,\delta})$, this converges to $\cost(\mu,\nu)$ for the following reason: $\tilde{f}^T := \sqrt{c_\delta}(f^T+\delta)$ and $\tilde{g}^T := \sqrt{c_\delta}(g^T+\delta)$ solve \eqref{eq:schr system} with $\mu_0^{T,\delta}$, $\mu_1^{T,\delta}$ as marginal constraints, so that
\[
\cost(\mu_0^{T,\delta},\mu_1^{T,\delta}) = \ent(\tilde{f}^T \otimes \tilde{g}^T R_{0,T}\,|\,R_{0,T})
\]
and again by dominated convergence the right-hand side above converges to 
\[
\ent(f^T \otimes g^T R_{0,T}\,|\,R_{0,T}) = \cost(\mu,\nu).
\]
Let us stress that the fact that $\tilde{f}^T$ and $\tilde{g}^T$ solve \eqref{eq:schr system} with $\mu_0^{T,\delta}$, $\mu_1^{T,\delta}$ as marginal constraints and the equivalence between SP and \eqref{eq:schr system} immediately imply that $(\rho^{T,\delta},v^{T,\delta})$ is optimal in \eqref{eq:bbs2}.

\section{Proof of the main results}\label{sec:proofs}

\subsection{A \texorpdfstring{$\Gamma$}{Gamma}-convergence proof of Theorem \ref{thm:longtime}}\label{subsec:gamma}

A rather straightforward proof of the long-time behavior of $\cost(\mu,\nu)$ can be obtained by a $\Gamma$-convergence argument. Its essence is contained in the following

\begin{lemma}\label{lem:gamma convergence}
Under assumption \eqref{hyp pot} with $\kappa \geq 0$ and \eqref{hyp ref}, assume that $\mu,\nu \in \cP_2(M)$ with $\cH(\mu\,|\,\mm),\cH(\nu\,|\,\mm) < \infty$. Let $(T_n)_{n\in\N} \subset \R_{\geq 0}$ be a sequence converging to $+\infty$ and, for any $n$, consider the functional $\ent(\cdot\,|\,R_{0,T_n})$ defined on $\Pi(\mu,\nu)$ endowed with the weak topology. Then \bes \Gamma-\lim_{n\to\infty}\ent(\cdot\,|\,R_{0,T_n}) = \ent(\cdot\,|\,\mm\otimes\mm)\ees
\end{lemma}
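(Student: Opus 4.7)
The plan is to verify, on $\Pi(\mu,\nu)$ endowed with the narrow topology, the two defining conditions of $\Gamma$-convergence for the functionals $\cH(\cdot\,|\,R_{0,T_n})$ towards $\cH(\cdot\,|\,\mm\otimes\mm)$.

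A preliminary step driving the whole argument is the narrow convergence $R_{0,T_n} \to \mm \otimes \mm$. Both marginals of $R_{0,T_n}$ equal $\mm$ (by invariance of $\mm$), so the family is automatically tight. Moreover, for tensor products $f \otimes g$ with $f,g \in C_b(M) \subset L^2(\mm)$ (the inclusion holds since $\mm \in \cP(M)$ by \eqref{hyp ref}), the ergodicity property \eqref{eq:ergodicity} combined with Cauchy--Schwarz yields
\[
\int f(x) g(y)\, \De R_{0,T_n}(x,y) = \int f(x)\, \sfP_{T_n}g(x)\, \De\mm(x) \;\xrightarrow{n \to \infty}\; \Big(\!\int f\,\De\mm\!\Big)\Big(\!\int g\,\De\mm\!\Big).
\]
Thanks to tightness, a Stone--Weierstrass approximation on compact subsets of $M \times M$ upgrades this convergence to arbitrary $\phi \in C_b(M \times M)$.

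For the $\Gamma$-liminf I would invoke the joint narrow lower semicontinuity of $\cH$ in both arguments, which is a classical consequence of the dual representation
\[
\cH(\pi \,|\, q) = \sup_{\phi \in C_b(M \times M)} \Big\{ \int \phi\,\De\pi - \log \int e^\phi\,\De q \Big\},
\]
exhibiting $\cH$ as a supremum of jointly narrowly continuous functionals. Given a narrowly convergent sequence $\pi_n \to \pi$ in $\Pi(\mu,\nu)$, combining this with the preliminary step immediately produces $\liminf_n \cH(\pi_n\,|\,R_{0,T_n}) \geq \cH(\pi\,|\,\mm\otimes\mm)$.

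For the recovery sequence I would simply take the constant $\pi_n \equiv \pi$. Only the case $\cH(\pi\,|\,\mm\otimes\mm) < +\infty$ is non-trivial; in that case $\pi \ll \mm\otimes\mm$ and
\[
\cH(\pi\,|\,R_{0,T_n}) = \cH(\pi\,|\,\mm\otimes\mm) - \int \log \hp_{T_n}\,\De\pi,
\]
so it suffices to show $\liminf_n \int \log \hp_{T_n}\,\De\pi \geq 0$. The Gaussian lower bound \eqref{eq:lowerbound} yields
\[
\int \log \hp_{T_n}\,\De\pi \;\geq\; -\frac{\kappa}{2(e^{\kappa T_n/2}-1)}\int \sfd_g^2(x,y)\,\De\pi(x,y)
\]
(with prefactor $1/T_n$ in the limiting case $\kappa = 0$); the right-hand side tends to $0$ since $\int \sfd_g^2\,\De\pi < \infty$ thanks to $\mu,\nu \in \cP_2(M)$. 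The main conceptual obstacle is that the upper Gaussian estimate \eqref{eq:upperbound} contains an additive constant $C_\varepsilon$ which does not vanish as $T_n \to \infty$, so one cannot sandwich $\log \hp_{T_n}$ pointwise between quantities converging to $0$. Phrasing the result as $\Gamma$-convergence rather than as pointwise convergence of functionals neatly sidesteps this issue: the limsup side only ever needs the one-sided bound \eqref{eq:lowerbound}, used in integrated form against the fixed coupling $\pi$.
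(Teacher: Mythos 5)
Your proposal is correct and follows the same overall architecture as the paper's proof: weak convergence of $R_{0,T_n}$ to $\mm\otimes\mm$ plus joint lower semicontinuity of the relative entropy for the $\Gamma$-liminf, and the constant recovery sequence $\pi^n\equiv\pi$ combined with the Gaussian lower bound \eqref{eq:lowerbound} and $\mu,\nu\in\cP_2(M)$ for the $\Gamma$-limsup. The one place where you genuinely diverge is the preliminary step $R_{0,T_n}\rightharpoonup\mm\otimes\mm$: the paper derives it from the same kernel lower bound \eqref{eq:lowerbound} via Fatou's lemma (liminf inequality for lower bounded l.s.c.\ test functions) and Portmanteau, whereas you use invariance of $\mm$ for tightness, the $L^2$-ergodicity \eqref{eq:ergodicity} on tensor products, and Stone--Weierstrass. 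Both are valid; your route is independent of the heat kernel lower bound for this step and would survive in settings where only ergodicity is available, while the paper's is more economical here since \eqref{eq:lowerbound} is needed anyway for the recovery sequence and thus does double duty. Your closing observation---that the two-sided pointwise control of $\log\hp_{T_n}$ is never needed because the limsup side only requires the one-sided bound integrated against a fixed $\pi$---is accurate and is exactly why the paper's argument also goes through.
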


\begin{proof}
As a first step, we claim that for any lower bounded lower semicontinuous function $\phi$ on $M \times M$ it holds
\begin{equation}
\label{eq:portmanteau}
\liminf_{n \to \infty}\int_{M \times M} \phi\,\De R_{0,T_n} \geq \int_{M \times M}\phi\,\De(\mm \otimes \mm).
\end{equation}
Since $R_{0,T_n}(M \times M) = 1$, the claim is trivial for constant functions; hence, without loss of generality we can assume that $\phi \geq 0$ and, under this further assumption, the Gaussian lower bound \eqref{eq:lowerbound} together with Fatou's lemma yields \eqref{eq:portmanteau}. By Portmanteau theorem this is equivalent to say that
\be
\label{eq:weakconv} 
R_{0,T_n} \rightharpoonup \mm\otimes\mm
\ee 
in duality with $C_b(M \times M)$.

After this premise, let $\pi \in \Pi(\mu,\nu)$, $(\pi_n) \subset \Pi(\mu,\nu)$ and assume that $\pi_n \rightharpoonup \pi$. The lower semicontinuity of the relative entropy w.r.t.\ to both its arguments together with \eqref{eq:weakconv} gives that 
\bes
\liminf_{n\to\infty} \ent(\pi_n\,|\,R_{0,T_n}) \geq \ent(\pi\,|\,\mm\otimes\mm),
\ees
thus the $\Gamma$-liminf inequality. To complete the proof, let $\pi \in \Pi(\mu,\nu)$ and find a sequence $(\pi^n) \subset \Pi(\mu,\nu)$ such that
\begin{equation}
\label{eq:gammalimsup}
\limsup_{n \to \infty} \ent(\pi^n\,|\,R_{0,T_n}) \leq \ent(\pi\,|\,\mm\otimes\mm).
\end{equation}
To this aim it is not restrictive to assume that $\cH(\pi\,|\,\mm\otimes\mm) < \infty$ and it is also easy to see that $\pi^n \equiv \pi$ satisfies \eqref{eq:gammalimsup}. Indeed, the Gaussian lower bound \eqref{eq:lowerbound} and the fact that \eqref{hyp pot} holds with $\kappa \geq 0$ imply that 
\bes
-\log \hp_{T_n}(x,y)\leq \frac{1}{2T_n} \sfd^2(x,y)
\ees
for all $x,y \in M$ and for all $n \in \N$. In particular, since $\sfd^2(x,y) \leq 2\sfd^2(x,z) + 2\sfd^2(y,z)$ for all $z \in M$ and $\mu,\nu \in \cP_2(M)$ we have that $\int_{M \times M} -\log \hp_{T_n}(x,y)\pi(\De x \De y)$ is well defined for all $n$ and
\be
\label{eq:gammaconv1}
-\int_{M \times M} \log \hp_{T_n}(x,y)\pi(\De x \De y)\leq \frac{C}{2T_n}
\ee
for some constant $C$ independent of $n$. Next, observe that by definition of $\hp_{T_n}(\cdot,\cdot)$ and $\pi^n$ we have
\be
\label{eq:gammaconv2}
\ent(\pi^n\,|\,R_{0,T_n}) = \ent(\pi\,|\,\mm\otimes\mm) -\int \log \hp_{T_n}(x,y) \pi(\De x,\De y),
\ee
where the sum on the right-hand side is well defined because $0 \leq \ent(\pi\,|\,\mm\otimes\mm) < \infty$ by assumption and the fact that $\mm\otimes\mm$ is a probability, and a fortiori $\int \log \hp_{T_n}\, \De \pi \in [-\infty,+\infty)$, since $\ent(\pi^n\,|\,R_{0,T_n}) \geq 0$. Plugging \eqref{eq:gammaconv1} into \eqref{eq:gammaconv2} and letting $n \to \infty$ yields \eqref{eq:gammalimsup} and thus the $\Gamma$-limsup inequality.
\end{proof}

Let us point out that in the previous lemma and thus in Theorem \ref{thm:longtime}, \eqref{hyp ref} is required only when $\kappa=0$. As regards the long-time behavior of $\cons(\mu,\nu)$, we need to determine in which way $\cons(\mu,\nu)$ is controlled in terms of $\cost(\mu,\nu)$.

\begin{lemma}\label{lem:cost bounds cons}
Under \eqref{hyp pot} and \eqref{hyp marginals weak} it holds
\begin{equation}
\label{eq:cost bounds cons}
T|\cons(\mu,\nu)| \leq \cost(\mu,\nu) - \frac12 \Big(\ent(\mu\,|\,\mm) + \ent(\nu\,|\,\mm)\Big).
\end{equation}
\end{lemma}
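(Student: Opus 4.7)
The plan is to reduce the inequality to an elementary $|a-b|\leq a+b$ observation, once we have reorganized both sides through the Benamou--Brenier formulation \eqref{eq:bbs} and the conservation of the quantity \eqref{eq:constant quantity} along the optimal flow.

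First, since \eqref{hyp pot} and \eqref{hyp marginals weak} are in force, I would invoke \eqref{eq:bbs} to write
\[
\cost(\mu,\nu)-\tfrac12\big(\ent(\mu\,|\,\mm)+\ent(\nu\,|\,\mm)\big)=\iint_0^T\!\Big(\tfrac12|\bar v_t^T|^2+\tfrac18|\nabla\log\bar\rho_t^T|^2\Big)\bar\rho_t^T\,\De t\,\De\mm,
\]
and to note that the right-hand side is a sum of two \emph{non-negative} quantities, both of which are finite under \eqref{hyp marginals weak} since the infimum in \eqref{eq:bbs} is attained with finite value.

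Next, I would use the fact that the energy defined by \eqref{eq:constant quantity} is constant in $t$ along the optimal flow $(\bar\rho_t^T,\bar v_t^T)$, as recalled from \cite[Corollary 1.1]{Conforti17} and \cite[Lemma 3.2]{GLRT19}. Integrating \eqref{eq:constant quantity} over $t\in[0,T]$ therefore yields
\[
T\,\cons(\mu,\nu)=\iint_0^T\Big(\tfrac12|\bar v_t^T|^2-\tfrac18|\nabla\log\bar\rho_t^T|^2\Big)\bar\rho_t^T\,\De t\,\De\mm.
\]

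Finally, setting
\[
a:=\iint_0^T \tfrac12|\bar v_t^T|^2\,\bar\rho_t^T\,\De t\,\De\mm\geq 0,\qquad b:=\iint_0^T \tfrac18|\nabla\log\bar\rho_t^T|^2\,\bar\rho_t^T\,\De t\,\De\mm\geq 0,
\]
the two displays above read $T\cons(\mu,\nu)=a-b$ and $\cost(\mu,\nu)-\tfrac12(\ent(\mu\,|\,\mm)+\ent(\nu\,|\,\mm))=a+b$. The elementary inequality $|a-b|\leq a+b$ for non-negative reals delivers \eqref{eq:cost bounds cons} at once. There is no real obstacle here: the only subtlety is making sure both $a$ and $b$ are finite (so that the algebraic manipulation with $|a-b|$ is legitimate and no $\infty-\infty$ issue arises), which is guaranteed by the finiteness of the minimum in \eqref{eq:bbs} under \eqref{hyp marginals weak}.
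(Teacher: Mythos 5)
Your proof is correct. The starting point is the same as the paper's: integrate the conserved quantity \eqref{eq:constant quantity} in time and use \eqref{eq:bbs} to identify $T\cons(\mu,\nu)=a-b$ and $\cost(\mu,\nu)-\tfrac12(\ent(\mu\,|\,\mm)+\ent(\nu\,|\,\mm))=a+b$ with $a,b\geq 0$ the time-integrated kinetic and Fisher terms of the optimal flow; the upper bound $T\cons\leq a+b$ is then obtained identically in both arguments. Where you diverge is the lower bound: the paper proves $-T\cons(\mu,\nu)\leq a+b$, i.e.\ $2b\leq 2(a+b)$, by expanding $\nabla\log\bar\rho_t^T=\nabla\log \sfP_{Tt}f^T+\nabla\log \sfP_{T(1-t)}g^T$, applying Young's inequality, and summing the two representations in \eqref{eq:cost representations}, whereas you simply invoke $|a-b|\leq a+b$ for non-negative reals. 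Your route is more elementary and avoids the $(f,g)$-decomposition entirely; it is legitimate because, as you note, $a+b$ equals the finite optimal value in \eqref{eq:bbs} under \eqref{hyp marginals weak} (cf.\ also \eqref{eq:finiteness}), so both $a$ and $b$ are finite and no $\infty-\infty$ cancellation occurs. The paper's detour buys nothing extra here beyond re-deriving $b\leq a+b$ in disguise, so your shortcut is a genuine simplification of the second half of the proof.
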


\begin{proof}
Let us start observing that
\[
\begin{split}
T\cons(\mu,\nu) & = T\int_0^1 \cons(\mu,\nu)\De t = \iint_0^1\Big(\frac{1}{2T} |v_t^T|^2 - \frac{T}{8} |\nabla\log\rho_t^T|^2\Big)\rho_t^T\,\De t\De\mm \\
& = \cost(\mu,\nu) -\frac12\Big(\ent(\mu\,|\,\mm) + \ent(\nu\,|\,\mm)\Big) - \frac{T}{4} \iint_0^1 |\nabla\log\rho_t^T|^2\rho_t^T\,\De t\De\mm,
\end{split}
\]
whence trivially the desired upper bound for $T\cons(\mu,\nu)$. On the other hand, Young's inequality and \eqref{eq:cost representations} yield
\[
\begin{split}
\frac{T}{4}\iint_0^1 |\nabla\log\rho_t^T|^2\rho_t^T\,\De t\De\mm & \leq \frac{T}{2}\iint_0^1\Big(|\nabla\log \sfP_{Tt}f^T|^2 + |\nabla\log \sfP_{T(1-t)}g^T|^2\Big)\rho_t^T\,\De t\De\mm \\
& = 2\cost(\mu,\nu) - \ent(\mu\,|\,\mm) - \ent(\nu\,|\,\mm),
\end{split}
\]
so that plugging this inequality into the previous identity gives also the lower bound for $T\cons(\mu,\nu)$.
\end{proof}

We are now in the position to prove Theorem \ref{thm:longtime}.

\begin{proof}
It is easily seen that the unique optimal coupling in 
\be\label{decoupledSP}
\inf_{\pi\in\Pi(\mu,\nu)}\ent(\pi\,|\,\mm\otimes\mm)
\ee
is $\mu\otimes\nu$ and that the optimal value is $\ent(\mu\,|\,\mm) + \ent(\nu\,|\,\mm)$. Consider a sequence $(T_n)_{n\in\N}$ such that $T_n\to\infty$ and let $\pi_n$ be the optimal coupling in the corresponding Schr\"odinger problem, i.e. $\ent(\pi^n\,|\,R_{0,T_n}) = \mathscr{C}_{T_n}(\mu,\nu)$. Since $\Pi(\mu,\nu)$ is compact for the weak topology, $\pi^n$ has an accumulation point $\pi^*$. The basic results of $\Gamma$-convergence ensure that $\pi^*$ is optimal for \eqref{decoupledSP}, whence $\pi^* = \mu\otimes\nu$ by uniqueness. Using again the basic properties of $\Gamma$-convergent sequences we finally obtain the convergence of the optimal values, namely 
\[
\lim_{n\to\infty}\mathscr{C}_{T_n}(\mu,\nu) = \ent(\mu\,|\,\mm) + \ent(\nu\,|\,\mm).
\]
Dividing by $T$ \eqref{eq:cost bounds cons} and letting $T \to \infty$, the long-time behavior of $\cons(\mu,\nu)$ is established as well.
\end{proof}

\subsection{Proof of Theorem \ref{thm:derivcost} and Theorem \ref{thm short time}}

The proof of Theorem \ref{thm:derivcost} requires the preparatory Lemmas \ref{lem:incr fish}, \ref{lem:fg unif conv} and \ref{lem:energy is continuous}. In the first one we show that the Fisher information of the entropic interpolation is non-increasing as a function of $T$.

\begin{lemma}\label{lem:incr fish}
Under the assumptions of Theorem \ref{thm:derivcost}, let $(\rho^T,v^T)$ be optimal for the formulation \eqref{eq:bbs2} and $(\rho^0,v^0)$ for \eqref{eq:benamou-brenier}. Then the function
\bes
[0,\infty)\ni T \mapsto \iint_0^1 |\nabla \log \rho^T_{t}|^2 \rho^T_t \De t\De \mm
\ees
is non-increasing, where $\int |\nabla\log\rho_t^0|^2 \rho_t^0\De\mm := +\infty$ whenever $\rho_t^0 \notin W^{1,2}_{loc}(M)$, the space of locally Sobolev functions on $M$.
\end{lemma}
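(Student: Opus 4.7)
The plan is to obtain monotonicity via a four-point rearrangement inequality comparing the two optimizers of the rescaled Benamou--Brenier formulation \eqref{eq:bbs2} at two different values of $T$. The crucial structural observation is that the action functional
\[
\mathscr{A}_T(\rho,v) = \iint_0^1\Big(\tfrac{1}{2}|v_t|^2 + \tfrac{T^2}{8}|\nabla\log\rho_t|^2\Big)\rho_t\De t\De\mm
\]
depends on the parameter $T$ only through the coefficient $T^2/8$ multiplying the Fisher term, so that for \emph{any} admissible pair $(\rho,v)$ the difference
\[
\mathscr{A}_{T_2}(\rho,v) - \mathscr{A}_{T_1}(\rho,v) = \frac{T_2^2-T_1^2}{8}\iint_0^1 |\nabla\log\rho_t|^2\rho_t\De t\De\mm
\]
involves only the Fisher information.

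First I would handle the monotonicity on $(0,\infty)$. For $0<T_1<T_2$, the optimality of $(\rho^{T_i},v^{T_i})$ for $\mathscr{A}_{T_i}$ gives
\[
\mathscr{A}_{T_1}(\rho^{T_1},v^{T_1}) \leq \mathscr{A}_{T_1}(\rho^{T_2},v^{T_2}), \qquad \mathscr{A}_{T_2}(\rho^{T_2},v^{T_2}) \leq \mathscr{A}_{T_2}(\rho^{T_1},v^{T_1}).
\]
Summing these two inequalities and substituting the identity above twice (once for each admissible competitor), the kinetic parts cancel and only Fisher terms survive, yielding
\[
(T_2^2-T_1^2)\bigg(\iint_0^1|\nabla\log\rho_t^{T_2}|^2\rho_t^{T_2}\De t\De\mm - \iint_0^1|\nabla\log\rho_t^{T_1}|^2\rho_t^{T_1}\De t\De\mm\bigg)\leq 0,
\]
from which, after dividing by $T_2^2-T_1^2>0$, the claimed monotonicity on $(0,\infty)$ follows immediately.

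Next I would extend the inequality across $T=0$. Fix $T>0$ and suppose $\iint_0^1|\nabla\log\rho_t^0|^2\rho_t^0\De t\De\mm <\infty$ (otherwise the convention makes the bound trivial). The same rearrangement as above with $T_1=0$, $T_2=T$, using that $(\rho^T,v^T)$ is an admissible competitor for the Benamou--Brenier formula \eqref{eq:benamou-brenier} and that $(\rho^0,v^0)$ is, conversely, an admissible competitor for \eqref{eq:bbs2}, leads to
\[
\tfrac{T^2}{8}\iint_0^1|\nabla\log\rho_t^T|^2\rho_t^T\De t\De\mm \leq \tfrac{T^2}{8}\iint_0^1|\nabla\log\rho_t^0|^2\rho_t^0\De t\De\mm,
\]
which concludes the proof after dividing by $T^2/8>0$.

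The main obstacle is the justification that $(\rho^0,v^0)$ qualifies as an admissible competitor in \eqref{eq:bbs2} when only the finiteness of the Fisher integral is assumed: indeed, the admissibility definition requires a uniform $L^\infty$ bound on $\rho_t^0$, which is not automatic under the sole $\CD(\kappa,\infty)$ assumption \eqref{hyp pot}. I would circumvent this either by invoking the regularity of Wasserstein geodesics for bounded compactly supported marginals in the smooth Riemannian setting (via Jacobian estimates for the optimal transport map), or, if necessary, by mollifying $\rho$ and $\sigma$ to obtain an admissible approximation $(\rho^{0,\eta},v^{0,\eta})$ with the required $L^\infty$ control, running the competitor argument for each $\eta>0$, and passing to the limit using the lower semicontinuity of the Fisher information along weakly converging measures together with the continuity of the kinetic part.
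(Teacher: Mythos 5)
Your proof is correct and is essentially the paper's own argument: the paper's proof consists exactly of summing the two optimality inequalities $\mathscr{A}_{T_1}(\rho^{T_1},v^{T_1})\le\mathscr{A}_{T_1}(\rho^{T_2},v^{T_2})$ and $\mathscr{A}_{T_2}(\rho^{T_2},v^{T_2})\le\mathscr{A}_{T_2}(\rho^{T_1},v^{T_1})$ for $0\le T_1<T_2$ and dividing by $(T_2^2-T_1^2)/8$. The only difference is that you explicitly address the admissibility of the Wasserstein geodesic as a competitor in \eqref{eq:bbs2} when $T_1=0$ (the uniform $L^\infty$ bound on $\rho_t^0$ required by the continuity-equation constraints), a point the paper passes over in silence; your proposed remedies (boundedness of interpolant densities for bounded, compactly supported marginals in the smooth setting, or mollification combined with lower semicontinuity of the Fisher information) are both standard and adequate.
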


\begin{proof}
Fix $0 \leq T_1 < T_2$. Summing the inequalities
\bes
\mathscr{A}_{T_1}(\rho^{T_1},v^{T_1}) \leq \mathscr{A}_{T_1}(\rho^{T_2},v^{T_2}), \quad \mathscr{A}_{T_2}(\rho^{T_2},v^{T_2}) \leq \mathscr{A}_{T_2}(\rho^{T_1},v^{T_1})
\ees
and dividing by $(T_2^2-T_1^2)/8$ we obtain 
\bes
\iint_0^1 |\nabla \log \rho^{T_2}_t|^2 \rho^{T_2}_t \De t \De \mm \leq \iint_0^1 |\nabla \log \rho^{T_1}_t|^2 \rho^{T_1}_t \De t \De \mm,
\ees
which is the desired conclusion.
\end{proof}

In the second lemma we prove the continuity in $T$ of the functions $f^T$ and $g^T$ given by \eqref{eq:schr system} with respect to the $L^p(\mm)$ norm, for any $p \in [1,\infty)$.

\begin{lemma}\label{lem:fg unif conv}
Under the assumptions of Theorem \ref{thm:derivcost}, the functions $T \mapsto f^T$ and $T \mapsto g^T$ are continuous from $(0,\infty)$ to $L^p(\mm)$, for any $p \in [1,\infty)$.

As a byproduct, the functions $T \mapsto \sfP_T f^T$ and $T \mapsto \sfP_T g^T$ are continuous from $(0,\infty)$ to $L^p(\mm)$, $p \in [1,\infty)$, as well.
\end{lemma}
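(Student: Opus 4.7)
The plan is to combine uniqueness of the Schr\"odinger decomposition with the uniform $L^\infty$-bounds with compact support provided by Lemma~\ref{lem:fg bounds}(i), via a weak-compactness argument. Fix $T \in (0,\infty)$ and a sequence $T_n \to T$, which without loss of generality lies in a compact interval $[T_0,T_1] \subset (0,\infty)$. Then $\{f^{T_n}\}, \{g^{T_n}\}$ are uniformly bounded in $L^\infty(\mm)$ and supported in the compact sets $\supp(\mu), \supp(\nu)$ respectively, so in particular they are bounded in every $L^p(\mm)$. By the subsequence principle, it suffices to prove that any subsequence admits a further subsequence converging pointwise $\mm$-a.e.\ to $(f^T,g^T)$: combined with the uniform bounds and compact supports, dominated convergence will then upgrade this to the desired $L^p$-convergence for all $p\in[1,\infty)$.

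Along any such subsequence I would extract weak $L^2(\mm)$-limits $f^{T_{n_k}} \rightharpoonup \tilde f$ and $g^{T_{n_k}} \rightharpoonup \tilde g$; both limits are non-negative and still supported in $\supp(\mu)$, $\supp(\nu)$. The key step is to pass to the limit in the Schr\"odinger system~\eqref{eq:schr system}. By the joint smoothness of the heat kernel on $(0,\infty)\times M\times M$, one has uniform convergence of $\hp_{T_{n_k}}$ to $\hp_T$ on the compact $[T_0,T_1]\times\supp(\mu)\times\supp(\nu)$; combining this with the weak convergence of $g^{T_{n_k}}$ and representation \eqref{eq:heat representation} yields, for every $x\in\supp(\mu)$,
\[
\sfP_{T_{n_k}} g^{T_{n_k}}(x) \longrightarrow \sfP_T \tilde g(x).
\]
Since $\sfP_{T_{n_k}} g^{T_{n_k}}$ is also uniformly bounded below on $\supp(\mu)$ (again by Lemma~\ref{lem:fg bounds} and strict positivity of $\hp$ on the compact), the identity $\rho = f^{T_{n_k}} \sfP_{T_{n_k}} g^{T_{n_k}}$ can be inverted pointwise on $\supp(\mu)$ to give $f^{T_{n_k}}(x)\to \rho(x)/\sfP_T\tilde g(x)$; uniqueness of weak limits forces $\tilde f\, \sfP_T\tilde g=\rho$, and symmetrically $\tilde g\, \sfP_T \tilde f = \sigma$, so $(\tilde f,\tilde g)$ solves~\eqref{eq:schr system} with parameter $T$. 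Testing the weak convergence against $\IND_{\supp(\nu)}\in L^2(\mm)$ preserves the normalization $\|\tilde g\|_{L^1(\mm)}=1$, and the uniqueness-up-to-scaling result then pins down $(\tilde f,\tilde g)=(f^T,g^T)$, as required.

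For the byproduct, I would decompose $\sfP_{T_n}f^{T_n}-\sfP_T f^T = \sfP_{T_n}(f^{T_n}-f^T) + (\sfP_{T_n}-\sfP_T)f^T$: the first summand has $L^p$-norm at most $\|f^{T_n}-f^T\|_{L^p(\mm)}\to 0$ by the $L^p$-contractivity of $\sfP_t$ with respect to the invariant measure $\mm$, while the second tends to $0$ by strong continuity of the heat semigroup on $L^p(\mm)$ for $p \in [1,\infty)$ (and analogously for $g^T$). The main obstacle is the simultaneous passage to the limit in the kernel time and in the density inside $\sfP_{T_{n_k}}g^{T_{n_k}}$: it is crucial that the uniform compact-support bounds of Lemma~\ref{lem:fg bounds} let us restrict the whole argument to a compact set where the heat kernel is smooth and bounded between strictly positive constants, thereby upgrading weak $L^2$-convergence first to pointwise a.e.\ and then, via dominated convergence, to strong $L^p$-convergence.
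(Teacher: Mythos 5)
Your strategy is essentially the same as the paper's: uniform $L^\infty$ bounds with fixed compact supports for $T$ in a compact neighbourhood, weak compactness to extract limits $\tilde f,\tilde g$, passage to the limit in the Schr\"odinger system \eqref{eq:schr system} by inverting $\rho=f^T\sfP_Tg^T$ thanks to a positive lower bound on $\sfP_Tg^T$ over $\supp(\mu)$, identification of $(\tilde f,\tilde g)$ with $(f^T,g^T)$ via uniqueness of the decomposition and the normalization \eqref{eq:gauge}, and a dominated-convergence upgrade to $L^p$. The differences are cosmetic and legitimate: weak $L^2$ versus weak-$*$ $L^\infty$ limits; direct uniform convergence of the kernel on the compact product set versus the paper's factorization $\sfP_{T_0+h}=\sfP_{\delta+h}\circ\sfP_{T_0-\delta}$ combined with \eqref{eq:heat representation}; and your treatment of the byproduct via contractivity plus strong continuity of the semigroup, which is if anything cleaner than the paper's.

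There is, however, one genuine gap at the very first step: the claim that $\{g^{T_n}\}$ is uniformly bounded in $L^\infty(\mm)$ does not follow from Lemma \ref{lem:fg bounds}(i) as stated. That lemma controls the \emph{product} $\|g^T\|_{L^\infty(\mm)}\|f^T\|_{L^1(\mm)}$, so to deduce a uniform bound on $\|g^T\|_{L^\infty(\mm)}$ you must first show that $\|f^T\|_{L^1(\mm)}$ is bounded away from $0$ uniformly for $T\in[T_0,T_1]$. The paper does exactly this: it writes $1=\int f^T(x)g^T(y)\hp_T(x,y)\,\mm(\De x)\mm(\De y)$, bounds $\hp_T$ from above on $\supp(\mu)\times\supp(\nu)$ via the Gaussian estimate \eqref{eq:upperbound}, and uses the normalization $\|g^T\|_{L^1(\mm)}=1$ to conclude $\|f^T\|_{L^1(\mm)}\ge 1/C'_\delta$. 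Without this step your Banach--Alaoglu extraction for $g^{T_n}$ has no uniform bound to rest on, the dominating function for the final $L^p$ upgrade of $g^{T_n}$ is missing, and the uniform lower bound on $\sfP_{T_{n_k}}f^{T_{n_k}}$ over $\supp(\nu)$ needed to invert the second equation $\sigma=g^{T}\sfP_{T}f^{T}$ (which likewise scales like $c_{T_0,T_1}\|f^{T}\|_{L^1(\mm)}$) is unavailable. The gap is entirely fillable by the short argument just described, but it is a real missing step rather than a detail already contained in the cited lemma.
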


\begin{proof}
Let $T_0 > 0$, $0<\delta<T_0$ and denote by $C_\delta$ the positive constant provided by Lemma \ref{lem:fg bounds}-(i) on the interval $[T_0-\delta,T_0+\delta]$. From the first bound in \eqref{eq:fg local bounds} we immediately deduce that $(f^T)_{T \in [T_0-\delta,T_0+\delta]}$ is bounded in $L^\infty(\mm)$, hence in $L^p(\mm)$ for all $p \in [1,\infty]$, because all the functions $f^T$ are supported in $\supp(\mu)$ and this has finite mass, because bounded. To prove the same property for $(g^T)_{T \in [T_0-\delta,T_0+\delta]}$ requires a more technical argument. 

From \eqref{eq:upperbound} with $\varepsilon = 1$ we first oberve that
\[
\begin{split}
1 & = \int f^T(x)g^T(y)\hp_T(x,y)\mm(\De x)\mm(\De y) \\
& \leq \int f^T(x)g^T(y)\frac{C(\kappa,T_0,\delta)}{\sqrt{\mm(B_{\sqrt{T_0-\delta}}(x))\mm(B_{\sqrt{T_0-\delta}}(y))}}\exp\Big(-\frac{\sfd^2(x,y)}{5(T_0+\delta)}\Big) \mm(\De x)\mm(\De y) \\
& \leq \int f^T(x)g^T(y)\frac{C(\kappa,T_0,\delta)}{\sqrt{\mm(B_{\sqrt{T_0-\delta}}(x))\mm(B_{\sqrt{T_0-\delta}}(y))}} \mm(\De x)\mm(\De y), \qquad \forall T \in [T_0-\delta,T_0+\delta]
\end{split}
\]
and since $\supp(f^T) \subset \supp(\mu)$, $\supp(g^T) \subset \supp(\nu)$ for all $T>0$, $\mu,\nu$ have bounded support and $x \mapsto \mm(B_t(x))$ is a continuous function,
\[
\inf_{x \in \supp(f^T)}\mm(B_{\sqrt{T_0-\delta}}(x),\, \inf_{y \in \supp(g^T)}\mm(B_{\sqrt{T_0-\delta}}(y) \geq C_\delta > 0 
\]
with $C_\delta$ independent of $T$. Thus if we combine this inequality with the previous one and recall the chosen normalization \eqref{eq:gauge} we obtain $1 \leq C'_\delta \|f^T\|_{L^1(\mm)}$ for all $T \in [T_0-\delta,T_0+\delta]$ for some $C'_\delta > 0$ (depending on $T_0$ and $\kappa$ too), i.e.\
\[
\|f^T\|_{L^1(\mm)} \geq \frac{1}{C'_\delta}, \qquad \forall T \in [T_0-\delta,T_0+\delta].
\]
Plugging this inequality into the second bound in \eqref{eq:fg local bounds} yields
\begin{equation}\label{eq:muoio di caldo}
\|g^T\|_{L^\infty(\mm)} \leq \frac{C'_\delta}{C_\delta}\|\sigma\|_{L^\infty(\mm)}, \qquad \forall T \in [T_0-\delta,T_0+\delta].
\end{equation}
Hence also $(g^T)_{T \in [T_0-\delta,T_0+\delta]}$ is bounded in $L^p(\mm)$ for all $p \in [1,\infty]$, because all the functions $g^T$ are supported in $\supp(\nu)$ and this has finite mass. 

After this premise, we are now in the position to prove the continuity of $T \mapsto f^T$ and $T \mapsto g^T$. Indeed, if $|h|<\delta$ then by Banach-Alaoglu theorem there exist a (not relabeled) subsequence and functions $\tilde{f},\tilde{g} \in L^1 \cap L^\infty(\mm)$ such that $f^{T_0+h} \stackrel{*}{\rightharpoonup} \tilde{f}$ and $g^{T_0+h} \stackrel{*}{\rightharpoonup} \tilde{g}$ in $L^\infty(\mm)$ as $h \to 0$. We claim that
\begin{equation}\label{eq:claim}
\lim_{h \to 0}\sfP_{T_0+h}f^{T_0+h} = \sfP_{T_0}\tilde{f} \qquad \text{and} \qquad \lim_{h \to 0}\sfP_{T_0+h}g^{T_0+h} = \sfP_{T_0}\tilde{g},
\end{equation}
where both limits have to be understood in $L^p(\mm)$ for $p \in [1,\infty)$.

To this aim, write
\[
\|\sfP_{T_0+h}f^{T_0+h} - \sfP_{T_0}\tilde{f}\|_{L^p(\mm)} \leq \|\sfP_{T_0+h}f^{T_0+h} - \sfP_{T_0+h}\tilde{f}\|_{L^p(\mm)} + \|\sfP_{T_0+h}\tilde{f} - \sfP_{T_0}\tilde{f}\|_{L^p(\mm)}
\]
and observe that the second term on the right-hand side trivially vanishes as $h \to 0$. As regards the first one,
\[
\begin{split}
\|\sfP_{T_0+h}f^{T_0+h} - \sfP_{T_0+h}\tilde{f}\|_{L^p(\mm)} & = \|\sfP_{\delta + h}(\sfP_{T_0-\delta}f^{T_0+h} - \sfP_{T_0-\delta}\tilde{f})\|_{L^p(\mm)} \\
& \leq \|\sfP_{T_0-\delta}f^{T_0+h} - \sfP_{T_0-\delta}\tilde{f}\|_{L^p(\mm)},
\end{split}
\]
because $\sfP_T : L^p(\mm) \to L^p(\mm)$ is a contraction for all $p \in [1,\infty]$. Secondly, by \eqref{eq:heat representation}, the fact that $f^{T_0+h} \stackrel{*}{\rightharpoonup} \tilde{f}$ in $L^\infty(\mm)$ and $\hp_t(x,\cdot) \in L^1(\mm)$ for all $x \in M$, we deduce that 
\begin{equation}
\label{eq:pointwise}
\lim_{h \to 0}\sfP_{T_0-\delta}f^{T_0+h}(x) = \sfP_{T_0-\delta}\tilde{f}(x), \qquad \forall x \in M.
\end{equation}
Since, as already remarked, $(f^T)_{T \in [T_0-\delta,T_0+\delta]}$ is bounded in $L^\infty(\mm)$ and all the functions $f^T$ are supported in $\supp(\mu)$, there exists $M>0$ sufficiently large such that $0 \leq f^T \leq M\mathds{1}_{\supp(\mu)}$ for all $T \in [T_0-\delta,T_0+\delta]$, whence
\[
0 \leq \sfP_{T_0-\delta}f^T \leq M \sfP_{T_0-\delta}(\mathds{1}_{\supp(\mu)}), \qquad \forall T \in [T_0-\delta,T_0+\delta]
\]
by the maximum principle. As the right-hand side above belongs to $L^1 \cap L^\infty(\mm)$ and does not depend on $T$, by \eqref{eq:pointwise} and the dominated convergence theorem we thus infer that
\[
\lim_{h \to 0}\|\sfP_{T_0-\delta}f^{T_0+h} - \sfP_{T_0-\delta}\tilde{f}\|_{L^p(\mm)} = 0,
\]
whence, combining this fact with the previous steps, the validity of the first limit in \eqref{eq:claim}. The argument for the second limit is completely analogous.

As a consequence, up to extract a further (not relabeled) subsequence, we have that the limits in \eqref{eq:claim} hold $\mm$-a.e. Therefore, if we look at the Schr\"odinger system \eqref{eq:schr system} at time $T_0+h$, which reads as
\[
\rho = f^{T_0+h}\sfP_{T_0+h}g^{T_0+h}, \qquad \sigma = g^{T_0+h}\sfP_{T_0+h}f^{T_0+h},
\]
and rewrite it as
\[
f^{T_0+h} = \frac{\rho}{\sfP_{T_0+h}g^{T_0+h}}, \qquad g^{T_0+h} = \frac{\sigma}{\sfP_{T_0+h}f^{T_0+h}},
\]
which is possible because $\sfP_{T_0+h}f^{T_0+h}, \sfP_{T_0+h}g^{T_0+h} > 0$, we deduce that
\[
\lim_{h \to 0}f^{T_0+h} = \frac{\rho}{\sfP_{T_0}\tilde{g}} \qquad \textrm{and} \qquad \lim_{h \to 0}g^{T_0+h} = \frac{\sigma}{\sfP_{T_0}\tilde{f}},
\]
both limits being understood $\mm$-a.e. This is compatible with $f^{T_0+h} \stackrel{*}{\rightharpoonup} \tilde{f}$ and $g^{T_0+h} \stackrel{*}{\rightharpoonup} \tilde{g}$ in $L^\infty(\mm)$ as $h \to 0$ if and only if
\[
\tilde{f} = \frac{\rho}{\sfP_{T_0}\tilde{g}} \qquad \textrm{and} \qquad \tilde{g} = \frac{\sigma}{\sfP_{T_0}\tilde{f}},
\]
namely if and only if
\[
\rho = \tilde{f}\sfP_{T_0}\tilde{g}, \qquad \sigma = \tilde{g}\sfP_{T_0+h}\tilde{f}.
\]
This means that $\tilde{f} = f^{T_0}$ and $\tilde{g} = g^{T_0}$ by uniqueness of the $(f,g)$-decomposition and by the normalization \eqref{eq:gauge}. Since this is true for any convergent subsequence obtained via Banach-Alaoglu theorem, we deduce that the whole sequences $(f^{T_0+h})$ and $(g^{T_0+h})$ converge in $L^p(\mm)$ to $f^{T_0}$ and $g^{T_0}$ respectively, whence the desired conclusion.

It is then sufficient to combine the fact that $\tilde{f} = f^{T_0}$ and $\tilde{g} = g^{T_0}$ with \eqref{eq:claim} to get also the continuity of $T \mapsto \sfP_T f^T$ and $T \mapsto \sfP_T g^T$.
\end{proof}

Finally, in the next lemma we show that the cost $\cost(\mu,\nu)$, the energy $\cons(\mu,\nu)$ and the integrated-in-time Fisher information $\iint_0^1 |\nabla\log\rho_t^T|^2\,\De\mu_t^T$ are continuous as a function of $T$ (the regularity of $\cost(\mu,\nu)$ will be greatly improved in Theorem \ref{thm:derivcost}, of course).

\begin{lemma}
\label{lem:energy is continuous}
Under the assumptions of Theorem \ref{thm:derivcost}, the functions $T \mapsto \cost(\mu,\nu)$, $T \mapsto \cons(\mu,\nu)$ and $T \mapsto \iint_0^1 |\nabla\log\rho_t^T|^2\,\De\mu_t^T$ are continuous on $(0,\infty)$.
\end{lemma}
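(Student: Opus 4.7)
The plan is to leverage the $L^p$-continuity of the Schr\"odinger potentials $(f^T,g^T)$ from Lemma \ref{lem:fg unif conv}, together with closed-form representations of $\cost(\mu,\nu)$ and $\cons(\mu,\nu)$ in terms of $f^T,g^T$, and then to transfer continuity to the Fisher integral via the energy-conservation identity.

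For $T \mapsto \cost(\mu,\nu)$, I would start from the identity
\[
\cost(\mu,\nu) = \int \log f^T\,\De\mu + \int \log g^T\,\De\nu,
\]
which follows from $\pi^T$ having density $f^T(x)g^T(y)$ w.r.t.\ $R_{0,T}$ and the marginal constraints. Fix $T_0>0$ and $\delta\in(0,T_0)$. Lemma \ref{lem:fg bounds}(i) yields uniform $L^\infty$ bounds on $f^T,g^T$ for $T \in [T_0-\delta,T_0+\delta]$, which in particular bounds $\sfP_T g^T$ uniformly on $\supp(\mu)$; the Schr\"odinger relation $f^T = \rho/\sfP_T g^T$ then produces the lower bound $f^T \geq c\rho$ on $\{\rho>0\}$ with $c>0$ independent of $T$. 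Hence $\log c + \log\rho \leq \log f^T \leq C$ on $\supp(\mu)$, and the lower envelope is in $L^1(\mu)$ since $\cH(\mu\,|\,\mm)<\infty$ under \eqref{hyp marginals weak}. Combining this with $\mm$-a.e.\ convergence along subsequences granted by Lemma \ref{lem:fg unif conv}, dominated convergence delivers $\int \log f^T\,\De\mu \to \int \log f^{T_0}\,\De\mu$, and a standard subsequence-principle argument extends the limit to every sequence $T\to T_0$. The same argument for $g^T$ concludes.

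For $T \mapsto \cons(\mu,\nu)$, specializing \eqref{eq:energy representation} to $t=1/2$ and using the elementary identity $\nabla\log\sfP_{T/2}h \cdot \sfP_{T/2}h = \nabla\sfP_{T/2}h$ gives the compact form
\[
\cons(\mu,\nu) = -\tfrac12 \int \nabla\sfP_{T/2}f^T \cdot \nabla\sfP_{T/2}g^T\,\De\mm.
\]
The aim is then to prove $\nabla\sfP_{T/2}f^T \to \nabla\sfP_{T_0/2}f^{T_0}$ in $L^2(\mm)$ (and likewise for $g^T$), so that Cauchy-Schwarz finishes the job. Decomposing
\[
\nabla\sfP_{T/2}f^T - \nabla\sfP_{T_0/2}f^{T_0} = \nabla\sfP_{T/2}(f^T - f^{T_0}) + \nabla(\sfP_{T/2} - \sfP_{T_0/2})f^{T_0},
\]
the spectral calculus for the self-adjoint nonpositive generator $\sfL$ yields the $L^2$-smoothing bound $\|\nabla\sfP_t u\|_{L^2(\mm)} \leq (et)^{-1/2}\|u\|_{L^2(\mm)}$, which controls the first summand by a constant times $\|f^T-f^{T_0}\|_{L^2(\mm)} \to 0$ via Lemma \ref{lem:fg unif conv}. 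For the second, the semigroup commutation $\sfP_{T/2} = \sfP_{T_0/2}\sfP_{(T-T_0)/2}$ when $T>T_0$ (symmetric when $T<T_0$) combined with the same smoothing bound reduces matters to $\|(\sfP_{|T-T_0|/2} - I)f^{T_0}\|_{L^2(\mm)}$, which vanishes by strong $L^2$-continuity of $(\sfP_t)$ at the origin.

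Finally, for the Fisher integral, combining the Benamou-Brenier representation \eqref{eq:bbs2} with the time-integrated form of \eqref{eq:energy representation} (using \eqref{eq:reparametrization}) produces the identity
\[
\cost(\mu,\nu) - T\cons(\mu,\nu) = \tfrac12\big(\cH(\mu\,|\,\mm) + \cH(\nu\,|\,\mm)\big) + \tfrac{T}{4}\iint_0^1 |\nabla\log\rho_t^T|^2\rho_t^T\,\De t\,\De\mm,
\]
so that solving for the Fisher integral and invoking the previous two steps delivers its continuity in $T$. The main obstacle is the second step: the $L^2$-convergence of the gradients requires quantitative semigroup smoothing combined with strong $L^2$-continuity, both standard for self-adjoint Markov semigroups but not stated explicitly in the preliminaries.
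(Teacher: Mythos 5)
Your proposal is correct and follows essentially the same route as the paper: the same representation of the cost through $(f^T,g^T)$, the same rewriting of the energy as $-\frac12\int\nabla\sfP_{Tt}f^T\cdot\nabla\sfP_{T(1-t)}g^T\,\De\mm$ at a fixed $t$, and the same algebraic identity expressing the Fisher integral through $\cost(\mu,\nu)$ and $\cons(\mu,\nu)$, all hinging on Lemma \ref{lem:fg unif conv} and the uniform bounds of Lemma \ref{lem:fg bounds}. The only (harmless) deviation is in the energy step, where you apply the $L^2$-smoothing bound $\|\nabla\sfP_t u\|_{L^2(\mm)}\leq (et)^{-1/2}\|u\|_{L^2(\mm)}$ directly to the gradients, whereas the paper integrates by parts and invokes the a priori estimate \eqref{eq:apriori} on $\sfL\sfP_t$; both are standard consequences of the spectral calculus for the self-adjoint generator.
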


\begin{proof}
For $T \mapsto \cost(\mu,\nu)$, by \eqref{eq:schr system} we observe that the cost can be rewritten as
\[
\cost(\mu,\nu) = \int (f^T\log f^T)\sfP_T g^T\,\De\mm + \int (g^T\log g^T)\sfP_T f^T\,\De\mm
\]
and by Lemma \ref{lem:fg unif conv}, the locally uniform $L^\infty$-bounds provided by Lemma \ref{lem:fg bounds} and by dominated convergence it is easy to see that the right-hand side above continuously depends on $T$.

As regards $T \mapsto \cons(\mu,\nu)$, let us start observing that the second identity in \eqref{eq:energy representation} can be equivalently rewritten as
\[
\cons(\mu,\nu) = -\frac{1}{2}\int\nabla\sfP_{Tt}f^T \cdot \nabla\sfP_{T(1-t)}g^T\,\De\mm,
\]
so fix $t \in (0,1)$ and note that, by integration by parts first and Cauchy-Schwarz inequality then, we obtain
\[
\begin{split}
|\mathscr{E}_{T+h}(\mu,\nu) - \cons(\mu,\nu)| & \leq \frac{1}{2}\Big|\int \nabla\sfP_{(T+h)t}f^{T+h} \cdot \nabla(\sfP_{(T+h)(1-t)}g^{T+h}-\sfP_{T(1-t)}g^T)\,\De\mm\Big| \\
& \quad + \frac{1}{2}\Big|\int \nabla(\sfP_{(T+h)t}f^{T+h} - \sfP_{Tt}f^T) \cdot \nabla\sfP_{T(1-t)}g^T)\,\De\mm\Big| \\
& \leq \frac{1}{2}\Big|\int \sfL\sfP_{(T+h)t}f^{T+h} (\sfP_{(T+h)(1-t)}g^{T+h}-\sfP_{T(1-t)}g^T)\,\De\mm\Big| \\
& \quad + \frac{1}{2}\Big|\int (\sfP_{(T+h)t}f^{T+h} - \sfP_{Tt}f^T) \sfL\sfP_{T(1-t)}g^T)\,\De\mm\Big| \\
& \leq \frac{1}{2}\|\sfL\sfP_{(T+h)t}f^{T+h}\|_{L^2(\mm)} \|\sfP_{(T+h)(1-t)}g^{T+h}-\sfP_{T(1-t)}g^T\|_{L^2(\mm)} \\
& \quad + \frac{1}{2} \|\sfP_{(T+h)t}f^{T+h} - \sfP_{Tt}f^T\|_{L^2(\mm)} \|\sfL\sfP_{T(1-t)}g^T\|_{L^2(\mm)}.
\end{split}
\]
The second summand after the last inequality clearly vanishes as $h \to 0$, because of Lemma \ref{lem:fg unif conv}. As concerns the first one, argue as in the proof of Lemma \ref{lem:fg unif conv} and notice that for $h$ sufficiently small, say $|h| < \delta$ with $0 \leq \delta \leq T$, all the functions $f^{T+h}$ are uniformly bounded in $L^\infty(\mm)$ and supported in $\supp(\mu)$, hence also uniformly bounded in $L^2(\mm)$. By the a priori estimate \eqref{eq:apriori}, this is sufficient to conclude that there exists $M>0$ sufficiently large such that $\|\sfL\sfP_{(T+h)t}f^{T+h}\|_{L^2(\mm)} \leq M$ for all $|h|<\delta$. Hence, by Lemma \ref{lem:fg unif conv}, also the first summand after the last inequality converges to 0 in the limit, whence the continuity of $T \mapsto \cons(\mu,\nu)$.

Finally, the continuity of $T \mapsto \iint_0^1 |\nabla\log\rho_t^T|^2\,\De\mu_t^T$ is a pure matter of algebraic manipulation, since by \eqref{eq:bbs2} and \eqref{eq:energy representation}
\[
\iint_0^1 |\nabla\log\rho_t^T|^2\,\De\mu_t^T = \frac{4}{T}\cost(\mu,\nu) - 4\cons(\mu,\nu) - \frac{2}{T}\Big(\ent(\mu\,|\,\mm) + \ent(\nu\,|\,\mm)\Big)
\]
and by the previous discussion the right-hand side is continuous on $(0,\infty)$.
\end{proof}

Let us now prove Theorem \ref{thm:derivcost}.

\begin{proof}[Proof of Theorem \ref{thm:derivcost}]
Consider the map $T \mapsto \mathscr{A}_T(\rho^T,v^T)$, let $h>0$, write
\begin{equation}\label{eq:difference quotient}
\begin{split}
\frac{\mathscr{A}_{T+h}(\rho^{T+h},v^{T+h}) - \mathscr{A}_T(\rho^T,v^T)}{h} & = \frac{\mathscr{A}_{T+h}(\rho^{T+h},v^{T+h}) - \mathscr{A}_{T+h}(\rho^T,v^T)}{h} \\ & \quad + \frac{\mathscr{A}_{T+h}(\rho^T,v^T) - \mathscr{A}_T(\rho^T,v^T)}{h}
\end{split}
\end{equation}
and observe that the first term on the right-hand side is non-positive by optimality of $(\rho^{T+h},v^{T+h})$ for $\mathscr{A}_{T+h}$. For the second one
\[
\mathscr{A}_{T+h}(\rho^T,v^T) - \mathscr{A}_T(\rho^T,v^T) = \Big(\frac{Th}{4} + \frac{h^2}{8}\Big)\iint_0^1 |\nabla\log\rho^T_t|^2 \rho^T_t\De t\De\mm,
\]
so that
\begin{equation}\label{eq:partial derivative}
\lim_{h \downarrow 0}\frac{\mathscr{A}_{T+h}(\rho^T,v^T) - \mathscr{A}_T(\rho^T,v^T)}{h} = \frac{T}{4}\iint_0^1 |\nabla\log\rho^T_t|^2 \rho^T_t\De t\De\mm = (\partial_T\mathscr{A}_T)(\rho^T,v^T),
\end{equation}
where $\partial_T\mathscr{A}_T$ denotes the partial derivative w.r.t.\ $T$ of $(T,\rho,v) \mapsto \mathscr{A}_T(\rho,v)$. Combining these two facts we deduce
\begin{equation}\label{eq:limsup}
\limsup_{h \downarrow 0}\frac{\mathscr{A}_{T+h}(\rho^{T+h},v^{T+h}) - \mathscr{A}_T(\rho^T,v^T)}{h} \leq (\partial_T\mathscr{A}_T)(\rho^T,v^T).
\end{equation}
On the other hand we can also write
\begin{equation}\label{eq:difference quotient 2}
\begin{split}
\frac{\mathscr{A}_{T+h}(\rho^{T+h},v^{T+h}) - \mathscr{A}_T(\rho^T,v^T)}{h} & = \frac{\mathscr{A}_{T+h}(\rho^{T+h},v^{T+h}) - \mathscr{A}_T(\rho^{T+h},v^{T+h})}{h} \\ & \quad + \frac{\mathscr{A}_T(\rho^{T+h},v^{T+h}) - \mathscr{A}_T(\rho^T,v^T)}{h}
\end{split}
\end{equation}
and remark that now the second term on the right-hand side is non-negative by optimality of $(\rho^T,v^T)$ for $\mathscr{A}_T$. As regards the first one,
\[
\mathscr{A}_{T+h}(\rho^{T+h},v^{T+h}) - \mathscr{A}_T(\rho^{T+h},v^{T+h}) = \Big(\frac{Th}{4} + \frac{h^2}{8}\Big)\iint_0^1 |\nabla\log\rho^{T+h}_t|^2 \rho^{T+h}_t\De t\De\mm
\]
and by Lemma \ref{lem:energy is continuous} we infer that
\begin{equation}\label{eq:partial derivative 2}
\begin{split}
\lim_{h \downarrow 0}\frac{\mathscr{A}_{T+h}(\rho^{T+h},v^{T+h}) - \mathscr{A}_T(\rho^{T+h},v^{T+h})}{h} & \geq \frac{T}{4}\iint_0^1 |\nabla\log\rho^T_t|^2 \rho^T_t\De t\De\mm \\
& = (\partial_T\mathscr{A}_T)(\rho^T,v^T).
\end{split}
\end{equation}
This yields
\[
\liminf_{h \downarrow 0}\frac{\mathscr{A}_{T+h}(\rho^{T+h},v^{T+h}) - \mathscr{A}_T(\rho^T,v^T)}{h} \geq (\partial_T\mathscr{A}_T)(\rho^T,v^T),
\]
which together with \eqref{eq:limsup} implies that $T \mapsto \mathscr{A}_T(\rho^T,v^T)$ is everywhere right differentiable on $(0,\infty)$. Left differentiability follows by an analogous argument. Indeed if $h<0$, then the first term on the right-hand side in \eqref{eq:difference quotient} is non-negative and \eqref{eq:partial derivative} still holds true with $h \uparrow 0$ instead of $h \downarrow 0$, whence
\[
\liminf_{h \uparrow 0}\frac{\mathscr{A}_{T+h}(\rho^{T+h},v^{T+h}) - \mathscr{A}_T(\rho^T,v^T)}{h} \geq (\partial_T\mathscr{A}_T)(\rho^T,v^T).
\]
Applying the same considerations to \eqref{eq:difference quotient 2} and \eqref{eq:partial derivative 2} gives
\[
\limsup_{h \uparrow 0}\frac{\mathscr{A}_{T+h}(\rho^{T+h},v^{T+h}) - \mathscr{A}_T(\rho^T,v^T)}{h} \leq (\partial_T\mathscr{A}_T)(\rho^T,v^T)
\]
and combining this inequality with the previous one entails the desired left differentiability. Therefore $T \mapsto \mathscr{A}_T(\rho^T,v^T)$ is everywhere differentiable on $(0,\infty)$, a fortiori continuous therein, and
\[
\frac{\De}{\De T} \mathscr{A}_T(\rho^T,v^T) = (\partial_T\mathscr{A}_T)(\rho^T,v^T).
\]
Therefore $T \mapsto T\cost(\mu,\nu)$ is everywhere differentiable (hence continuous) on $(0,\infty)$ as well and, by \eqref{eq:bbs2} and the very definition of $\cost(\mu,\nu)$ and $\cons(\mu,\nu)$,
\[
\begin{split}
\frac{\De}{\De T}\big(T\cost(\mu,\nu)\big) & = \frac12\Big(\ent(\mu\,|\,\mm) + \ent(\nu\,|\,\mm)\Big) + (\partial_T\mathscr{A}_T)(\rho^T,v^T) \\
& = \frac12\Big(\ent(\mu\,|\,\mm) + \ent(\nu\,|\,\mm)\Big) + \frac{T}{4}\iint_{0}^1|\nabla \log \rho^T_t|^2 \rho^{T}_t \De \mm \\
& = \cost(\mu,\nu) - T\cons(\mu,\nu),
\end{split}
\]
which proves both formulas for the first derivative of $T \mapsto T\cost(\mu,\nu)$, thanks to \eqref{eq:reparametrization}. Now notice that the right-hand side above is continuous on $(0,\infty)$: $T \mapsto \cost(\mu,\nu)$ is continuous on $(0,\infty)$ since so is $T \mapsto T\cost(\mu,\nu)$, while $T \mapsto T\cons(\mu,\nu)$ is continuous by Lemma \ref{lem:energy is continuous}. Hence $T \mapsto T\cost(\mu,\nu)$ belongs to $C^1((0,\infty))$. The fact that $T \mapsto T\cost(\mu,\nu)$ is also twice differentiable a.e.\ follows from the fact that
\begin{equation}\label{eq:un'altra formula}
\frac{\De}{\De T}\big(T\cost(\mu,\nu)\big) = \frac12\Big(\ent(\mu\,|\,\mm) + \ent(\nu\,|\,\mm)\Big) + \frac{T}{4}\iint_{0}^1|\nabla \log \rho^T_t|^2 \rho^{T}_t \De \mm
\end{equation}
and the last term on the right-hand side is the product of a linear function and a monotone (non-increasing) one, thanks to Lemma \ref{lem:incr fish}.

This concludes the proof of (ii). Claim (i) is a straightforward consequence as well as the formula for the second derivative of $T \mapsto T\cost(\mu,\nu)$.
\end{proof}

From Theorem \ref{thm:derivcost}, the proof of Theorem \ref{thm short time} follows.

\begin{proof}[Proof of Theorem \ref{thm short time}]
As regards \eqref{short time bound}, we only prove the upper bound, the lower bound being an immediate consequence of the Benamou-Brenier formulation \eqref{eq:bbs2}. Since $T\mapsto T \cost(\mu,\nu)$ belongs to $C^1((0,\infty))$ we can use the fundamental theorem of calculus that gives for all $\varepsilon>0$
\[
\begin{split}
T\cost(\mu,\nu)-\ve \mathscr{C}_{\ve}(\mu,\nu) & = \frac{\cH(\mu\,|\,\mm) +\cH(\nu\,|\,\mm)}{2}(T-\ve) + \int_{\ve}^{T} \frac{S}{4}\Big( \iint_0^1|\nabla \log \rho^S_t|^2\rho^S_t\De t\De\mm \Big) \De S \\
& \leq \frac{\cH(\mu\,|\,\mm) + \cH(\nu\,|\,\mm)}{2}(T-\ve) + \Big(\frac{T^2}{8} - \frac{\ve^2}{8}\Big) \iint_0^1|\nabla \log \rho^0_t|^2\rho^0_t\De t\De\mm,
\end{split}
\]
where the inequality is motivated by Lemma \ref{lem:incr fish}. The conclusion follows letting $\varepsilon\rightarrow 0$ and using the convergence of the rescaled entropic cost towards $W^2_2(\mu,\nu)/2$. 

In order to prove \eqref{cost taylor}, assume $\iint_0^1|\nabla \log \rho^0_t|^2\rho^0_t\De t\De\mm < \infty$ and accept, for the moment, that the following three auxiliary facts hold: (a) $T \mapsto \iint_0^1|\nabla \log \rho^T_t|^2\rho^T_t\De t\De\mm$ is continuous at $T=0$; (b) the first derivative of $T \mapsto T\cost(\mu,\nu)$ is continuous up to $T=0$; (c) $T \mapsto T\cost(\mu,\nu)$ is twice differentiable at $T=0$. Then (c) implies that $T \mapsto T\cost(\mu,\nu)$ has a Taylor expansion of the form
\[
T\cost(\mu,\nu) = \frac{1}{2}W_2^2(\mu,\nu) + A T + B \frac{T^2}{2}+o(T^2).
\]
The coefficients can be identified thanks to Theorem \ref{thm:derivcost}. Indeed
\[
A = \frac{\De}{\De T}\Big|_{T=0^+}(T\cost(\mu,\nu)) = \lim_{T' \downarrow 0}\frac{\De}{\De T}\Big|_{T=T'}(T\cost(\mu,\nu)) = \frac{1}{2}\Big(\ent(\mu\,|\,\mm) + \ent(\nu\,|\,\mm)\Big),
\]
the second identity being a consequence of (b), the third one following from \eqref{eq:un'altra formula} and the finiteness of $\iint_0^1|\nabla \log \rho^0_t|^2\rho^0_t\De t\De\mm$. As regards $B$, this is given by
\[
\begin{split}
B & = \frac{\De^2}{\De T^2} \Big|_{T=0^+}(T\cost(\mu,\nu)) = \lim_{T' \downarrow 0}\frac{1}{T'}\Big(\frac{\De}{\De T} \Big|_{T=T'}(T\cost(\mu,\nu)) - \frac{\De}{\De T} \Big|_{T=0^+}(T\cost(\mu,\nu))\Big) \\
& = \lim_{T' \downarrow 0} \frac{1}{4}\iint_0^1|\nabla \log \rho^{T'}_t|^2\rho^{T'}_t\De t\De\mm = \frac{1}{4}\iint_0^1|\nabla \log \rho^0_t|^2\rho^0_t\De t\De\mm
\end{split}
\]
thanks to (a). Therefore it only remains to show (a), (b), and (c); we shall start with the first one. On the one hand, by Lemma \ref{lem:incr fish}
\begin{equation}\label{eq:limsup fisher}
\limsup_{T \downarrow 0} \iint_0^1|\nabla \log \rho^T_t|^2\rho^T_t\De t\De\mm \leq \iint_0^1|\nabla \log \rho^0_t|^2\rho^0_t\De t\De\mm.
\end{equation}
On the other hand, as $\CD(\kappa,N)$ holds, we can and shall rely on powerful facts established in \cite{GigTam18}: by Propositions 5.1 and 5.4 therein we know that 
\begin{equation}\label{eq:weak star}
\rho_t^T \stackrel{*}{\rightharpoonup} \rho_t^0 \quad \textrm{in } L^\infty(\mm), \qquad \forall t \in [0,1]
\end{equation}
and by \cite[Lemma 4.2]{GigTam18} we also know that for all $\bar{x} \in M$ there exist $C',C'',r > 0$ such that
\begin{equation}\label{eq:exponential decrease}
\rho_t^T(x) \leq C'\exp\Big(-\frac{C''\sfd^2_g(x,\bar{x})}{T}\Big), \qquad \forall x \in M \setminus B_r(\bar{x}),\, t \in [0,1],\, T \in (0,1).
\end{equation}
With this said, let $B \subset M$ be any Borel set and let us prove that
\begin{equation}\label{eq:intermediate}
(\rho_t^T\mm)(B) \to (\rho_t^0\mm)(B) \quad \textrm{as } T \downarrow 0, \qquad \forall t \in [0,1].
\end{equation}
To this end, since $\mu,\nu$ are compactly supported, there exists a bounded set containing the supports of $\rho_t^0$ for all $t \in [0,1]$, so that up to choose a larger $r$ we can assume that $\supp(\rho_t^0) \subset B_r(\bar{x})$ for all $t \in [0,1]$. By \eqref{eq:weak star} and the trivial fact that $\mathds{1}_{B \cap B_r(\bar{x})} \in L^1(\mm)$ this implies that $(\rho_t^T\mm)(B \cap B_r(\bar{x})) \to (\rho_t^0\mm)(B \cap B_r(\bar{x}))$ as $T \downarrow 0$ for all $t \in [0,1]$. By \eqref{eq:exponential decrease} and the fact that under the $\CD(\kappa,N)$ condition the function in the right-hand side of \eqref{eq:exponential decrease} is integrable and converges monotonically to 0 as $T \downarrow 0$, it also follows 
\[
(\rho_t^T\mm)(B \setminus B_r(\bar{x})) \to 0 = (\rho_t^0\mm)(B \setminus B_r(\bar{x})) \quad \textrm{as } T \downarrow 0,
\]
whence \eqref{eq:intermediate}. By \cite[Theorem 7.6]{AmbrosioGigliSavare11} this implies
\[
\liminf_{T \downarrow 0} \int |\nabla \log \rho^T_t|^2\rho^T_t \De\mm \geq \int |\nabla \log \rho^0_t|^2\rho^0_t\De t\De\mm, \qquad \forall t \in [0,1]
\]
whence by Fatou's lemma
\[
\liminf_{T \downarrow 0}\iint_0^1|\nabla \log \rho^T_t|^2\rho^T_t\De t\De\mm \geq \iint_0^1|\nabla \log \rho^0_t|^2\rho^0_t\De t\De\mm.
\]
Combining this inequality with \eqref{eq:limsup fisher} yields the desired continuity at $T=0$ and thus (a).

As regards (b), by \eqref{short time bound} and the finiteness of $\iint_0^1|\nabla \log \rho^0_t|^2\rho^0_t\De t\De\mm$ it immediately follows that
\begin{equation}\label{eq:quasi finito}
\limsup_{T \downarrow 0} \Big\{\cost(\mu,\nu) - \frac{1}{2T}W_2^2(\mu,\nu)\Big\} \leq \frac{1}{2}\Big(\ent(\mu\,|\,\mm) + \ent(\nu\,|\,\mm)\Big).
\end{equation}
On the other hand, by following the argument of Theorem \ref{thm:derivcost} and recalling \eqref{eq:benamou-brenier} we see that
\[
\begin{split}
T\cost(\mu,\nu) - \frac{1}{2}W_2^2(\mu,\nu) & = \frac{T}{2}\Big(\ent(\mu\,|\,\mm) + \ent(\nu\,|\,\mm)\Big) + \mathscr{A}_T(\rho^T,v^T) - \mathscr{A}_0(\rho^0,v^0) \\
& \geq \frac{T}{2}\Big(\ent(\mu\,|\,\mm) + \ent(\nu\,|\,\mm)\Big) + \mathscr{A}_T(\rho^T,v^T) - \mathscr{A}_0(\rho^T,v^T) \\
& = \frac{T}{2}\Big(\ent(\mu\,|\,\mm) + \ent(\nu\,|\,\mm)\Big) + \frac{T^2}{8}\iint_0^1 |\nabla\log\rho_t^T|^2\rho_t^T \De t\De\mm,
\end{split}
\]
where $(v_t^0)$ is any choice of gradient of intermediate Kantorovich potentials associated with the geodesic $(\rho_t^0\mm)$, and by (a) this implies
\[
\liminf_{T \downarrow 0} \Big\{\cost(\mu,\nu) - \frac{1}{2T}W_2^2(\mu,\nu)\Big\} \geq \frac{1}{2}\Big(\ent(\mu\,|\,\mm) + \ent(\nu\,|\,\mm)\Big).
\]
Combining this inequality with \eqref{eq:quasi finito} yields the differentiability of $T \mapsto T\cost(\mu,\nu)$ at $T=0$ and by \eqref{eq:un'altra formula} we see that the derivative is continuous there. Finally, by \eqref{eq:bbs2}, the computations above for determining the coefficients $A$ and $B$, and the properties (a) and (b) it is not difficult to see that (c) holds too.
\end{proof}

\subsection{Proof of Theorem \ref{thm:longtime} and Corollary \ref{cor:logsob}}\label{subsec:strong}

In this section we provide the reader with a second proof of Theorem \ref{thm:longtime}, whose crucial ingredient is a long-time analogue of Lemma \ref{lem:fg unif conv}. This is precisely the content of the next result. With respect to the proof given in Section \ref{subsec:gamma}, here the marginals $\mu,\nu$ satisfy \eqref{hyp marginals weak}, hence a condition stronger than \eqref{hyp marginals weakest}; as a consequence we gain information on the long-time behavior of $f^T$ and $g^T$ separately, determining their limits as $T \to \infty$. Let us point out again that \eqref{hyp ref} below is required only when $\kappa=0$.

\begin{lemma}\label{lem:fg conv at infinity}
Under the assumptions \eqref{hyp pot} with $\kappa \geq 0$, \eqref{hyp ref} and \eqref{hyp marginals weak}, for the functions $f^T,g^T$ given by \eqref{eq:schr system} it holds
\begin{equation}\label{eq:limit infinity}
\lim_{T \to \infty}f^T = \rho, \qquad \lim_{T \to \infty}g^T = \sigma
\end{equation}
where both limits are in $L^p(\mm)$ for any $p \in [1,\infty)$.

As a byproduct
\begin{equation}\label{eq:limit infinity 2}
\lim_{T \to \infty}\sfP_T f^T = \lim_{T \to \infty}\sfP_T g^T = 1,
\end{equation}
where both limits are in $L^p(\mm)$ for any $p \in [1,\infty)$.
\end{lemma}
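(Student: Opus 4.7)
The plan is to exploit the Lipschitz regularization \eqref{eq:lip reg} of the semigroup, which forces $\sfP_T g^T$ and $\sfP_T f^T$ to be asymptotically constant, and to use the invariance of $\mm$ together with the normalization \eqref{eq:gauge} to pin the constant down to $1$. Once $\sfP_T g^T \to 1$ and $\sfP_T f^T \to 1$ in $L^p(\mm)$ are established, the main claim \eqref{eq:limit infinity} follows at once from the Schr\"odinger system \eqref{eq:schr system}, rewritten as $f^T-\rho = f^T(1-\sfP_T g^T)$, since $f^T$ is uniformly bounded in $L^\infty(\mm)$; the companion statement \eqref{eq:limit infinity 2} is itself the main intermediate conclusion.

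Setting up the Lipschitz decay requires a uniform $L^\infty$-control of both $f^T$ and $g^T$ for $T$ large, which is only half-given by Lemma \ref{lem:fg bounds}(ii). I would first use the upper Gaussian bound \eqref{eq:upperbound} (with $C_\kappa=0$, since $\kappa\geq 0$) combined with $\mm(B_{\sqrt{T/2}}(x))\to 1$ (using \eqref{hyp ref}) to get a $T$-uniform upper bound on $\sfP_T g^T=\int g^T\hp_T\,\De\mm \leq \|\hp_T(x,\cdot)\|_{L^\infty}\|g^T\|_{L^1}$ on $\supp(\mu)$; plugged into $\rho = f^T \sfP_T g^T$, this yields a uniform lower bound on $\|f^T\|_{L^1(\mm)}$, and the second estimate of Lemma \ref{lem:fg bounds}(ii) then upgrades to a uniform $L^\infty$-bound on $g^T$. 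With these bounds in hand, \eqref{eq:lip reg} reads
\[
\|\nabla \sfP_T g^T\|_{L^\infty(\mm)},\; \|\nabla \sfP_T f^T\|_{L^\infty(\mm)} \;\leq\; \frac{C}{\sqrt{I_\kappa(T)}} \;\xrightarrow{T\to\infty}\; 0,
\]
so that for any fixed $x_0\in M$ and any $x\in M$, $|\sfP_T g^T(x)-\sfP_T g^T(x_0)| \leq \|\nabla\sfP_T g^T\|_{L^\infty}\sfd(x,x_0) \to 0$ pointwise.

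From here the rest is direct. Along any subsequence $T_n\to\infty$ for which $\sfP_{T_n}g^{T_n}(x_0)\to a^*$ (extracted by the uniform $L^\infty$-bound), the previous pointwise almost-constancy together with dominated convergence --- applicable since $\mm$ is a probability and the family is uniformly bounded --- gives $\sfP_{T_n}g^{T_n}\to a^*$ in $L^p(\mm)$; passing to the limit in the identity $\int \sfP_{T_n}g^{T_n}\,\De\mm = \int g^{T_n}\,\De\mm = 1$, which uses the invariance of $\mm$ and \eqref{eq:gauge}, forces $a^*=1$. So the whole family $\sfP_T g^T$ converges to $1$ in $L^p(\mm)$. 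The analogous argument for $\sfP_T f^T$ is the same modulo proving $\|f^T\|_{L^1(\mm)}\to 1$, which is obtained by writing $1 = \int \sigma\,\De\mm = \int g^T \sfP_T f^T\,\De\mm = \int (\sfP_T g^T)\,f^T\,\De\mm$ (self-adjointness of $\sfP_T$ in $L^2(\mm)$) and letting $T\to\infty$ using the convergence already established. I expect the main obstacle to be the preliminary step of turning Lemma \ref{lem:fg bounds}(ii) into a genuine uniform $L^\infty$-bound on $g^T$, as it is the only point where one has to engage nontrivially with the heat-kernel asymptotics; the rest is a clean loop between gradient decay, dominated convergence, and the identification of the mean value.
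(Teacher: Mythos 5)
Your argument is correct, and after the preliminary bounds (which you derive exactly as the paper does: the Gaussian upper bound \eqref{eq:upperbound} with $C_\kappa=0$ plus the normalization \eqref{eq:gauge} give $\|f^T\|_{L^1(\mm)}\geq c>0$ uniformly for $T\geq T_0$, and Lemma \ref{lem:fg bounds}(ii) then upgrades this to a uniform $L^\infty$-bound on $g^T$), it departs from the paper in the core step. The paper extracts weak-$*$ limits $\tilde f,\tilde g$ of $f^T,g^T$ in $L^\infty(\mm)$ via Banach--Alaoglu, proves $\sfP_Tf^T\to\int\tilde f\,\De\mm$ by combining the $L^p$-contraction of the semigroup with the $L^2$-ergodicity \eqref{eq:ergodicity} (and an interpolation step for $p>2$), and then identifies $\tilde f=\rho$, $\tilde g=\sigma$ from the Schr\"odinger system. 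You instead never touch the densities' weak limits: you use the Lipschitz regularization \eqref{eq:lip reg}, whose constant $I_\kappa(T)^{-1/2}$ tends to $0$ precisely because $\kappa\geq 0$, to show that $\sfP_Tg^T$ becomes asymptotically constant (pointwise, with the noncompactness of $M$ absorbed by dominated convergence against the probability $\mm$), and you pin the constant to $1$ by mass conservation $\int\sfP_Tg^T\,\De\mm=\|g^T\|_{L^1(\mm)}=1$; the loop closes with $1=\int(\sfP_Tg^T)f^T\,\De\mm$ forcing $\|f^T\|_{L^1(\mm)}\to1$, and \eqref{eq:limit infinity} follows from $f^T-\rho=f^T(1-\sfP_Tg^T)$ together with the uniform $L^\infty$-bound. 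What your route buys is a bypass of both weak-$*$ compactness and the ergodicity statement \eqref{eq:ergodicity} (replaced by the curvature-driven gradient decay, which is quantitative: the oscillation of $\sfP_Tg^T$ on any bounded set decays like $I_\kappa(T)^{-1/2}$, exponentially for $\kappa>0$); what it costs is that it leans on $\kappa\geq 0$ through \eqref{eq:lip reg} rather than through ergodicity, so it is no more general, and the identification of the constant still goes through a soft subsequence argument. Both proofs are complete and of comparable length.
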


\begin{proof}
Let $T_0 > 0$ and denote by $C$ the positive constant provided by Lemma \ref{lem:fg bounds}-(ii) on the interval $[T_0,\infty)$. From the first bound in \eqref{eq:fg infinity bounds} we immediately deduce that $(f^T)_{T \geq T_0}$ is bounded in $L^\infty(\mm)$, hence in $L^p(\mm)$ for all $p \in [1,\infty]$, as already argued in the proof of Lemma \ref{lem:fg unif conv}. From \eqref{eq:upperbound} with $C_\kappa=0$ (since $\kappa \geq 0$), \eqref{eq:gauge}, the fact that $\mu,\nu$ have bounded support and arguing as in Lemma \ref{lem:fg unif conv} we also have
\[
1 = \int f^T(x)g^T(y)\hp_T(x,y)\De\mm(x)\De\mm(y) \leq C' \|f^T\|_{L^1(\mm)}, \qquad \forall T \geq T_0
\]
for some $C' > 0$, whence 
\begin{equation}
\label{eq:jaures}
\|f^T\|_{L^1(\mm)} \geq \frac{1}{C}, \qquad \forall T \geq T_0. 
\end{equation}
Plugging this inequality into the second bound in \eqref{eq:fg infinity bounds} yields that also $(g^T)_{T \geq T_0}$ is bounded in $L^\infty(\mm)$, thus in $L^p(\mm)$ for all $p \in [1,\infty]$. As a direct consequence of this and of the maximum, we deduce that also $(\sfP_T f^T)_{T \geq T_0}$ and $(\sfP_T g^T)_{T \geq T_0}$ are bounded in $L^\infty(\mm)$.

We are now in the position to prove \eqref{eq:limit infinity}. Indeed, by Banach-Alaoglu theorem there exist a (not relabeled) subsequence and functions $\tilde{f},\tilde{g} \in L^\infty(\mm)$ such that $f^T \stackrel{*}{\rightharpoonup} \tilde{f}$ and $g^T \stackrel{*}{\rightharpoonup} \tilde{g}$ in $L^\infty(\mm)$ as $T \to \infty$. We claim that
\begin{equation}\label{eq:claim2}
\lim_{T \to \infty}\sfP_T f^T = \int \tilde{f}\,\De\mm \qquad \text{and} \qquad \lim_{T \to \infty}\sfP_T g^T = \int \tilde{g}\,\De\mm,
\end{equation}
where both limits have to be understood in $L^p(\mm)$ for $p \in [1,\infty)$. To this aim observe that
\begin{equation}\label{eq:king gizzard}
\begin{split}
\|\sfP_T f^T - \int \tilde{f}\,\De\mm\|_{L^p(\mm)} & \leq \|\sfP_T f^T - \sfP_T\tilde{f}\|_{L^p(\mm)} + \|\sfP_T\tilde{f} - \int \tilde{f}\,\De\mm\|_{L^p(\mm)} \\
& \leq \|\sfP_{T_0}f^T - \sfP_{T_0}\tilde{f}\|_{L^p(\mm)} + \|\sfP_T\tilde{f} - \int \tilde{f}\,\De\mm\|_{L^p(\mm)},
\end{split}
\end{equation}
where the second inequality is motivated by the fact that $\sfP_T = \sfP_{T-T_0}\circ \sfP_{T_0}$ and $\sfP_{T-T_0}$ is a contraction in $L^p(\mm)$. Therefore, arguing as in the proof of Lemma \ref{lem:fg unif conv} we can see that the first term on the right-hand side above vanishes as $T \to \infty$. On the other hand, the ergodicity of $\sfP_T$ and \eqref{hyp ref} entail that
\begin{equation}
\label{eq:and the lizard wizard}
\lim_{T \to \infty}\sfP_T\tilde{f} = \int\tilde{f}\,\De\mm, \qquad \text{in } L^2(\mm),
\end{equation}
hence in $L^p(\mm)$ for all $p \in [1,2]$, while for $p>2$ it is sufficient to observe that
\[
\begin{split}
\|\sfP_T\tilde{f} - \int \tilde{f}\,\De\mm\|_{L^p(\mm)}^p & \leq \|\sfP_T\tilde{f} - \int \tilde{f}\,\De\mm\|_{L^\infty(\mm)}^{p-2}\|\sfP_T\tilde{f} - \int \tilde{f}\,\De\mm\|_{L^2(\mm)}^2 \\
& \leq \big(\|\tilde{f}\|_{L^\infty(\mm)} + \|\tilde{f}\|_{L^1(\mm)}\big)^{p-2} \|\sfP_T\tilde{f} - \int \tilde{f}\,\De\mm\|_{L^2(\mm)}^2
\end{split}
\]
and use \eqref{eq:and the lizard wizard}. Therefore, also the second term on the second line on the right-hand side in \eqref{eq:king gizzard} converges to 0 as $T \to \infty$ and this proves the first identity in \eqref{eq:claim2}. An analogous argument leads to the second one. 

As a consequence, up to extract a further (not relabeled) subsequence, the limits in \eqref{eq:claim2} hold $\mm$-a.e.\ and if we pass to the limit as $T \to \infty$ in the Schr\"odinger system \eqref{eq:schr system} we get
\[
\rho = \tilde{f}\int\tilde{g}\,\De\mm, \qquad \sigma = \tilde{g}\int\tilde{f}\,\De\mm
\]
and because of \eqref{eq:gauge} it must hold $\|\tilde{g}\|_{L^1(\mm)} = 1$, whence $\tilde{f} = \rho$ and this in turn implies $\tilde{g} = \sigma$.

It is now sufficient to combine this information with \eqref{eq:claim2} to get \eqref{eq:limit infinity 2}.
\end{proof}

We are now in the position to determine the long-time behavior of the entropic cost.

\begin{proof}[Proof of Theorem \ref{thm:longtime}]
Let us first observe that for all $T_0>0$ there exists $C>0$ such that for all $T \geq T_0$ it holds
\begin{subequations}
\begin{align}
\label{eq:two-sided bound f}
& C^{-1} \leq \sfP_T f^T \leq C, \qquad \mm\textrm{-a.e. in } \supp(\nu), \\
\label{eq:two-sided bound g}
& C^{-1} \leq \sfP_T g^T \leq C, \qquad \mm\textrm{-a.e. in } \supp(\mu).
\end{align}
\end{subequations}
Indeed, both upper bounds have been proven in Lemma \ref{lem:fg conv at infinity}, while the lower ones can be deduced relying on \eqref{eq:lowerbound} and the boundedness of $\supp(\mu)$, $\supp(\nu)$. More precisely, for \eqref{eq:two-sided bound f} there exists $C>0$ such that
\[
\sfP_T g^T(x) = \int g^T(y)\hp_T(x,y)\,\De\mm \geq C \|g^T\|_{L^1(\mm)} = C_\delta, \qquad \forall x \in \supp(\mu),\,\forall T \geq T_0,
\]
the last identity being motivated by \eqref{eq:gauge}. For the same reason and taking \eqref{eq:jaures} into account
\[
\sfP_T f^T(x) = \int f^T(y)\hp_T(x,y)\,\De\mm \geq C \|f^T\|_{L^1(\mm)} \geq \frac{C}{C'}, \qquad \forall x \in \supp(\nu),\,\forall T \geq T_0,
\]
which proves also \eqref{eq:two-sided bound g}. With this said, by the very definition of the entropic cost and its equivalence with \eqref{eq:schr system} we have
\[
\cost(\mu,\nu) = \int\log f^T\,\De\mu + \int\log g^T\,\De\nu = \ent(\mu\,|\,\mm) + \ent(\nu\,|\,\mm) - \int\log \sfP_T f^T\,\De\nu - \int\log \sfP_T g^T\,\De\mu
\]
and by \eqref{eq:two-sided bound f}, \eqref{eq:two-sided bound g}, \eqref{eq:limit infinity 2} and the dominated convergence theorem we can pass to the limit as $T \to \infty$ in the identity above, thus getting
\[
\lim_{T \to \infty}\cost(\mu,\nu) = \ent(\mu\,|\,\mm) + \ent(\nu\,|\,\mm).
\]
\end{proof}

As concerns Corollary \ref{cor:logsob}, let us recall an approximation result, whose proof can be found in \cite[Lemma 3.1]{GLRT19}.

\begin{lemma}\label{lem:approximation}
Let $\mu \in \cP(M)$ with $\ent(\mu\,|\,\mm) < \infty$ and
\[
\int |\nabla\log\Big(\frac{\De\mu}{\De\mm}\Big)|^2\De\mu < \infty.
\]
Then there exists $(\mu_n) \subset \cP(M)$ with $\mu_n = \rho_n\mm$, $\rho_n \in C^\infty_c(M)$ such that $\ent(\mu_n\,|\,\mm) \to \ent(\mu\,|\,\mm)$ and
\[
\int |\nabla\log\Big(\frac{\De\mu_n}{\De\mm}\Big)|^2\De\mu_n \to \int |\nabla\log\Big(\frac{\De\mu}{\De\mm}\Big)|^2\De\mu
\]
as $n \to \infty$.
\end{lemma}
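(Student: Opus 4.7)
The plan is to obtain $\rho_n$ from $\rho$ by a two-step regularisation scheme: first a combined spatial cut-off and value truncation, then a mollification step. Fix a base point $\bar x\in M$ and smooth cut-off functions $\chi_R:M\to[0,1]$, $R>0$, with $\chi_R\equiv 1$ on $B_R(\bar x)$, $\supp(\chi_R)\subset B_{R+1}(\bar x)$ and $|\nabla\chi_R|\le 2$. For parameters $0<\veps<1<L$, I would set
\bes
\tilde\rho_{R,L,\veps} := \frac{1}{Z_{R,L,\veps}}\,\chi_R\,\bigl((\rho\wedge L)\vee\veps\mathds{1}_{B_{R+1}(\bar x)}\bigr),
\ees
where $Z_{R,L,\veps}$ is the normalising constant making $\tilde\rho_{R,L,\veps}\mm\in\cP(M)$. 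This density is compactly supported, bounded above and, on its support, bounded away from zero. Since $\rho|\log\rho|\in L^1(\mm)$ (because $\ent(\mu\,|\,\mm)<\infty$) and $|\nabla\log\rho|^2\rho\in L^1(\mm)$ by assumption, dominated convergence applied first as $\veps\downarrow 0$ and $L\uparrow\infty$ and then as $R\uparrow\infty$ yields convergence of the entropy and of the Fisher information of $\tilde\rho_{R,L,\veps}\mm$ to those of $\mu$.

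The second step consists in passing from $\tilde\rho_{R,L,\veps}$ to a function in $C^\infty_c(M)$. I would cover the compact set $\supp(\tilde\rho_{R,L,\veps})$ by finitely many coordinate charts and, via a subordinate partition of unity, convolve in each chart with a standard Euclidean mollifier $\eta_s$. The resulting $\hat\rho_{R,L,\veps,s}\in C^\infty_c(M)$ converges to $\tilde\rho_{R,L,\veps}$ strongly in $W^{1,p}_{loc}$ for all $p\in[1,\infty)$ as $s\downarrow 0$; combined with the uniform strict positivity of $\tilde\rho_{R,L,\veps}$ on its support (inherited by $\hat\rho_{R,L,\veps,s}$ up to a factor close to $1$ for small $s$), this gives the convergence of both $\ent(\hat\rho_{R,L,\veps,s}\mm\,|\,\mm)$ and $\int|\nabla\log\hat\rho_{R,L,\veps,s}|^2\hat\rho_{R,L,\veps,s}\,\De\mm$ to the corresponding quantities for $\tilde\rho_{R,L,\veps}\mm$. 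A diagonal extraction then produces a sequence $(\rho_n)\subset C^\infty_c(M)$ with the required convergence properties.

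The main obstacle lies in the cut-off step, where $|\nabla\log(\chi_R\rho)|^2$ contains the term $|\nabla\log\chi_R|^2$, which is unbounded near $\partial\supp(\chi_R)$. This is precisely why the preliminary lower truncation by $\veps\mathds{1}_{B_{R+1}(\bar x)}$ is imposed: on the annular region $\{0<\chi_R<1\}$ the modified density is bounded both above and below, so the spurious contribution from $|\nabla\log\chi_R|^2$ to the Fisher information is controlled by $\|\nabla\chi_R\|_\infty^2\,\mm\bigl(B_{R+1}(\bar x)\setminus B_R(\bar x)\bigr)$ times a uniform constant depending only on $\veps$ and $L$. Choosing the parameters with $\veps=\veps(R)\to 0$ and $L=L(R)\to\infty$ slowly enough compared to $R\to\infty$ ensures that this error vanishes, so that all the limits can be taken consistently and that, simultaneously, the original Fisher information of $\mu$ is recovered through dominated convergence applied to $|\nabla\log\rho|^2\rho\mathds{1}_{\{\veps\le\rho\le L\}\cap B_R(\bar x)}$.
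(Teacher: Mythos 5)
The paper itself does not prove this lemma; it defers to \cite[Lemma 3.1]{GLRT19}. Your truncate--cut-off--mollify scheme is a natural route, but it rests on a claim that is false and that you invoke at both delicate points. The density $\tilde\rho_{R,L,\veps}=Z^{-1}\chi_R\bigl((\rho\wedge L)\vee\veps\mathds{1}_{B_{R+1}(\bar x)}\bigr)$ is \emph{not} bounded away from zero on its support: $\chi_R$ decreases continuously from $1$ to $0$ on the annulus, so $\tilde\rho_{R,L,\veps}$ takes every value between (roughly) $\veps/Z$ and $0$ there. In the cut-off step this false claim is only a mis-justification: the term to control is $Z^{-1}\int|\nabla\chi_R|^2\chi_R^{-1}\rho_{L,\veps}\,\De\mm$ (a single power of $\chi_R$ in the denominator, since $|\nabla\log\chi_R|^2\tilde\rho=Z^{-1}|\nabla\chi_R|^2\chi_R^{-1}\rho_{L,\veps}$), and the lower truncation by $\veps$ is irrelevant to it; what one actually needs is boundedness of $|\nabla\chi_R|^2/\chi_R$, which follows either by taking $\chi_R=\psi_R^2$ or from the elementary inequality $|\nabla\chi|^2\le 2\chi\,\|\nabla^2\chi\|_\infty$ for nonnegative $C^2$ functions. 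With that fix the error is of order $L\,\mm(B_{R+1}(\bar x)\setminus B_R(\bar x))$ plus a cross term handled by Cauchy--Schwarz, and a suitable coupling $L=L(R)$ makes it vanish. So this part is repairable.

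The mollification step is the genuine gap. You deduce convergence of the Fisher information from ``the uniform strict positivity of $\tilde\rho_{R,L,\veps}$ on its support, inherited by $\hat\rho$ up to a factor close to $1$'', but in a boundary layer of width $s$ around $\partial\supp(\tilde\rho_{R,L,\veps})$ the mollified density is arbitrarily small while its gradient is not, so the integrand $|\nabla\hat\rho|^2/\hat\rho$ is not controlled there; $W^{1,p}_{loc}$ convergence of $\hat\rho$ gives lower semicontinuity of the Fisher information but no upper bound, hence no convergence (and not even obvious finiteness). To close this you need a structural input: either the joint convexity of $(a,b)\mapsto|b|^2/a$, which via Jensen yields $|\nabla(f\ast\eta_s)|^2/(f\ast\eta_s)\le\bigl(|\nabla f|^2/f\bigr)\ast\eta_s$ and hence the missing upper bound (modulo the extra work caused by the weight $e^{-2U}$ and by the partition of unity, which do not commute with Euclidean convolution), or, more simply, recast everything in terms of $u=\sqrt{\rho}$: the hypotheses say precisely that $u\in W^{1,2}(\mm)$ with $4\int|\nabla u|^2\,\De\mm$ equal to the Fisher information, so one may approximate $u$ in $W^{1,2}(\mm)$ by $u_n\in C^\infty_c(M)$, set $\rho_n=u_n^2/\|u_n\|_{L^2(\mm)}^2$, obtain the Fisher information convergence for free, and only then check the entropy convergence by uniform integrability. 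As written, your second step does not go through.
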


We can now provide an ``entropic'' proof of the logarithmic Sobolev inequality.

\begin{proof}[Proof of Corollary \ref{cor:logsob}]
First of all, we can assume that the right-hand side in \eqref{eq:logsob} is finite, otherwise the statement is trivially true; by Lemma \ref{lem:approximation} we can also assume that $\mu$ has compact support, is absolutely continuous w.r.t.\ $\mm$ and its density $\rho$ is smooth. Hence, if we take any probability measure $\nu = \sigma\mm$ with compact support and smooth density, \eqref{hyp marginals weak} is satisfied. In addition, $f^T$ and $g^T$ are also compactly supported and smooth, as they inherit the regularity of $\rho$ and $\sigma$ respectively.

After this premise, let us show that $t \mapsto \ent(\mu_t^T\,|\,\mm)$ is differentiable at $t=0$ and
\begin{equation}\label{eq:derivative in 0}
\frac{\De}{\De t}\ent(\mu_t^T\,|\,\mm)\bigg|_{t=0} = \int\nabla\log\rho \cdot v_0^T\,\De\mu.
\end{equation}
To this aim, let us recall that from \cite[Lemma 3.7]{GLRT19} for every $\delta>0$ the map $t \mapsto \ent(\mu_t^{T,\delta}\,|\,\mm)$ belongs to $C([0,1]) \cap C^2((0,1))$ and it holds
\begin{equation}
\label{eq:derivative delta}
\frac{\De}{\De t}\ent(\mu_t^{T,\delta}\,|\,\mm) = \int\nabla\log\rho_t^{T,\delta} \cdot v_t^{T,\delta}\,\De\mu_t^{T,\delta},
\end{equation}
where $\rho_t^{T,\delta}$, $\mu_t^{T,\delta}$ and $v_t^{T,\delta}$ are defined as in \eqref{eq:delta definitions}. If we integrate \eqref{eq:derivative delta} on $[0,t]$ with $t \leq 1$ we obtain
\begin{equation}\label{eq:derivative integrated version}
\ent(\mu_t^{T,\delta}\,|\,\mm) - \ent(\mu_0^{T,\delta}\,|\,\mm) = \int_0^t\int \nabla\log\rho_s^{T,\delta} \cdot v_s^{T,\delta}\,\De\mu_s^{T,\delta}\De s
\end{equation}
and by the dominated convergence theorem it is easy to see that the left-hand side converges to $\ent(\mu_t^T\,|\,\mm) - \ent(\mu\,|\,\mm)$ as $\delta \downarrow 0$. As regards the right-hand one, to prove that
\[
\lim_{\delta \downarrow 0} \int_0^t\int \nabla\log\rho_s^{T,\delta} \cdot v_s^{T,\delta}\,\De\mu_s^{T,\delta}\De s = \int_0^t\int \nabla\log\rho_s^T \cdot v_s^T\,\De\mu_s^T\De s
\]
we borrow an argument used in \cite[Lemma 3.11]{GLRT19} and we report it here for reader's sake. By the very definition of $v_t^{T,\delta}$ and since
\[
T\log\rho_t^{T,\delta} = T\log(\sfP_{Tt}f^T + \delta) + T\log(\sfP_{T(1-t)}g^T + \delta),
\]
the desired conclusion is achieved if we are able to prove that
\begin{subequations}
\begin{align}
\label{eq:varphi}
& \lim_{\delta \downarrow 0} \iint_0^t |\nabla\log(\sfP_{Ts}f^T + \delta)|^2\, \rho_s^{T,\delta}\De s\De\mm = \iint_0^t |\nabla\log \sfP_{Ts}f^T|^2\,\rho_s^T\De s\De\mm, \\
\label{eq:psi}
& \lim_{\delta \downarrow 0} \iint_0^t |\nabla\log(\sfP_{T(1-s)}g^T + \delta)|^2\, \rho_s^{T,\delta}\De s\De\mm = \iint_0^t |\nabla\log \sfP_{T(1-s)}g^T|^2\,\rho_s^T\De s\De\mm.
\end{align}
\end{subequations}
To this aim, notice that $\rho_t^{T,\delta} = \rho_t^T + \delta \sfP_{Tt}f^T + \delta \sfP_{T(1-t)}g^T + \delta^2$, whence using either $\sfP_{Tt}f^T + \delta \geq \sfP_{Tt}f^T$ or $\sfP_{Tt}f^T + \delta \geq \delta$ it is easy to infer that
\[
\begin{split}
|\nabla\log(\sfP_{Tt}f^T + \delta)|^2\rho_t^T & = T^2\frac{|\nabla \sfP_{Tt}f^T|^2}{(\sfP_{Tt}f^T + \delta)^2}\rho_t^T \leq T^2\frac{|\nabla \sfP_{Tt}f^T|^2}{(\sfP_{Tt}f^T)^2}\rho_t^T = |\nabla\log \sfP_{Tt}f^T|^2\rho_t^T, \\
\delta|\nabla\log(\sfP_{Tt}f^T + \delta)|^2 \sfP_{Tt}f^T & = \delta T^2\frac{|\nabla \sfP_{Tt}f^T|^2}{(\sfP_{Tt}f^T + \delta)^2} \sfP_{Tt}f^T \leq T^2 |\nabla \sfP_{Tt}f^T|^2, \\
\delta|\nabla\log(\sfP_{Tt}f^T + \delta)|^2 \sfP_{T(1-t)}g^T & = \delta T^2\frac{|\nabla \sfP_{Tt}f^T|^2}{(\sfP_{Tt}f^T + \delta)^2} \sfP_{T(1-t)}g^T \leq T^2\frac{|\nabla \sfP_{Tt}f^T|^2}{\sfP_{Tt}f^T} \sfP_{T(1-t)}g^T \\ & = |\nabla\log \sfP_{Tt}f^T|^2\rho_t^T, \\
\delta^2|\nabla\log \sfP_{Tt}f^T|^2 & = \delta^2 T^2\frac{|\nabla \sfP_{Tt}f^T|^2}{(\sfP_{Tt}f^T + \delta)^2} \leq T^2|\nabla \sfP_{Tt}f^T|^2.
\end{split}
\]
All the right-hand sides above are integrable on $[0,1] \times M$ (either by \eqref{eq:finiteness} or by the Bakry-\'Emery contraction estimate \eqref{eq:bakry-emery} together with $f^T \in C^\infty_c(M)$), thus by dominated convergence \eqref{eq:varphi} follows. An analogous argument holds for \eqref{eq:psi}. Therefore we can pass to the limit as $\delta \downarrow 0$ in \eqref{eq:derivative integrated version} and get
\[
\ent(\mu_t^T\,|\,\mm) - \ent(\mu\,|\,\mm) = \int_0^t\int \nabla\log\rho_s^T \cdot v_s^T\,\De\mu_s^T\De s,
\]
whence \eqref{eq:derivative in 0}. This allows us to differentiate \eqref{eq:k-convexity} at $t=0$ and obtain
\begin{equation}\label{eq:differentiation}
e^{-\kappa T}\ent(\mu\,|\,\mm) \leq -\frac{1-e^{-\kappa T}}{\kappa T}\int \nabla\log\rho \cdot v_0^T\,\De\mu + \ent(\nu\,|\,\mm) - (1-e^{-\kappa T})\cost(\mu,\nu).
\end{equation}
We claim that
\begin{equation}\label{eq:claim3}
    \lim_{T \to \infty}\frac{1}{T}\int \nabla\log\rho \cdot v_0^T\,\De\mu = -\frac{1}{2}\int |\nabla\log\rho|^2 \,\De\mu
\end{equation}
Indeed, 
\[
v_0^T = \frac{T}{2}\nabla\log \sfP_T g^T - \frac{T}{2}\nabla\log f^T = T\nabla\log \sfP_T g^T - \frac{T}{2}\nabla\log\rho
\]
from the very definition of $v_0^T$, so that
\[
\begin{split}
\frac{1}{T}\int \nabla\log\rho\cdot v_0^T\,\De\mu & = \int\nabla\rho\cdot\nabla\log \sfP_T g^T\,\De\mm - \frac{1}{2}\int|\nabla\log\rho|^2\,\De\mm \\
& = -\int (\sfL\rho) \log \sfP_T g^T\,\De\mm - \frac{1}{2}\int|\nabla\log\rho|^2\,\De\mm,
\end{split}
\]
where $\sfL = \Delta/2 - \nabla U \cdot \nabla$ is the generator of $\sfP_t$, self-adjoint w.r.t.\ $\mm$. By \eqref{eq:two-sided bound g}, \eqref{eq:limit infinity 2} and the boundedness of $\supp(\mu)$ we infer that the first integral on the right-hand side vanishes as $T \to \infty$ by dominated convergence, whence \eqref{eq:claim3}.

From \eqref{eq:differentiation}, \eqref{eq:claim3} and Theorem \ref{thm:longtime} the logarithmic Sobolev inequality \eqref{eq:logsob} follows.
\end{proof}

\subsection{Proof of Theorem \ref{thm cost bound}}\label{subsec:cost bound}

The proof of Theorem \ref{thm cost bound} relies on Theorem \ref{thm:derivcost} and two other ingredients: the entropic Talagrand inequality put forward in \cite{Conforti17} and an ``energy-transport'' inequality relating $\cons(\mu,\nu)$ and $\cost(\mu,\nu)$. The former states that if \eqref{hyp pot} holds with $\kappa>0$, then for all $\mu,\nu$ as in \eqref{hyp marginals weakest} and for all $t \in (0,1)$
\begin{equation}
\label{eq:ent-talagrand}
\cost(\mu,\nu) \leq \frac{1}{1-\exp(-\kappa Tt)} \cH(\mu\,|\,\mm)+\frac{1}{1-\exp(-\kappa T(1-t)) }\cH(\nu\,|\,\mm).
\end{equation}
Truth to be told, as for \eqref{eq:k-convexity} also the entropic Talagrand inequality \eqref{eq:ent-talagrand} was stated in \cite{Conforti17} in the framework of compact Riemannian manifolds satisfying \eqref{hyp pot} with $\kappa>0$ and endowed with the volume measure, but since \eqref{eq:ent-talagrand} is deduced from \eqref{eq:k-convexity} and the latter has been generalized to the present framework in Section \ref{sec:preliminaries}, there is no problem in applying it.

The ``energy-transport'' inequality relating $\cons(\mu,\nu)$ and $\cost(\mu,\nu)$ is expressed in the following

\begin{lemma}[Energy-Transport inequality]\label{lem:cons cost}
Assume that \eqref{hyp pot} and \eqref{hyp marginals weak} hold. Then for all $T>0$ it holds
\begin{equation}
\label{eq:sharp}
|\cons(\mu,\nu)| \leq \frac{\kappa}{\exp(\kappa T/2)-1} \sqrt{(\cost(\mu,\nu)-\cH(\mu\,|\,\mm))(\cost(\mu,\nu)-\cH(\nu\,|\,\mm))}
\end{equation}
if $\kappa \neq 0$ and
\begin{equation}
\label{eq:rate}
|\cons(\mu,\nu)| \leq \frac{1}{T} \sqrt{(\cost(\mu,\nu)-\cH(\mu\,|\,\mm))(\cost(\mu,\nu)-\cH(\nu\,|\,\mm))}
\end{equation}
if $\kappa = 0$.
\end{lemma}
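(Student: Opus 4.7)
The starting point is the second expression for $\cons$ in \eqref{eq:energy representation}, valid for every $t\in[0,1]$ by energy conservation. Introduce the forward and backward Fisher informations
\[
A(t) := \int |\nabla\log \sfP_{Tt}f^T|^2\,\rho_t^T\,\De\mm, \qquad B(t) := \int |\nabla\log \sfP_{T(1-t)}g^T|^2\,\rho_t^T\,\De\mm.
\]
By Cauchy--Schwarz in $L^2(\rho_t^T\mm)$ applied to the integrand in \eqref{eq:energy representation},
\[
|\cons(\mu,\nu)| \leq \tfrac{1}{2}\sqrt{A(t)\,B(t)}\qquad \forall t \in [0,1],
\]
while \eqref{eq:cost representations} yields the identities $\int_0^1 A(t)\,\De t = \frac{2}{T}(\cost(\mu,\nu)-\cH(\nu|\mm))$ and $\int_0^1 B(t)\,\De t = \frac{2}{T}(\cost(\mu,\nu)-\cH(\mu|\mm))$.

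For $\kappa=0$ no curvature input is needed. Using that $\cons$ is independent of $t$, integrating the pointwise bound on $[0,1]$ and then applying Cauchy--Schwarz in $L^2(0,1)$ gives
\[
|\cons(\mu,\nu)| \leq \tfrac{1}{2}\int_0^1\!\sqrt{A(t)B(t)}\,\De t \leq \tfrac{1}{2}\Big(\!\int_0^1\!\! A\,\De t\Big)^{\!1/2}\!\Big(\!\int_0^1\!\! B\,\De t\Big)^{\!1/2} = \tfrac{1}{T}\sqrt{(\cost-\cH(\mu|\mm))(\cost-\cH(\nu|\mm))},
\]
which is \eqref{eq:rate}.

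For $\kappa\neq 0$ I evaluate the pointwise bound at $t=1/2$ and exploit the Bakry--\'Emery condition to control $A(1/2)$ and $B(1/2)$. Writing $A_\delta(t) := \int |\nabla\log(\sfP_{Tt}f^T+\delta)|^2\,\De\mu_t^{T,\delta}$, the inequality $h_f''(t)\geq -\kappa T h_f'(t)$ from Lemma \ref{lem:fg derivatives} rewrites as $A_\delta'(t)\leq -\kappa T A_\delta(t)$, so that $t\mapsto e^{\kappa T t}A_\delta(t)$ is non-increasing on $[0,1]$. Passing to the limit $\delta \downarrow 0$ along the same dominated-convergence scheme used in the preliminaries to extend \eqref{eq:k-convexity} to the present setting, this monotonicity is inherited by $A$, and symmetrically $t\mapsto e^{-\kappa T t}B(t)$ is non-decreasing. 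From $A(s)\geq A(1/2)\,e^{\kappa T(1/2-s)}$ for $s\in[0,1/2]$, integration and the identity $\int_0^{1/2} e^{\kappa T(1/2-s)}\De s = (e^{\kappa T/2}-1)/(\kappa T)$ yield
\[
A(1/2) \leq \frac{\kappa T}{e^{\kappa T/2}-1}\int_0^{1/2}\!\! A(s)\,\De s \leq \frac{2\kappa}{e^{\kappa T/2}-1}\big(\cost(\mu,\nu)-\cH(\nu|\mm)\big),
\]
the prefactor $\kappa/(e^{\kappa T/2}-1)$ being positive irrespective of the sign of $\kappa$. A symmetric argument on $[1/2,1]$ gives the same bound for $B(1/2)$ with $\mu$ in place of $\nu$; plugging both estimates into $|\cons|\leq \tfrac{1}{2}\sqrt{A(1/2) B(1/2)}$ produces \eqref{eq:sharp}.

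The main technical obstacle is the passage $\delta \downarrow 0$, since Lemma \ref{lem:fg derivatives} is stated at the level of the regularized interpolation only; this is essentially the same dominated-convergence argument already carried out when establishing \eqref{eq:k-convexity}, and may either be applied to the integrated monotonicity $e^{\kappa T s}A_\delta(s)\geq e^{\kappa T t}A_\delta(t)$ or performed at the very end of the chain of inequalities. The choice $t=1/2$ is optimal: the map $t\mapsto (e^{\kappa Tt}-1)(e^{\kappa T(1-t)}-1)$ attains its maximum at $t=1/2$, which is exactly what produces the sharp constant $\kappa/(e^{\kappa T/2}-1)$ in \eqref{eq:sharp}.
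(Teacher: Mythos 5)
Your argument is correct and follows essentially the same route as the paper: Cauchy--Schwarz applied to the representation \eqref{eq:energy representation} of the conserved energy at $t=1/2$, a Gr\"onwall/monotonicity bound on the forward and backward Fisher informations coming from Lemma \ref{lem:fg derivatives}, integration against the identities \eqref{eq:cost representations}, and a dominated-convergence passage $\delta\downarrow 0$ (which the paper performs at the very end, on the regularized marginals $\mu_0^{T,\delta},\mu_1^{T,\delta}$, exactly as in your second suggested option). The only cosmetic differences are the two-step Cauchy--Schwarz in the $\kappa=0$ case and your added remark on the optimality of $t=1/2$.
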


\begin{proof}
From the second identity in \eqref{eq:energy representation} we have
\begin{equation}
\label{eq:rough}
|\cons(\mu,\nu)| = \int_0^1|\cons(\mu,\nu)|\De t \leq \frac{1}{2}\iint_0^1 |\nabla\log \sfP_{T(1-t)}g^T| |\nabla\log \sfP_{Tt} f^T|\rho_t^T\,\De t\De\mm,
\end{equation}
so that by Cauchy-Schwarz inequality and by \eqref{eq:cost representations}, \eqref{eq:rate} follows.

As regards \eqref{eq:sharp}, using the same notations as in \eqref{eq:delta definitions}, let us first point out that, being $\rho_t^{T,\delta}$ and $v_t^{T,\delta}$ optimal for SP with marginals $\mu_0^{T,\delta}$ and $\mu_1^{T,\delta}$, we have
\begin{equation}
\label{eq:energy delta}
\cons(\mu_0^{T,\delta},\mu_1^{T,\delta}) = -\frac{1}{2}\int \nabla\log(\sfP_{T(1-t)}g^T+\delta) \cdot \nabla\log(\sfP_{Tt}f^T+\delta)\,\De\mu_t^{T,\delta},
\end{equation}
so that if we define
\[
\Phi(t) := \int |\nabla\log(\sfP_{Tt}f^T + \delta)|^2\,\De\mu_t^{T,\delta}, \qquad \Psi(t) := \int |\nabla\log(\sfP_{T(1-t)}g^T + \delta)|^2\,\De\mu_t^{T,\delta}
\]
for $\delta>0$, then by \eqref{eq:energy delta} evaluated in $t=1/2$ and by Cauchy-Schwarz inequality
\begin{equation}\label{eq:phi psi}
|\cons(\mu_0^{T,\delta},\mu_1^{T,\delta})| \leq \frac{1}{2} \sqrt{\Phi(1/2)\Psi(1/2)}.
\end{equation}
In order to estimate the right-hand side above, observe that by \eqref{eq:first derivatives} 
\[
\Phi(t) = -\frac{2}{T} h_f'(t), \qquad \Psi(t) = \frac{2}{T} h_b'(t)
\]
with $h_f$, $h_b$ defined as in \eqref{eq:forward backward}, so that by Lemma \ref{lem:fg derivatives}
\[
\Phi'(t) \leq -\kappa T\Phi(t), \qquad \Psi'(t) \geq \kappa T\Psi(t).
\]
Hence, by Gr\"onwall's lemma
\begin{subequations}
\begin{align}
\label{eq:lower control 1}
& \Phi(t) \geq \exp(\kappa T(1/2-t))\Phi(1/2), \qquad \forall t \in [0,1/2], \\
\label{eq:lower control 2}
& \Psi(t) \geq \exp(\kappa T(t-1/2))\Psi(1/2), \qquad \forall t \in [1/2,1].
\end{align}
\end{subequations}
On the one hand, by \eqref{eq:cost representations} and \eqref{eq:lower control 2}
\[
\begin{split}
\cost(\mu_0^{T,\delta},\mu_1^{T,\delta}) - \cH(\mu_0^{T,\delta}\,|\,\mm) & \geq \frac{T}{2}\int_{1/2}^1 \Psi(t)\,\De t \geq \frac{T}{2}\Psi(1/2)\int_{1/2}^1 \exp(\kappa T(t-1/2))\,\De t \\
& = \frac{1}{2\kappa}\Psi(1/2)(\exp(\kappa T/2)-1),
\end{split}
\]
while on the other hand, by \eqref{eq:lower control 1} and a completely analogous argument
\[
\cost(\mu_0^{T,\delta},\mu_1^{T,\delta}) - \cH(\mu_1^{T,\delta}\,|\,\mm) \geq \frac{1}{2\kappa}\Phi(1/2)(\exp(\kappa T/2)-1).
\]
Combining these inequalities with \eqref{eq:phi psi}, we obtain
\[
|\cons(\mu_0^{T,\delta},\mu_1^{T,\delta})| \leq \frac{\kappa}{\exp(\kappa T/2)-1} \sqrt{(\cost(\mu_0^{T,\delta},\mu_1^{T,\delta})-\cH(\mu_0^{T,\delta}\,|\,\mm))(\cost(\mu_0^{T,\delta},\mu_1^{T,\delta})-\cH(\mu_1^{T,\delta}\,|\,\mm))}
\]
and it is now sufficient to pass to the limit as $\delta \downarrow 0$ to get the conclusion. The fact that $\cH(\mu_0^{T,\delta}\,|\,\mm) \to \cH(\mu\,|\,\mm)$ and $\cH(\mu_1^{T,\delta}\,|\,\mm) \to \cH(\nu\,|\,\mm)$ is motivated by dominated convergence; the same is true for $\cons(\mu_0^{T,\delta},\mu_1^{T,\delta}) \to \cons(\mu,\nu)$ taking \eqref{eq:energy delta} into account.
\end{proof}

\begin{remark}
{\rm 
Let us point out that the bound \eqref{eq:rate} is better than the one obtained from \eqref{eq:sharp} via a Taylor expansion around $\kappa = 0$, so that \eqref{eq:sharp} is not sharp and the reader might wonder whether \eqref{eq:sharp} could be improved by replacing $\exp(\kappa T/2)$ with $\exp(\kappa T)$. A posteriori, this is not possible because if \eqref{eq:sharp} held with $\exp(\alpha\kappa T)$ instead of $\exp(\kappa T/2)$ for some $\alpha > 1/2$, then this would contradict the sharpness of \eqref{long time conv bound} as stated in Theorem \ref{thm cost bound}.
}\fr
\end{remark}

We are now in the position to prove Theorem \ref{thm cost bound}.

\begin{proof}[Proof of Theorem \ref{thm cost bound}]
From Theorem \ref{thm:derivcost} and Theorem \ref{thm:longtime} we have that
\begin{equation}
\label{eq:vedo la luce}
|\cost(\mu,\nu) - \cH(\mu\,|\,\mm) - \cH(\nu\,|\,\mm)| = \Big|\int_T^\infty \mathscr{E}_S(\mu,\nu)\,\De S\Big| \leq \int_T^\infty |\mathscr{E}_S(\mu,\nu)|\,\De S
\end{equation}
and by Lemma \ref{lem:cons cost} the right-hand side above is controlled as follows
\begin{equation}
\label{eq:jeremy}
\int_T^\infty |\mathscr{E}_S(\mu,\nu)|\,\De S \leq \kappa \int_T^\infty \frac{\sqrt{\mathscr{C}_S(\mu,\nu)-\cH(\mu\,|\,\mm)}\sqrt{\mathscr{C}_S(\mu,\nu)-\cH(\nu\,|\,\mm)}}{\exp(\kappa S/2)-1}\, \De S.  
\end{equation}
By the entropic Talagrand inequality \eqref{eq:ent-talagrand} for $t=1/2$ and by \eqref{hyp pot} with $\kappa>0$, which implies $\cH(\cdot\,|\,\mm) \geq 0$, it holds
\[
\begin{split}
\mathscr{C}_S(\mu,\nu) - \cH(\mu\,|\,\mm) & \leq \exp(\kappa S/2)\frac{\exp(-\kappa S/2)\cH(\mu\,|\,\mm) + \cH(\nu\,|\,\mm)}{\exp(\kappa S/2)-1} \\
& \leq \exp(\kappa S/2)\frac{\exp(-\kappa T/2)\cH(\mu\,|\,\mm) + \cH(\nu\,|\,\mm)}{\exp(\kappa S/2)-1}
\end{split}
\]
for all $S \geq T$ and an analogous inequality holds with $\mu$ and $\nu$ swapped. Therefore the right-hand side in \eqref{eq:jeremy} is bounded from above by
\[
\kappa \sqrt{\exp(-\kappa T/2) \cH(\mu\,|\,\mm) + \cH(\nu\,|\,\mm)} \sqrt{\exp(-\kappa T/2) \cH(\nu\,|\,\mm) + \cH(\mu\,|\,\mm)} \int_T^\infty\frac{\exp(\kappa S/2)}{(\exp(\kappa S/2)-1)^2} \De S.
\]
The bound \eqref{eq:stronger bound 1} follows by calculating explicitly the integral term and this, in turn, trivially implies \eqref{long time conv bound}. 

For the sharpness of \eqref{long time conv bound}, we shall prove that there exists a triplet $(M',\sfd_g',\mm')$ satisfying \eqref{hyp pot} with $\kappa>0$ and $\mu,\nu \in \cP(M')$ satisfying \eqref{hyp marginals weak} such that
\begin{equation}
\label{eq:contradiction}
\lim_{T \to \infty}\frac{1}{T}\log|\cost(\mu,\nu) - \cH(\mu\,|\,\mm) - \cH(\nu\,|\,\mm)| \geq -\frac{\kappa}{2}.
\end{equation}
To this aim, consider $M' = \mathbb{R}$, $\sfd_g'$ the Euclidean distance and choose $U(x) := \kappa x^2/4$, so that $\mathrm{Hess}(2U) = \kappa$, $\mm'$ is the Gaussian measure
\begin{equation}
\label{eq:gaussian}
\mm'(\De x) = \sqrt{\frac{\kappa}{2\pi}}\exp\Big(-\frac{\kappa}{2}x^2\Big)\De x
\end{equation}
and the stochastic process associated to the SDE \eqref{SDE} is the stationary Ornstein-Uhlenbeck process, whence by well-known results (see e.g.\ \cite{BakryGentilLedoux14}) its transition probabilities w.r.t.\ $\mm'$ admit the following explicit representation
\begin{equation}
\label{eq:kernel}
\hp_t(x,y) = \frac{1}{\sqrt{1-\exp(-\kappa t)}}\exp\Big(-\frac{\kappa(x^2 - 2\exp(\kappa t/2)xy + y^2)}{2(\exp(\kappa t)-1)}\Big).
\end{equation}
With this said, choose $\mu,\nu \in \cP(\mathbb{R})$ with Lipschitz densities and compact supports with the further condition that
\[
\int x\,\mu(\De x) = -\int y\,\nu(\De y) = 1 \qquad \textrm{and} \qquad \int x^2\,\mu(\De x) = \int y^2\,\nu(\De y) = \sigma^2
\]
for some $\sigma > 0$. From the very definition of the entropic cost \eqref{cost def} and the identity $R_{0T}(\De x\De y) = \hp_T(x,y)\mm(\De x)\mm(\De y)$ we have
\[
\cost(\mu,\nu) = \inf_{\gamma \in \Pi(\mu,\nu)}\bigg\{ \cH(\gamma\,|\,\mm'\otimes\mm') - \int \log\hp_T\,\De\gamma \bigg\}
\]
and since $\int x^2\,\mu(\De x) = \int y^2\,\nu(\De y) = \sigma^2$ by construction, if we take also \eqref{eq:kernel} into account, then the above expression can be rewritten as
\[
\begin{split}
\cost(\mu,\nu) & = \inf_{\gamma \in \Pi(\mu,\nu)}\bigg\{ \cH(\gamma\,|\,\mm'\otimes\mm') - \frac{\kappa\exp(-\kappa T/2)}{2(1-\exp(-\kappa T))}\int xy\,\gamma(\De x\De y) \bigg\} \\
& \qquad + \underbrace{\frac{\kappa\sigma^2}{\exp(\kappa T)-1} + \frac{1}{2}\log(1-\exp(-\kappa T))}_{:= \beta(T)}.
\end{split}
\]
Now observe that there exists a unique optimal $\gamma^T \in \Pi(\mu,\nu)$ in the minimization problem above, as this is nothing but a rephrasing of \eqref{cost def}, and $\cH(\gamma^T\,|\,\mm' \otimes \mm') \geq \cH(\mu\,|\,\mm') + \cH(\nu\,|\,\mm')$, because $\mu \otimes \nu$ is the unique minimizer in \eqref{decoupledSP}. Moreover, by Lemma \ref{lem:gamma convergence} and Theorem \ref{thm:longtime} we know that $\gamma^T \rightharpoonup \mu \otimes \nu$ as $T \to \infty$ and, together with the fact that $\mu,\nu$ have compact support (whence a fortiori $\gamma^T$ in $\mathbb{R} \times \mathbb{R}$ too), this implies that
\[
\lim_{T \to \infty}\int xy\,\gamma^T(\De x\De y) = \int x\,\mu(\De x) \int y\,\nu(\De y),
\]
so that, thanks to the fact that $\int x\,\mu(\De x) = 1$ and $\int y\,\nu(\De y) = -1$, it holds $\int xy\,\gamma^T(\De x\De y) \leq -1/2$ for $T$ large enough. Finally, note that there exists $C>0$ such that $|\beta(T)| \leq C\exp(-\kappa T)$ for all $T$ sufficiently large. Combining all these ingredients, we deduce that
\[
\cost(\mu,\nu) - \cH(\mu\,|\,\mm') - \cH(\nu\,|\,\mm') \geq \frac{\kappa\exp(-\kappa T/2)}{4(1-\exp(-\kappa T))} - C\exp(-\kappa T)
\]
and it is now sufficient to remark that the right-hand side is asymptotically positive, so that for $T$ large enough we are allowed to apply the logarithm to both sides of the inequality above, whence \eqref{eq:contradiction}.

As regards \eqref{eq:long time energy bound}, it is sufficient to estimate the right-hand side in \eqref{eq:sharp} by following the same reasoning as above. To prove that also \eqref{eq:long time energy bound} is sharp, it is easy to see that if there exists $\alpha > 1/2$ such that
\[
\lim_{T \to \infty}\frac{1}{T}\log|\cons(\mu,\nu)| \leq -\alpha\kappa
\]
holds for all $\mu,\nu$ satisfying \eqref{hyp marginals weak}, then for any such $\mu,\nu$ there exist $C,T_0>0$ sufficiently large such that
\[
|\cons(\mu,\nu)| \leq C\exp(-\alpha\kappa T), \qquad \forall T \geq T_0
\]
and if we plug this inequality into \eqref{eq:vedo la luce}, instead of Lemma \ref{lem:cons cost}, then we get $|\cost(\mu,\nu) - \cH(\mu\,|\,\mm) - \cH(\nu\,|\,\mm)| \leq C\exp(-\alpha\kappa T)$, whence
\[
\lim_{T \to \infty}\frac{1}{T}\log|\cost(\mu,\nu) - \cH(\mu\,|\,\mm) - \cH(\nu\,|\,\mm)| \leq -\alpha\kappa
\]
and this clearly contradicts the sharpness of \eqref{long time conv bound}.
\end{proof}

\subsection{Proof of Theorem \ref{thm meanfieldlongtime}}

The proof relies on the results of \cite{backhoff2019mean} and we shall borrow some notation from the above mentioned work. In order to keep the size of the present work under control, we prefer not to reintroduce all the notation, but rather to give precise references to the place where this is done in \cite{backhoff2019mean}. In particular, Theorem 1.3 therein ensures that to any optimizer $\rmP$ for \eqref{nonlinearSP} there is an associated stochastic process $(\Psi_t(X_t))_{t\in[0,T]}$ such that 
\be
\label{eq:mart correct}
\Psi_t(X_t) - \int_{0}^t \tilde{\bbE}_{\tilde{\rmP}} \left[ \nabla^2 W(X_s-\tilde{X}_s)\cdot(\Psi_s(X_s) - \Psi_s(\tilde{X}_s) )\right] \, \De s
\ee
is a continuous martingale under $\rmP$ on $[0,T[$, where $(\tilde{X}_t)_{t\in[0,T]}$ is an independent copy of $(X_t)_{t\in[0,T]}$ and $\tilde{\bbE}_{\tilde{\rmP}}$ the corresponding expectation. In \cite[Lemma 4.5]{backhoff2019mean} it is proven that if $\rmP$ is an optimizer for \eqref{nonlinearSP}, then its \emph{time reversal} $\hat{\rmP}$, i.e. the law of $(X_{T-t})_{t\in[0,T]}$, is optimal for
\be
\label{time rev MFSP}
\inf \Big\{\cH(\rmQ\,|\,\Gamma(\rmQ) ) \,:\,  \rmQ \in \cP_{1}(\Omega),\,  \rmQ_0=\nu,\, \rmQ_T=\mu \Big\}.
\ee
The optimality of $\hat{\rmP}$ implies the existence of an associated process $\hat\Psi$ as in \eqref{eq:mart correct}, explicitly described by the relation
\begin{equation}
\label{eq:time reversal}
\hat\Psi_t(X_t) = -\Psi_{T-t}(X_t) + \nabla\log\rmP_{T-t}(X_t) + 2\nabla W\ast \rmP_{T-t}(X_t), \quad \hat{\rmP}\textrm{-a.s.}
\end{equation}
for all $t \in [0,T]$, see \cite[Eq. 92]{backhoff2019mean} and also \cite{follmer1986time}.

After this premise, the first ingredient needed for the proof of Theorem \ref{thm meanfieldlongtime} is the following

\begin{lemma}
Under the same assumptions as in Theorem \ref{thm meanfieldlongtime}, let $\rmP$ be optimal for \eqref{nonlinearSP}. Then
\bes
\mfcost(\mu,\nu) = \cF(\mu) +\cF(\nu) + \frac{1}{2}\bbE_{\rmP}\bigg[\int_0^{T/2}|\Psi_t(X_t)|^2\,\De t\bigg] + \frac{1}{2} \bbE_{\hat{\rmP}} \bigg[\int_0^{T/2}|\hat{\Psi}_t(X_t)|^2\,\De t\bigg] - \cF(\rmP_{T/2})
\ees
\end{lemma}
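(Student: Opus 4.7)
The plan rests on three ingredients: (i) a Girsanov-type energy representation of the entropy $\cH(\rmP\,|\,\Gamma(\rmP))$ in terms of $\Psi$; (ii) the time-reversal formula \eqref{eq:time reversal}; and (iii) a chain-rule computation for $\cF$ along the Fokker-Planck flow of $(\rmP_t)$. The idea is to split the time integral at $T/2$, re-express the second half via time reversal, and absorb the resulting cross-terms into the boundary contribution $\cF(\nu)-\cF(\rmP_{T/2})$.

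\textbf{Step 1 (global energy identity).} By the martingale characterization \eqref{eq:mart correct} of the optimizer together with Girsanov's theorem (compare \cite[Thm.~1.3]{backhoff2019mean}), the process $\Psi_t(X_t)$ coincides with the control drift of $\rmP$ with respect to $\Gamma(\rmP)$, yielding
\[
\cH(\rmP\,|\,\Gamma(\rmP)) = \tfrac{1}{2}\bbE_{\rmP}\Big[\int_0^T|\Psi_t(X_t)|^2\,\De t\Big],
\]
so that $\mfcost(\mu,\nu) = \cF(\mu) + \tfrac{1}{2}\bbE_{\rmP}[\int_0^T|\Psi_t|^2\,\De t]$.

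\textbf{Step 2 (split and reverse).} Write $\int_0^T=\int_0^{T/2}+\int_{T/2}^T$. For the second integral, use the change of variable $s=T-t$ and the identity $\bbE_{\rmP}[F(X_{T-s})]=\bbE_{\hat\rmP}[F(X_s)]$ (which follows from $\hat\rmP$ being the pushforward of $\rmP$ under $\omega\mapsto\omega(T-\cdot)$) to obtain
\[
\tfrac{1}{2}\bbE_{\rmP}\Big[\int_{T/2}^T|\Psi_t(X_t)|^2\,\De t\Big] = \tfrac{1}{2}\bbE_{\hat\rmP}\Big[\int_0^{T/2}|\Psi_{T-s}(X_s)|^2\,\De s\Big].
\]
Set $\phi_s := \nabla\log\hat\rmP_s + 2\nabla W\ast\hat\rmP_s$. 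Applying \eqref{eq:time reversal} and noting $\hat\rmP_s=\rmP_{T-s}$, we get $\Psi_{T-s}(X_s)=-\hat\Psi_s(X_s)+\phi_s(X_s)$ $\hat\rmP$-a.s., and hence
\[
|\Psi_{T-s}|^2 = |\hat\Psi_s|^2 + \bigl(|\phi_s|^2 - 2\hat\Psi_s\cdot\phi_s\bigr).
\]

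\textbf{Step 3 (key chain-rule identity).} The crucial claim is
\[
\tfrac{1}{2}\bbE_{\hat\rmP}\Big[\int_0^{T/2}\bigl(|\phi_s|^2-2\hat\Psi_s\cdot\phi_s\bigr)\De s\Big] = \cF(\nu)-\cF(\rmP_{T/2}).
\]
To prove it, observe that the functional derivative satisfies $\nabla(\delta\cF/\delta\sigma)_{|\sigma=\hat\rmP_s} = \phi_s$, and that $(\hat\rmP_s)$ solves the Fokker-Planck equation with drift $\hat\Psi_s-\nabla W\ast\hat\rmP_s$ (obtained by time-reversing the SDE of $\rmP$ via Föllmer's formula and using $\Psi_t = b_t + \nabla W\ast\rmP_t$). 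A direct chain-rule computation, with one integration by parts on the Laplacian term and one on the transport term, produces
\[
\frac{\De}{\De s}\cF(\hat\rmP_s) = \bbE_{\hat\rmP_s}\bigl[\hat\Psi_s\cdot\phi_s-\tfrac{1}{2}|\phi_s|^2\bigr],
\]
whose integration on $[0,T/2]$ (using $\hat\rmP_0=\nu$, $\hat\rmP_{T/2}=\rmP_{T/2}$) yields the claim.

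\textbf{Step 4 (conclusion).} Combining Steps 1-3 gives
\[
\mfcost(\mu,\nu) = \cF(\mu) + \tfrac{1}{2}\bbE_{\rmP}\Big[\int_0^{T/2}|\Psi_t|^2\De t\Big] + \tfrac{1}{2}\bbE_{\hat\rmP}\Big[\int_0^{T/2}|\hat\Psi_s|^2\De s\Big] + \cF(\nu)-\cF(\rmP_{T/2}),
\]
which is the asserted identity.

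\textbf{Main obstacle.} The hard part is justifying the chain-rule computation of Step 3: it requires enough spatial regularity and decay of the marginals $\rmP_t$ (in particular the existence and integrability of $\nabla\log\rmP_t$ against $\rmP_t$) and of $\Psi_t$, to perform the integrations by parts and to apply Föllmer's time-reversal. Under \eqref{boundedhess}--\eqref{same mean ass}, these regularity properties are supplied by the analysis of MFSP optimizers in \cite{backhoff2019mean}; the identification of $\Psi_t$ with the control drift $b_t+\nabla W\ast\rmP_t$ is likewise the content of their Euler-Lagrange characterization, and is what permits the Fokker-Planck equation for $(\rmP_t)$ to be rewritten in terms of $\Psi_t$.
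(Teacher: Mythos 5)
Your argument is correct and follows essentially the same route as the paper: the mean-field control identity $\mfcost(\mu,\nu)=\cF(\mu)+\frac12\bbE_{\rmP}[\int_0^T|\Psi_t(X_t)|^2\,\De t]$, the split of the time integral at $T/2$ combined with the time-reversal relation \eqref{eq:time reversal}, and the identification of the resulting cross term with $\cF(\nu)-\cF(\rmP_{T/2})$. The only difference is that where you sketch the Fokker--Planck chain rule for $\cF$ in Step 3, the paper simply invokes Lemma 4.4(ii) of \cite{backhoff2019mean}, which is precisely that identity with the regularity issues you flag already taken care of.
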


\begin{proof}
Using the time-reversal relation \eqref{eq:time reversal} we deduce that 
\[
\begin{split}
\frac{1}{2}\bbE_{\hat{\rmP}} \left[\int_{0}^{T/2} |\hat{\Psi}_t(X_t)|^2\,\De t\right] & = \frac{1}{2}\bbE_{\hat{\rmP}} \left[\int_{0}^{T/2} |\Psi_{T-t}(X_t)|^2\,\De t\right] \\
& \quad -\bbE_{\hat{\rmP}} \left[ \int_{0}^{T/2} (\Psi_{T-t}-\frac{1}{2}\chi_{T-t})\cdot \chi_{T-t}(X_t)\,\De t\right]
\end{split}
\]
where 
\bes
\chi_t := \nabla \log \rmP_t + 2\nabla W\ast \rmP_t .
\ees
Using the change of variable $s=T-t$ and the very definition of time reversal, we rewrite the right-hand side as
\bes
\frac{1}{2}\bbE_{\rmP} \left[\int_{T/2}^{T} |\Psi_{s}(X_s)|^2\,\De s\right]
-\bbE_{\rmP} \left[ \int_{T/2}^{T} (\Psi_{s}-\frac{1}{2}\chi_{s})\cdot \chi_{s})(X_s)\,\De s\right]
\ees
and using \cite[Lemma 4.4 (ii)]{backhoff2019mean} we find that the second member in the above expression is worth $-\cF(\nu)+\cF(\rmP_{T/2})$. Because of the mean field control formulation \cite[Lemma 1.1 and Theorem 1.3]{backhoff2019mean} of \ref{nonlinearSP} we have 
\bes
\mfcost(\mu,\nu)=\cF(\mu)+ \frac{1}{2}\bbE_{\rmP}\bigg[\int_0^{T}|\Psi_t(X_t)|^2 \De t\bigg],
\ees
so that the conclusion is easily obtained.
\end{proof}

As for the proof of Theorem \ref{thm cost bound}, also in this case we rely on an entropic Talagrand inequality. In the mean field setting (and taking into account Remark \ref{rem:factor2}) this reads as follows
\begin{equation}
\label{eq:mf-talagrand}
\mfcost(\mu,\nu) \leq \frac{1}{1-\exp(-\kappa Tt)} \cF(\mu) + \frac{1}{1-\exp(-\kappa T(1-t)) }\cF(\nu)
\end{equation}
for all $t \in (0,1)$ and $\mu,\nu$ satisfying \eqref{same mean ass}, see \cite[Corollary 1.3]{backhoff2019mean}.

We are now in position to prove Theorem \ref{thm meanfieldlongtime}.

\begin{proof}
From the previous lemma we get that 
\be
\label{eq:upperlowermfbound}
- \cF(\rmP_{T/2}) \leq \mfcost(\mu,\nu) - \cF(\mu) - \cF(\nu) \leq \frac{1}{2}\bbE_{\rmP} \bigg[\int_0^{T/2}|\Psi_t(X_t)|^2\,\De t\bigg] + \frac{1}{2}\bbE_{\hat{\rmP}}\bigg[\int_0^{T/2}|\hat{\Psi}_t(X_t)|^2\,\De t\bigg]. 
\ee
Now observe that, thanks to the fact that $\hat{\rmP}$ is optimal for \eqref{time rev MFSP}, both terms on the right-hand side in \eqref{eq:upperlowermfbound} can be bounded using first \cite[Lemma 4.3 Eq. 73]{backhoff2019mean} and then the entropic Talagrand inequality \eqref{eq:mf-talagrand} for $t=1/2$, keeping in mind Remark \ref{rem:factor2} and that the mean field entropic cost defined in this paper is related to the one defined in \cite{backhoff2019mean} via the identity \eqref{eq:mfcost}. We thus obtain, after some simple calculations
\bes
\frac{1}{2}\bbE_{\rmP}\bigg[\int_0^{T/2}|\Psi_t(X_t)|^2\,\De t\bigg] + \frac{1}{2} \bbE_{\hat{\rmP}} \bigg[\int_0^{T/2}|\hat{\Psi}_t(X_t)|^2\,\De t\bigg] \leq \frac{1}{\exp(\kappa T/2)-1 }(\cF(\mu)+\cF(\nu)),
\ees
which gives the desired upper bound for $\mfcost(\mu,\nu) - \cF(\mu) - \cF(\nu)$. For the lower bound, the conclusion directly follows from \cite[Theorem 1.4]{backhoff2019mean} and Remark \ref{rem:factor2}, which allow to estimate $\cF(\rmP_{T/2})$. Putting together the results obtained for the upper and lower bound we conclude.
\end{proof}

\appendix

\section{On the sharpness of the entropic Talagrand inequality}\label{appendix}

Under \eqref{hyp pot} with $\kappa>0$ it holds $\mm \in \cP_2(M)$, so that $\mm$ satisfies \eqref{hyp marginals weakest} and it is then licit to choose $\nu = \mm$ in \eqref{eq:ent-talagrand}, which gives rise to the following version of the entropic Talagrand inequality, closer to the classical Talagrand inequality known in optimal transport:
\begin{equation}
\label{eq:ent-talagrand2}
\cost(\mu,\mm) \leq \frac{1}{1-\exp(-\kappa T)}\cH(\mu\,|\,\mm)
\end{equation}
for all $\mu$ as in \eqref{hyp marginals weakest}. In \cite{Conforti17} neither the sharpness of \eqref{eq:ent-talagrand2} nor the one of \eqref{eq:ent-talagrand} was investigated. Aim of this appendix is to remedy this lack.

\begin{theorem}\label{lem:sharpness}
Assume that \eqref{hyp pot} holds with $\kappa>0$. Then the entropic Talagrand inequality \eqref{eq:ent-talagrand2} is sharp, since there exists a triplet $(M',\sfd_g',\mm')$ satisfying \eqref{hyp pot} with $\kappa>0$ such that
\begin{equation}
\label{eq:cost ent C}
\cost(\mu,\mm) \leq C\cH(\mu\,|\,\mm), \qquad \forall \mu \in \cP(M) \textrm{ satisfying \eqref{hyp marginals weakest}}
\end{equation}
does not hold for any
\[
C < \frac{1}{1-\exp(-\kappa T)}.
\]
As a byproduct, also the entropic Talagrand inequality \eqref{eq:ent-talagrand} is sharp in the following sense: there exists a triplet $(M',\sfd_g',\mm')$ satisfying \eqref{hyp pot} with $\kappa>0$ such that
\[
\begin{split}
\cost(\mu,\nu) \leq \frac{1}{1-\exp(-\alpha Tt)}\cH(\mu\,|\,\mm) & + \frac{1}{1-\exp(-\alpha T(1-t))}\cH(\nu\,|\,\mm), \\
& \qquad \forall t \in (0,1),\forall \mu,\nu \in \cP(M) \textrm{ satisfying \eqref{hyp marginals weakest}}
\end{split}
\]
does not hold for any $\alpha > \kappa$.
\end{theorem}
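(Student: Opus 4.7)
The plan is to exhibit an explicit one-parameter family of marginals that saturates \eqref{eq:ent-talagrand2}. I would adopt the Ornstein--Uhlenbeck triplet already used in the proof of Theorem \ref{thm cost bound}: take $M' = \mathbb{R}$ with the Euclidean distance, $U(x) = \kappa x^2/4$ so that \eqref{hyp pot} holds with the same constant $\kappa > 0$, $\mm'$ is the centered Gaussian of variance $1/\kappa$ given by \eqref{eq:gaussian}, and the transition kernel $\hp_T$ has the explicit form \eqref{eq:kernel}. For $\veps > 0$, I would take as test marginal $\mu_\veps := N(\veps, 1/\kappa)$, the translate of $\mm'$ by $\veps$; this belongs to $\cP_2(\mathbb{R})$ and a standard Gaussian computation yields $\cH(\mu_\veps\,|\,\mm') = \kappa\veps^2/2$.

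\textbf{Solving the Schr\"odinger system explicitly.} The key step is to solve the system \eqref{eq:schr system} for the pair of marginals $(\mu_\veps, \mm')$ using the purely exponential ansatz $f^T(x) = \alpha e^{\beta x}$, $g^T(y) = \gamma e^{\delta y}$ with positive normalizations $\alpha, \gamma$. Because the OU semigroup sends exponentials to exponentials,
\[
\sfP_T(e^{\delta\,\cdot})(x) = \exp\Big(\delta x e^{-\kappa T/2} + \frac{\delta^2(1-e^{-\kappa T})}{2\kappa}\Big),
\]
both identities in \eqref{eq:schr system} reduce to polynomial equations in $x$ (resp.\ $y$); the quadratic coefficients match automatically since $\mu_\veps$ and $\mm'$ share the variance $1/\kappa$, and matching the linear and constant coefficients determines the parameters uniquely as
\[
\beta = \frac{\kappa\veps}{1-e^{-\kappa T}}, \qquad \delta = -\beta e^{-\kappa T/2}, \qquad \log(\alpha\gamma) = -\frac{\beta^2(1-e^{-\kappa T})}{2\kappa}.
\]

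\textbf{Saturation and byproduct.} By construction, $\pi := f^T \otimes g^T R_{0,T}$ is a probability measure with marginals $\mu_\veps$ and $\mm'$, hence admissible in \eqref{cost def}. Using $\int x\,\mu_\veps(\De x) = \veps$ and $\int y\,\mm'(\De y) = 0$, its relative entropy evaluates to
\[
\cH(\pi\,|\,R_{0,T}) = \int \log f^T\,\De\mu_\veps + \int \log g^T\,\De\mm' = \log(\alpha\gamma) + \beta\veps = \frac{\kappa\veps^2}{2(1-e^{-\kappa T})},
\]
so $\cost(\mu_\veps,\mm') \leq \cH(\mu_\veps\,|\,\mm')/(1-e^{-\kappa T})$; together with the reverse inequality \eqref{eq:ent-talagrand2} this forces equality for every $\veps > 0$, ruling out any smaller constant in \eqref{eq:cost ent C}. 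For the byproduct, if the general inequality \eqref{eq:ent-talagrand} held in this same triplet with some $\alpha > \kappa$ for every $t \in (0,1)$ and every admissible $\mu,\nu$, then picking $\nu = \mm'$ and letting $t \uparrow 1$ would give $\cost(\mu,\mm') \leq \cH(\mu\,|\,\mm')/(1-e^{-\alpha T})$, and since $1/(1-e^{-\alpha T}) < 1/(1-e^{-\kappa T})$ this would contradict the sharpness just proved. The only delicate point is the algebraic self-consistency of the exponential ansatz in both equations of \eqref{eq:schr system} simultaneously, but this reduces to an explicit $2 \times 2$ linear system whose solvability is made automatic by the equal-variance structure of $\mu_\veps$ and $\mm'$.
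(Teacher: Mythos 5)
Your construction is essentially sound and the computations check out: the exponential ansatz does solve \eqref{eq:schr system} for the pair $(\mu_\veps,\mm')$, the two constant-coefficient equations are mutually consistent, and the values $\cH(\mu_\veps\,|\,\mm')=\kappa\veps^2/2$ and $\cH(\pi\,|\,R_{0,T})=\kappa\veps^2/\bigl(2(1-e^{-\kappa T})\bigr)$ are correct. However, the step where you conclude saturation is wrong as written. Admissibility of $\pi$ only gives the upper bound $\cost(\mu_\veps,\mm')\leq \cH(\pi\,|\,R_{0,T})$, and \eqref{eq:ent-talagrand2} is an upper bound on the cost in the \emph{same} direction, not a ``reverse inequality''; two upper bounds cannot force equality. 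What you need is a matching lower bound on the cost, and you already hold the ingredient for it: since $(f^T,g^T)$ solves the Schr\"odinger system \eqref{eq:schr system}, the coupling $\pi=f^T\otimes g^T R_{0,T}$ is not merely admissible but \emph{optimal} in \eqref{cost def} (this is the equivalence recalled in Section \ref{sec:preliminaries}, valid under \eqref{hyp pot}, \eqref{hyp ref} and \eqref{hyp marginals weakest}, all satisfied here since $\kappa>0$ and the marginals are Gaussian). Hence $\cost(\mu_\veps,\mm')=\cH(\pi\,|\,R_{0,T})=\frac{1}{1-e^{-\kappa T}}\cH(\mu_\veps\,|\,\mm')$ holds with equality, and since $\cH(\mu_\veps\,|\,\mm')>0$ this rules out any $C<(1-e^{-\kappa T})^{-1}$ in \eqref{eq:cost ent C}. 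The byproduct argument (take $\nu=\mm'$ and let $t\uparrow 1$) coincides with the paper's and is fine.

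Once repaired, your route is genuinely different from the paper's. The paper argues on the dual side: using \eqref{eq:dual representation} and the variational representation of the entropy it shows that \eqref{eq:cost ent C} is equivalent to the exponential-moment inequality \eqref{eq:exp exp}, and then tests the latter with linear functions $\phi(x)=\alpha x$ on the same Ornstein--Uhlenbeck triplet. You work on the primal side, exhibiting the explicit $(f,g)$-decomposition for translated Gaussian marginals. Your version actually yields slightly more, namely exact attainment of \eqref{eq:ent-talagrand2} along a whole family of marginals rather than just the impossibility of a smaller constant, at the price of having to justify that the exponential ansatz produces the true optimizer --- which is precisely the point your write-up currently glosses over.
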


\begin{proof}
As a first step, we shall prove that \eqref{eq:cost ent C} is equivalent to
\begin{equation}
\label{eq:exp exp}
\int\exp\Big(-\frac{1}{TC}\phi\Big)\,\De\mm \leq \exp\Big(-\frac{1}{TC}\int \sfQ^T_1\phi\,\De\mm\Big), \qquad \forall \phi \in C_b(M),
\end{equation}
where $\sfQ_t^T\phi := -T\log\sfP_{Tt}(\exp(-\phi/T))$. To this aim, notice that by \eqref{eq:dual representation} and the symmetry of the entropic cost, i.e.\ $\cost(\mu,\mm) = \cost(\mm,\mu)$, \eqref{eq:cost ent C} is equivalent to
\begin{equation}
\label{eq:kill bill}
\int\sfQ_1^T\phi\,\De\mm - \int\phi\,\De\mu - TC\cH(\mu\,|\,\mm) \leq 0
\end{equation}
for all $\mu \in \cP(M)$ satisfying \eqref{hyp marginals weakest} and for all $\phi \in C_b(M)$. Then observe that
\[
\log\int\exp(\psi)\,\De\mm = \sup\bigg\{\int\psi\,\De\mu - \cH(\mu\,|\,\mm) \,:\, \mu \in \cP(M),\, \int\psi^+\,\De\mu < \infty \bigg\},
\]
where $\psi^+ := \max\{\psi,0\}$, the identity being a byproduct of the well-known variational representation of the entropy (see \cite{GozlanLeonard10,Leonard14b}). Hence by taking the supremum over $\mu$ in \eqref{eq:kill bill}, we get the equivalent formulation
\[
\int\sfQ_1^T\phi\,\De\mm + TC\log\int\exp\Big(-\frac{1}{TC}\phi\Big) \,\De\mm \leq 0, \qquad \forall \phi \in C_b(M),
\]
which is clearly equivalent to \eqref{eq:exp exp} by algebraic manipulations.

Relying on this characterization, we are now in the position to prove the sharpness of \eqref{eq:ent-talagrand2}. As in the proof of Theorem \ref{thm cost bound}, consider the real line endowed with the Euclidean distance and choose $U(x) := \kappa x^2/4$, so that $\mathrm{Hess}(2U) = \kappa$, $\mm$ is the Gaussian measure defined in \eqref{eq:gaussian} and the stochastic process associated to the SDE \eqref{SDE} is the stationary Ornstein-Uhlenbeck process, whence the explicit representation for its transition probabilities $\hp_t(x,y)$ described in \eqref{eq:kernel}. With this said, assume that \eqref{eq:cost ent C} holds for some constant $C>0$, hence \eqref{eq:exp exp} as well. By standard approximation arguments, in \eqref{eq:exp exp} $C_b(\mathbb{R})$ can be replaced by continuous functions with at most linear growth at infinity, so that
\begin{equation}
\label{eq:sojuz}
\int\exp\Big(-\frac{1}{TC}\phi\Big)\,\De\mm \leq \exp\Big(-\frac{1}{TC}\int \sfQ^T_1\phi\,\De\mm\Big)
\end{equation}
holds for $\phi(x) := \alpha x$. On the one hand
\[
\int\exp\Big(-\frac{1}{TC}\phi\Big)\,\De\mm = \exp\Big(\frac{\alpha^2}{2\kappa T^2 C^2}\Big),
\]
while on the other hand
\[
\sfQ_1^T\phi(x) = -T\log\int\exp\Big(-\frac{\alpha}{T}y\Big) \hp_T(x,y)\,\mm(\De y) = x\alpha\exp\Big(-\frac{\kappa T}{2}\Big) - \alpha^2\frac{1-\exp(-\kappa T)}{2\kappa T},
\]
so that
\[
\exp\Big(-\frac{1}{TC}\int \sfQ^T_1\phi\,\De\mm\Big) = \exp\Big(\alpha^2\frac{1-\exp(-\kappa T)}{2\kappa T^2 C}\Big).
\]
Combining these identities with \eqref{eq:sojuz} we obtain
\[
\frac{\alpha^2}{2\kappa T^2 C^2} \leq \alpha^2\frac{1-\exp(-\kappa T)}{2\kappa T^2 C}
\]
and this inequality is satisfied if and only if $C \geq (1-\exp(-\kappa T))^{-1}$, as claimed.

The sharpness of \eqref{eq:ent-talagrand} immediately follows from the one of \eqref{eq:ent-talagrand2}. Indeed, if there exists $\alpha > \kappa$ such that
\[
\cost(\mu,\nu) \leq \frac{1}{1-\exp(-\alpha Tt)}\cH(\mu\,|\,\mm) + \frac{1}{1-\exp(-\alpha T(1-t))}\cH(\nu\,|\,\mm)
\]
holds for all $\mu,\nu \in \cP(M)$ satisfying \eqref{hyp marginals weakest}, then by choosing $\nu = \mm$ and letting $t \to 1$ we get a contradiction.
\end{proof}

\paragraph{Acknowledgements.} The second author gratefully acknowledges support by the European Union through the ERC-AdG ``RicciBounds'' for Prof.\ K.\ T.\ Sturm.

\bibliographystyle{siam}
{\small
\bibliography{biblio}}

\end{document}